\documentclass{article} % For LaTeX2e
\usepackage{iclr2027_conference,times}

\usepackage{amsmath,amsfonts,bm}

\def\eqref#1{equation~\ref{#1}}
\def\Eqref#1{Equation (\ref{#1})}
\def\1{\bm{1}}

\DeclareMathAlphabet{\mathsfit}{\encodingdefault}{\sfdefault}{m}{sl}
\SetMathAlphabet{\mathsfit}{bold}{\encodingdefault}{\sfdefault}{bx}{n}

\newcommand{\E}{\mathbb{E}}

\DeclareMathOperator{\Tr}{Tr}

\usepackage{hyperref}
\usepackage{url}

\usepackage{xcolor} 
\usepackage{amssymb}   
\usepackage{amsthm}    
\usepackage{amsmath}   
\usepackage{stmaryrd}  
\usepackage{mathrsfs}  
\usepackage{graphicx}
\usepackage{algorithm}
\usepackage{algpseudocode}
\usepackage{url}
\usepackage{float}
\title{SCMO: Stochastic Control for Optimization over Probability Measures on Infinite-Dimensional Spaces}
\usepackage{booktabs}
\usepackage{siunitx}
\usepackage{caption}
\usepackage{makecell}
\usepackage{tabularx}

\author{Hang Cheung\\
\texttt{hang.cheung7373@gmail.com} \\
\And
Jinniao Qiu \\
Department of Mathematics and Statistics \\
University of Calgary \\
Calgary, Alberta, Canada \\
\texttt{jinniao.qiu@ucalgary.ca}
}

\newtheorem{thm}{Theorem}[section]
\newtheorem{cor}[thm]{Corollary}
\newtheorem{lem}[thm]{Lemma}
\newtheorem{prop}[thm]{Proposition}
\newtheorem{exam}[thm]{Example}

\newtheorem{ass}{Assumption}[section]
\newtheorem{rmk}{Remark}[section]

\newcommand{\cP}{\mathcal{P}}

\newcommand{\cH}{\mathcal{H}}

\newcommand{\bP}{\mathbb{P}}
\newcommand{\bR}{\mathbb{R}}

\newcommand{\Law}{\mathcal L}
\newcommand{\op}{\mathrm{op}}

\iclrfinalcopy % Uncomment for camera-ready version, but NOT for submission.
\begin{document}
\maketitle
\lhead{}
\begin{abstract}
We study objective-only optimization of possibly nonconvex and nonsmooth
functionals over probability measures on a separable Hilbert space,
allowing the optimizer to be intrinsically non-Dirac. We introduce SCMO
(Stochastic Control Measure Optimizer), a gradient-free particle method
derived from entropy-regularized stochastic control. After finite-particle
and Galerkin approximations, a Cole--Hopf transform represents the optimal
feedback as a Gibbs-weighted terminal displacement. SCMO approximates this
feedback by sampling context clouds, replacing one particle at a time with
candidate draws, scoring the resulting empirical measures, and applying
exponential reweighting.

SCMO uses separate covariances for candidate proposals and particle updates,
termed matched when equal and nonmatched otherwise; our analysis covers both
settings. In the matched case, we establish
PDE-free qualitative convergence and a projection-first quantitative bound
separating particle, Galerkin-projection, and entropy-regularization errors.
For the practical
multi-context implementation with nonmatched covariance, we prove finite-candidate
consistency and show that the signed, curvature-dependent effect of
nonmatched covariance can reduce the resulting upper bound on the
approximation error. Finally, experiments on function-space, trajectory-law, and contact-rich
manipulation problems show that SCMO handles nonsmooth and nonconvex
objectives, escapes suboptimal local basins, and recovers prescribed
multimodal, non-Dirac law structure.

Code for reproducing our experimental results is available at
\url{https://github.com/HenryCHEUNG7373/SCMO}.
\end{abstract}
\section{Introduction}
\label{sec:introduction}
Many problems in machine learning and control seek a probability law over
high- or infinite-dimensional solutions rather than a single optimizer,
often under nonconvex or nonsmooth objectives; examples include multimodal
robot policies \citep{chi2023diffusionpolicy,ding2025riemannian},
function-space generative models
\citep{kerrigan2024functional,li2026functional}, and nonconvex optimization
over probability measures \citep{luu2024nongeodesic}. Motivated by these settings, we study problems of the form
\begin{equation}
    \mathcal V_0
    :=
    \inf_{\mu\in\mathcal P_2(\mathcal H)}
    G(\mu),
    \label{eq:intro-measure-problem}
\end{equation}
where \(\mathcal H\) is a separable Hilbert space and
\(G:\mathcal P_2(\mathcal H)\to\mathbb R\) may be nonconvex, nonsmooth, and
interacting. It combines three challenges: the
measure-valued decision variable is infinite-dimensional; function-space
implementations must control both particle and Galerkin-projection errors;
and gradients or Lions derivatives may be unavailable or uninformative for
simulated or discontinuous objectives. This motivates an objective-only
method that maintains a flexible empirical law and provides particle-wise
credit assignment.

This work is inspired by
\citet{qiu2026stochastic}, which connects optimization over
\(\mathcal P_2(\mathbb R^d)\) with entropy-regularized mean-field controls and \(N\)-particle approximations. 
However, that analysis
is finite-dimensional and uses the same Brownian covariance in the feedback representation and the particle dynamics. As the corresponding regularization bound scales with
the ambient dimension, the estimate is not uniform as $d\to\infty$. 
The present work nontrivially enlarges this framework to \(\mathcal P_2(\mathcal H)\) for a
separable Hilbert space \(\mathcal H\), using trace-class $Q$-Wiener noise and
Galerkin projection. It also introduces an \(R\)-context estimator that
recovers the unconditional full-cloud feedback as \(R\to\infty\) and separates
the proposal covariance used for feedback estimation from the execution
covariance used in the particle dynamics. We call it SCMO
(\emph{Stochastic Control Measure Optimizer}). Its methodological and
theoretical contributions are summarized below.

% Building on this framework, we develop SCMO
% (\emph{Stochastic Control Measure Optimizer}) for empirical laws on a
% separable, potentially infinite-dimensional Hilbert base space
% \(\mathcal H\). After Galerkin projection, SCMO approximates the feedback by
% evaluating particle-wise candidate replacements against multiple independently
% sampled context clouds and jointly normalizing the resulting scores. SCMO
% retains the stochastic-control representation and particle-wise replacement
% principle of \citet{qiu2026stochastic} and extends that framework in the
% directions summarized below.

\paragraph{Our Contributions.}
Our contributions are fivefold.
\textbf{(i) Hilbert-space formulation and Galerkin approximation:}
we address the optimization over the space of probability measures on infinite-dimensional separable Hilbert spaces, and obtain an objective-only Galerkin implementation
for function-valued laws. The resulting method requires neither particle
gradients nor Lions derivatives. The infinite-dimensionality refers to the base space on
which the measures are supported, rather than to the dimensionality of the
Wasserstein space itself.
\textbf{(ii) Matched-covariance convergence analysis:} we prove a novel PDE-free
qualitative convergence for a class of nonsmooth and discontinuous objectives.
Under stronger regularity, we derive a projection-first quantitative bound
that separates finite-particle, Galerkin-projection, and
regularization errors.
\textbf{(iii) Nonmatched-covariance convergence analysis:}
we distinguish the proposal covariance used to estimate the feedback from the
execution covariance used to update the particles. We derive a signed,
curvature-dependent nonmatched term and show that, under the stated sign
condition, it sharpens the matched-covariance upper approximation certificate.
\textbf{(iv) Multi-context finite-candidate estimation:}
we introduce an estimator using \(R\) independent complete context clouds and
\(S\) candidate replacements per context, with joint normalization over the
resulting \(RS\) scores. We establish consistency and a mean-square error
bound, and connect the practical finite-candidate implementation to the exact
Euler chain.
\textbf{(v) Empirical evaluation:}
experiments on function-space, trajectory-law, and contact-rich Push-T
problems show that SCMO handles nonsmooth, nonconvex, and discontinuous
objectives, escapes suboptimal local basins, and recovers prescribed
multimodal, non-Dirac law structure. Code for reproducing our experimental results is available at
\url{https://github.com/HenryCHEUNG7373/SCMO}.

\section{Related Work}

\paragraph{Closest precursor.}
The closest precursor is \citet{qiu2026stochastic}. Section~\ref{sec:introduction} states the precise
departures developed in the present work.

\paragraph{Entropy-regularized and Gibbs-weighted methods.}
The soft-constrained Schr\"odinger bridge of \citet{garg2024soft} replaces
exact terminal-law matching by a Kullback--Leibler penalty and yields a
geometric-mixture characterization of the optimal terminal law; this differs
from optimizing a general, potentially interacting law functional.
Path-integral methods similarly use Gibbs-weighted rollouts to update controls
\citep{theodorou2010pathintegral,williams2017information}, whereas SCMO uses
context-conditioned weights to update an empirical law. Entropy-SGD and related methods also share
SCMO's general Gibbs/log-partition structure, exponentially weighting nearby
parameter values according to their loss
\citep{chaudhari2017entropy,pittorino2021entropic}. They estimate the resulting
local-entropy gradient through an auxiliary noisy gradient process and update
a single parameter vector. SCMO instead assigns Gibbs weights to context-conditioned
one-particle replacements using the objective values of the complete empirical law
and executes objective-only particle dynamics.

\paragraph{Black-box, particle, and infinite-dimensional optimization.}
CEM, CMA-ES, and natural evolution strategies adapt a parameterized search
distribution to locate high-quality points
\citep{deboer2005crossentropy,hansen2001cma,wierstra2014nes}; when a complete
particle cloud is vectorized, each proposal receives one global score,
whereas SCMO assigns one-particle replacement scores and treats the resulting
law as the solution. SVGD, particle quantization, mean-field Langevin and
stochastic particle descent, entropic fictitious play, and Wasserstein or KL
flows instead use gradients, scores, or variational structure
\citep{liu2016svgd,xu2022quantization,chizat2018global,hu2021meanfield,
nitanda2017stochastic,chen2023entropic,yao2024klflow,
tankala2025meanfield,kook2024meanfield,luu2024nongeodesic}.
Consensus-based optimization is derivative-free but designed to concentrate
on a point minimizer \citep{pinnau2017cbo,carrillo2018cbo}; recent
Hilbert-space extensions and directional pre-basis methods likewise optimize
a point or function \citep{huang2026hilbert,khatab2026consensus,peixoto2026random}. SCMO instead
uses only evaluations of a functional on \(\mathcal P_2(\mathcal H)\) and may
return a multimodal, non-Dirac law.
\section{SCMO: Stochastic-control formulation and finite-candidate update}
Let $(\cH,\langle\cdot,\cdot\rangle,\|\cdot\|)$ be a separable Hilbert space, and let
$\mathcal P_2(\cH)$ denote the space of probability measures on $\cH$ with finite second moments,
equipped with the $2$-Wasserstein metric $\mathcal W_2$. We study the measure-valued optimization
problem $\mathcal V_0 :=
    \inf_{\mu\in\mathcal P_2(\cH)} G(\mu)$, where $G:\mathcal P_2(\cH)\to\bR$ is a possibly nonconvex and nonsmooth functional.

\subsection{Entropy-regularized measure control}
Let $Q$ be a symmetric, positive-definite, trace-class covariance operator on $\cH$. 
Write \(\cH_Q:=Q^{1/2}(\cH)\) for the associated Cameron--Martin space, i.e. the space of directions with finite \(Q\)-energy, equipped with the norm $\|h\|_Q:=\|Q^{-1/2}h\|,\,h\in\cH_Q$. Equivalently, if $\{e_j\}_{j\geq1}$ is an orthonormal eigenbasis of $Q$ with eigenvalues
$\{\lambda_j\}_{j\geq1}$, then $\cH_Q
    =
    \left\{
    h=\sum_{j\ge1}h_j e_j:
    \sum_{j\ge1}\frac{h_j^2}{\lambda_j}<\infty
    \right\}$, $\|h\|_Q^2
    =
    \sum_{j\ge1}\frac{h_j^2}{\lambda_j}$. Let $(W_t)_{t\geq0}$ be a $Q$-Wiener process on $\cH$ defined on a complete filtered probability
space $(\Omega,\mathcal F,(\mathcal F_t)_{t\geq0},\bP)$. For $t\in[0,1]$ and
$\mu\in\mathcal P_2(\cH)$, let $\xi$ be an $\mathcal F_t$-measurable $\cH$-valued random variable
with law $\mu$, independent of the future increments $(W_s-W_t)_{s\geq t}$. Let $\Theta_t$ denote the set of $\cH_Q$-valued progressively
measurable controls satisfying $\E\int_t^1 \|\theta_s\|_Q^2\,ds < \infty$. For $\theta\in\Theta_t$, define
    $X_s^{t,\xi;\theta}
    =
    \xi+\int_t^s \theta_r\,dr + W_s-W_t,
    \, s\in[t,1]$. For $\varepsilon \in (0,1)$, define the entropy-regularized value function
\begin{equation}
    \label{eq:value_function_MF_eps}
    V_\varepsilon(t,\mu)
    :=
    \inf_{\theta\in\Theta_t}
    \left\{
        G\bigl(\mathcal L(X_1^{t,\xi;\theta})\bigr)+\E\left[
            \frac{\varepsilon}{2}
            \int_t^1 \|\theta_s\|_Q^2\,ds
        \right]
    \right\},
    \qquad
    (t,\mu)\in[0,1]\times\mathcal P_2(\cH),
\end{equation}
where the quadratic term $\E\left[
            \frac{1}{2}
            \int_t^1 \|\theta_s\|_Q^2\,ds
        \right]$ by Girsanov's theorem is the relative entropy of the law of $X^{t,\xi;\theta}$ with respect to the law of the uncontrolled process $X^{t,\xi;0}$, providing the rationale for the terminology ``entropy-regularized".
In particular, for an initial law $\mu_0=\mathcal L(\xi_0)$,
\[
    V_\varepsilon(0,\mu_0)
    =
    \inf_{\theta\in\Theta_0}
    \left\{
         G\bigl(\mathcal L(X_1^{0,\xi_0;\theta})\bigr)
         +\E\left[
            \frac{\varepsilon}{2}
            \int_0^1 \|\theta_s\|_Q^2\,ds
        \right]
    \right\}.
\]

Formally applying the dynamic programming principle on $\mathcal P_2(\cH)$ yields the mean-field
Hamilton--Jacobi-Bellman (HJB) equation
\begin{equation}
    \label{eq:Master_eq_inf}
    \begin{cases}
    \displaystyle
    \partial_t V_\varepsilon(t,\mu)
    -
    \frac{1}{2\varepsilon}
    \int_{\cH}
        \|Q^{1/2}\partial_\mu V_\varepsilon(t,\mu)(x)\|^2
    \,\mu(dx)
    +
    \frac{1}{2}
    \int_{\cH}
        \operatorname{Tr}
        \left(
            Q\,\partial_x\partial_\mu V_\varepsilon(t,\mu)(x)
        \right)
    \,\mu(dx)
    =0,
    \\[1.2em]
    V_\varepsilon(1,\mu)=G(\mu).
    \end{cases}
\end{equation}
Here $\partial_\mu$ denotes the Lions derivative and $\partial_x\partial_\mu$ denotes its Fréchet
derivative in the spatial variable.
\subsection{Particle--Galerkin approximation and Cole--Hopf feedback}
\label{sec:scmo}
The formal measure-level \Eqref{eq:Master_eq_inf} is computationally intractable:
the state variable is a probability measure on an infinite-dimensional space. To tackle this issue, SCMO uses two
approximations. First, the measure is represented by an empirical particle law. Second, the Hilbert
space is projected to a finite-dimensional Galerkin subspace.

For $\mathbf{x}=(x_1,\ldots,x_N)\in\cH^N$, define the empirical measure $\mu^N_{\mathbf{x}} := \frac1N\sum_{i=1}^N \delta_{x_i}$. For $K\geq1$, set
$\cH_K := \operatorname{span}\{e_1,\ldots,e_K\}$, $Q_K := \Pi_K Q \Pi_K$, where $\Pi_K:\cH\to\cH_K$ is the orthogonal projection. Let $\iota_K:\cH_K\hookrightarrow\cH$ be the canonical embedding. For $\nu\in\cP_2(\cH_K)$, the projected terminal functional is the restriction $G_K(\nu):=G((\iota_K)_\#\nu)$. When no confusion can arise, we write simply $G(\nu)$ for $G_K(\nu)$. The Wasserstein distances used to compare with the original optimizer are computed after embedding laws into $\cH$.

For $\mathbf{x}\in(\cH_K)^N$, define the projected $N$-particle value function
\begin{equation}
    \label{eq:projected_particle_value}
    v_\varepsilon^{K,N}(t,\mathbf{x})
    :=
    \inf_{\boldsymbol{\theta}}
    \E\left[
        N G(\mu^N_{\mathbf{X}_1^K})
        +
        \frac{\varepsilon}{2}
        \sum_{i=1}^N
        \int_t^1 \|\theta_s^i\|_{Q_K}^2\,ds
    \right],
\end{equation}
where $\boldsymbol{\theta}=(\theta^1,\ldots,\theta^N)$ ranges over progressively measurable
$\cH_K$-valued controls with finite energy, and
$\mathbf{X}_s^K=(X_s^{K,1},\ldots,X_s^{K,N})$ is given by $X_s^{K,i}
    =
    x_i+\int_t^s \theta_r^i\,dr + W_s^{K,i}-W_t^{K,i},
    \,
    s\in[t,1],
    \,
    i=1,\ldots,N$.
Here $W^{K,1},\ldots,W^{K,N}$ are independent $Q_K$-Wiener processes on $\cH_K$.

When a smooth value function exists, $v_\varepsilon^{K,N}$ formally solves the finite-dimensional HJB
equation
\begin{equation}
    \label{eq:particle_hjb_galerkin}
    \begin{cases}
    \partial_t v_\varepsilon^{K,N}
    +
    \frac12
    \sum_{i=1}^N
    \operatorname{Tr}\!\left(Q_K D_{x_i}^2 v_\varepsilon^{K,N}\right)
    -
    \frac{1}{2\varepsilon}
    \sum_{i=1}^N
    \left\|Q_K^{1/2}D_{x_i}v_\varepsilon^{K,N}\right\|^2
    =
    0,\\[1.2em]
    v_\varepsilon^{K,N}(1,\mathbf{x})
    =
    N G(\mu^N_{\mathbf{x}}).
    \end{cases}
\end{equation}
This projected particle problem is the point at which the Cole--Hopf transform becomes applicable.
Note that the Cole--Hopf transform does not directly linearize the infinite-dimensional Master \Eqref{eq:Master_eq_inf}; the obstruction is explained in
Appendix~\ref{app:cole_hopf_obstruction}. Let $u_\varepsilon^{K,N}(t,\mathbf{x})
    :=
    \exp\!\left(-v_\varepsilon^{K,N}(t,\mathbf{x})/\varepsilon\right)$. Then $u_\varepsilon^{K,N}$ solves the linear heat equation associated with the $N$ independent
$Q_K$-Wiener particles, with terminal condition
$\exp(-N G(\mu^N_{\mathbf{x}})/\varepsilon)$. Equivalently, for $t<1$ and $\tau:=1-t$, Feynman--Kac formula gives
\begin{equation}
\label{Feynman Kac}
    u_\varepsilon^{K,N}(t,\mathbf{x})
    =
    \E\left[
        \exp\!\left(
            -\frac{N}{\varepsilon}
            G(\mu^N_{\mathbf{x}+\Delta\mathbf{W}_{\tau}^K})
        \right)
    \right],    
\end{equation}
where
\[
    \Delta\mathbf{W}_{\tau}^K
    =
    (\Delta W_{\tau}^{K,1},\ldots,\Delta W_{\tau}^{K,N}),
    \qquad
    \Delta W_{\tau}^{K,i}\sim\mathcal N(0,\tau Q_K)
\]
are independent. The optimal feedback drift for particle $i$ is therefore
\begin{equation}
    \label{eq:optimal_control_particle_clean}
    \theta_i^{*,K,N}(t,\mathbf{x})
    =
    -\frac1\varepsilon Q_K D_{x_i}v_\varepsilon^{K,N}(t,\mathbf{x})
    =
    \frac1{\tau}
    \frac{
        \E\left[
            \exp\!\left(
                -\frac{N}{\varepsilon}
                G(\mu^N_{\mathbf{x}+\Delta\mathbf{W}_{\tau}^K})
            \right)
            \Delta W_{\tau}^{K,i}
        \right]
    }{
        \E\left[
            \exp\!\left(
                -\frac{N}{\varepsilon}
                G(\mu^N_{\mathbf{x}+\Delta\mathbf{W}_{\tau}^K})
            \right)
        \right]
    }.
\end{equation}
Note that this expression is gradient-free: it only requires evaluations of $G$ on empirical measures. 
\subsection{Finite-candidate SCMO update}

Motivated by the projected-particle feedback
formula~\Eqref{eq:optimal_control_particle_clean}, we propose the following algorithm. Full pseudocode is given in
Appendix~\ref{app:full-algorithm}. SCMO evolves a Galerkin
coefficient cloud
\(\mathbf C=(C_1,\ldots,C_N)^\top\in\mathbb R^{N\times K}\), representing
\(u^{(i)}=\sum_{k=1}^K C_{i,k}e_k\), $i = 1,\ldots,N$. For
user-chosen scale parameters
$\sigma_{\mathrm{prop}}$, $\sigma_{\mathrm{dyn}}>0$, let
\(\Lambda=\operatorname{diag}(\lambda_1,\ldots,\lambda_K)\) and define
\(Q_{\mathrm{prop}}=\sigma_{\mathrm{prop}}^2\Lambda\) and
\(Q_{\mathrm{dyn}}=\sigma_{\mathrm{dyn}}^2\Lambda
=\alpha_0Q_{\mathrm{prop}}\), where
\(\alpha_0=\sigma_{\mathrm{dyn}}^2/\sigma_{\mathrm{prop}}^2\).
The proposal covariance $Q_{\mathrm{prop}}$ is used to estimate the feedback, whereas the
execution covariance $Q_{\mathrm{dyn}}$ is used in the Euler update. The cases
\(\alpha_0<1\) and \(\alpha_0>1\) correspond respectively to colder and
hotter execution. A nonmatched choice
is motivated empirically (see Appendix~\ref{app:unmatched-curvature-diagnostic}) and by the signed curvature contribution derived in
Appendix~\ref{app:implementation-consistency}, which can tighten the matched
approximation bound.

Initialize \(\mathbf C\leftarrow\mathbf C_{\mathrm{init}}\) and
\(\Delta t=T/M\). For each of the \(L\) outer loops and
\(m=0,\ldots,M-1\), set \(t_m=m\Delta t\) and
\(\tau_m=T-t_m\). Draw \(R\) independent complete context clouds and
\(S\) candidate replacements per context:
\begin{equation}
\label{eq:main-context-candidates}
\widetilde C_p^{(r)}
=
C_p+\sqrt{\tau_m}\,
\sigma_{\mathrm{prop}}Z_p^{(r)}\Lambda^{1/2},
\qquad
Y_{i,r,s}
=
C_i+\sqrt{\tau_m}\,
\sigma_{\mathrm{prop}}\xi_{i,r,s}\Lambda^{1/2},
\end{equation}
where $r = 1,\ldots,R$, $s = 1,\ldots, S$ and \(Z_p^{(r)},\xi_{i,r,s}\sim\mathcal N(0,I_K)\) are independent.
Here
\(\widetilde{\mathbf C}^{(r)}
=(\widetilde C_1^{(r)},\ldots,\widetilde C_N^{(r)})\)
is one Gaussian perturbation of the full current cloud. When estimating the
feedback of particle \(i\), the \(r\)-th context supplies
\(\{\widetilde C_p^{(r)}:p\neq i\}\), while \(Y_{i,r,s}\) replaces its
\(i\)-th component.

For each particle \(i\), define the replacement scores, globally normalized
Gibbs weights, and feedback estimate by
\begin{equation}
\label{eq:main-finite-candidate-update}
\begin{aligned}
\mathcal E_{i,r,s}
&=
N\,G_K\!\left(
\frac1N\delta_{Y_{i,r,s}}
+
\frac1N\sum_{p\neq i}\delta_{\widetilde C_p^{(r)}}
\right),
\qquad
\mathcal E_i^{\min}
=
\min_{r,s}\mathcal E_{i,r,s},
\\
w_{i,r,s}
&=
\frac{
\exp\!\left(
-(\mathcal E_{i,r,s}-\mathcal E_i^{\min})/\varepsilon
\right)
}{
\displaystyle
\sum_{q=1}^R\sum_{j=1}^S
\exp\!\left(
-(\mathcal E_{i,q,j}-\mathcal E_i^{\min})/\varepsilon
\right)
},
\qquad
\widehat\theta_i
=
\frac1{\tau_m}
\sum_{r=1}^R\sum_{s=1}^S
w_{i,r,s}(Y_{i,r,s}-C_i),
\end{aligned}
\end{equation}
where we are approximating the ratio $\frac{
        \E\left[\E\left[
            \exp\!\left(
                -\frac{N}{\varepsilon}
                G(\mu^N_{\mathbf{x}+\Delta\mathbf{W}_{\tau}^K})
            \right)
            \Delta W_{\tau}^{K,i}\mid\widetilde{\mathbf C}_{-i}^{(r)}
        \right]\right]
    }{
        \mathbb{E}\left[\E\left[
            \exp\!\left(
                -\frac{N}{\varepsilon}
                G(\mu^N_{\mathbf{x}+\Delta\mathbf{W}_{\tau}^K})
            \right)\mid\widetilde{\mathbf C}_{-i}^{(r)}
        \right]\right]} = \frac{
        \E\left[
            \exp\!\left(
                -\frac{N}{\varepsilon}
                G(\mu^N_{\mathbf{x}+\Delta\mathbf{W}_{\tau}^K})
            \right)
            \Delta W_{\tau}^{K,i}
        \right]
    }{
        \E\left[
            \exp\!\left(
                -\frac{N}{\varepsilon}
                G(\mu^N_{\mathbf{x}+\Delta\mathbf{W}_{\tau}^K})
            \right)
        \right]
    }$. The estimator \(\widehat\theta_i\) is the Gibbs-weighted terminal
displacement divided by the remaining time, while subtracting
\(\mathcal E_i^{\min}\) is only a numerical stabilization.

All particles are updated simultaneously. Define
\(\overline C_i=C_i+\Delta t\,\widehat\theta_i\) and
\(\overline{\mathbf C}
=(\overline C_1,\ldots,\overline C_N)^\top\), draw a fresh
\(\boldsymbol\eta_m\sim\mathcal N(0,I_{N\times K})\), independently of all
context and candidate variables, and set
\begin{equation}
\label{eq:main-execution-update}
\mathbf C
\leftarrow
\overline{\mathbf C}
+
\sqrt{\Delta t}\,
\sigma_{\mathrm{dyn}}
\boldsymbol\eta_m\Lambda^{1/2}.
\end{equation}
After all outer loops, SCMO returns
\(\mu^N=N^{-1}\sum_{i=1}^N\delta_{u^{(i)}}\). Finally, note that the frozen-background particle-wise approximation of
Qiu~\cite[Section~3.3]{qiu2026stochastic} corresponds to \(R=1\) in our
notation. This is exact for additive objectives as \(S\to\infty\), because
the background contribution cancels from the Gibbs ratio. For general
interacting objectives, \(R=1\) gives only a conditional feedback, whereas
the unconditional full-cloud feedback is recovered as \(R\to\infty\); see
Appendix~\ref{app:context-cloud-quadratic}.

% ============================================================
\section{Convergence Analysis}
\label{sec:convergence}
\raggedbottom

We first analyze the matched-covariance problem
\(Q_{\mathrm{prop}}=Q_{\mathrm{dyn}}\), because in this case the
Cole--Hopf representation yields a clean approximation theory. We give
two guarantees for the idealized projected particle problem: a
PDE-free qualitative result under a weak recovery condition and a
quantitative projection-first bound separating particle, projection,
and regularization errors. We then return to the practical
multi-context, finite-candidate implementation with nonmatched covariance. For one outer-loop rollout, we quantify the
context--candidate sampling error, the Euler discretization residual,
and the signed effect of replacing the proposal covariance by the
execution covariance.

\subsection{Qualitative convergence}

The first result uses only the finite-dimensional risk-sensitive representation. It does not require a
Master equation, Lions differentiability, or convexity.

\begin{ass}[Robust recovery]
\label{ass:robust-recovery_main}
Assume that \(G:\cP_2(\cH)\to\bR\) is Borel measurable and bounded from below, and that $-\infty<\mathcal V_0:=\inf_{\mu\in\cP_2(\cH)}G(\mu)<\infty$. For every \(\eta>0\) and every \(K_0,N_0\ge1\), assume that there exist
\(K\ge K_0\), \(N\ge N_0\), a cloud \(\mathbf z\in(\cH_K)^N\), and \(r>0\) such that $\sup_{\mathbf y\in B_r(\mathbf z)}G(\mu_{\mathbf y}^N)
    \le
    \mathcal V_0+\eta$.
\end{ass}

\begin{thm}[PDE-free convergence under robust recovery]
\label{thm:qualitative-robust-recovery}
Assume that \(Q\) is trace class with strictly positive eigenvalues and that
Assumption~\ref{ass:robust-recovery_main} holds. Let \(v_\varepsilon^{K,N}\) be the projected particle
value with terminal cost \(NG_K(\mu_{\mathbf x}^N)\). Then there exist sequences
\(K_m,N_m\to\infty\) and \(\varepsilon_m\downarrow0\) such that
\[
    \frac1{N_m}v_{\varepsilon_m}^{K_m,N_m}(0,\Pi_{K_m}\mathbf x_0)
    \longrightarrow
    \mathcal V_0 .
\]
\end{thm}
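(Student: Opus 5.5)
Fix $\mathbf x_0\in\cH^N$; for each $N$ we read $\mathbf x_0$ as a given initial cloud, and by Assumption~\ref{ass:robust-recovery_main} both $N$ and $K$ may be taken along subsequences. The plan is to squeeze $\frac1N v_\varepsilon^{K,N}(0,\Pi_K\mathbf x_0)$ between $\mathcal V_0$ and $\mathcal V_0+\eta+o(1)$, using the Cole--Hopf/Feynman--Kac representation~\eqref{Feynman Kac} for the upper bound. For each $\eta=1/m$ we will choose $K_m,N_m,\mathbf z,r$ from the assumption, and then pick $\varepsilon_m$ small enough.

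\emph{Lower bound.} The running cost in~\eqref{eq:projected_particle_value} is nonnegative, and $G(\mu^N_{\mathbf X_1^K})\ge\mathcal V_0$ pointwise, since $(\iota_K)_\#$ of an empirical measure lies in $\cP_2(\cH)$. Hence $\frac1N v_\varepsilon^{K,N}\ge\mathcal V_0$ for every choice of $K,N,\varepsilon$. No PDE or regularity is needed here; we only need $G$ bounded below and measurable, so the expectation makes sense.

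\emph{Upper bound.} We use the variational (Donsker--Varadhan/Bou\'e--Dupuis) identity
\[
v_\varepsilon^{K,N}(0,\mathbf x)=-\varepsilon\log\E\bigl[\exp(-NG(\mu^N_{\mathbf x+\mathbf W_1^K})/\varepsilon)\bigr].
\]
This holds for bounded-below measurable terminal costs in finite dimension $\cH_K^N$, and $Q_K$ is nondegenerate on $\cH_K$ because the eigenvalues are strictly positive. Two routes give the needed inequality $\le$. One is to cite the Bou\'e--Dupuis formula. The other, which is more elementary, is to exhibit a control directly: take the deterministic drift $\theta^i_s\equiv z_i-\Pi_K x_{0,i}$, which steers the mean to $\mathbf z$. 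Its cost is
\[
\frac{\varepsilon}{2}\sum_i\|z_i-\Pi_K x_{0,i}\|_{Q_K}^2=:\varepsilon\, c_{K,N},
\]
which is finite. The terminal state is then $\mathbf z+\mathbf W_1^K$. We would rather avoid that simple control, because the noise does not shrink and $G$ is controlled only on $B_r(\mathbf z)$. The fix is to use the Gibbs bound instead:
\[
\E\bigl[e^{-NG/\varepsilon}\bigr]\ge \bP\bigl(\Pi_K\mathbf x_0+\mathbf W_1^K\in B_r(\mathbf z)\bigr)\,e^{-N(\mathcal V_0+\eta)/\varepsilon}.
\]
The probability $p_{K,N,r}$ is strictly positive and independent of $\varepsilon$, because the Gaussian $\mathcal N(0,Q_K)^{\otimes N}$ has full support on the finite-dimensional space $\cH_K^N$. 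This gives
\[
\frac1N v_\varepsilon^{K,N}(0,\Pi_K\mathbf x_0)\le \mathcal V_0+\eta-\frac{\varepsilon}{N}\log p_{K,N,r}.
\]
We then choose $\varepsilon_m\le \min\bigl(1/m,\ N_m/(m\,|\log p_{K_m,N_m,r_m}|)\bigr)$ and obtain an upper bound of $\mathcal V_0+2/m$.

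\emph{Main obstacle.} The key step is rigorously justifying the log-Laplace identity, or at least the inequality $v\le-\varepsilon\log\E[\cdots]$, for merely measurable and possibly unbounded-above $G$. The appropriate tool is the Bou\'e--Dupuis variational formula on $\cH_K^N$. It applies to functions bounded below, possibly after truncating $G\wedge M$ and passing to monotone limits, with the relevant control chosen from the Gibbs tilt via Girsanov. Once that identity is in place, the rest reduces to full support of nondegenerate finite-dimensional Gaussians and the diagonal choice of $(K_m,N_m,\varepsilon_m)$.
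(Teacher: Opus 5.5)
Your argument is essentially the paper's proof. The lower bound comes from $G\ge\mathcal V_0$: you take it from the control formulation, the paper from the risk-sensitive representation. The upper bound comes from that representation together with the full-support estimate $p=\gamma_{K,N}(B_r(\mathbf z))>0$ and a diagonal choice of $(K_m,N_m,\varepsilon_m)$ with $-(\varepsilon_m/N_m)\log p\le\eta_m$.

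The paper simply takes the representation as given. If you do justify it yourself, note that truncating to $G\wedge M$ only yields the easy inequality $v_\varepsilon^{K,N}\ge-\varepsilon\log\E[\cdots]$. The inequality you actually need follows directly from a F\"ollmer-drift control with terminal law $\gamma_{K,N}(\cdot\mid B_r(\mathbf z))$, whose entropy is $-\log p$ and whose total cost is at most $N(\mathcal V_0+\eta)-\varepsilon\log p$.
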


Upper semicontinuous objectives satisfy the recovery condition by density of empirical measures in
Galerkin subspaces; this includes indicator-type objectives with closed bad-event sets and open
good-event sets. The precise corollaries and proofs are given in
Appendix~\ref{app:proof-qualitative}.

\subsection{Projection-first quantitative bound}

Recall from the Feynman--Kac representation \Eqref{Feynman Kac} that $v_\varepsilon^{K,N}(t,\mathbf x)
    =
    -\varepsilon\log\E\left[
    \exp\left(
    -\frac{N}{\varepsilon}
    G_K(\mu^N_{\mathbf x+\Delta\mathbf W_{1-t}^K})
    \right)
    \right]$. This is the idealized projected particle value studied below. For \(\nu\in\cP_2(\cH_K)\), define the projected mean-field regularized value
\[
    V_\varepsilon^K(t,\nu)
    :=
    \inf_{\theta\in\Theta_K}
    \left\{
    G_K(\Law_{\cH_K}(X_1^{K,t,\xi;\theta}))
    +
    \E\left[
    \frac{\varepsilon}{2}
    \int_t^1\|\theta_s\|_{Q_K}^2\,ds
    \right]
    \right\},
\]
where \(\Law_{\cH_K}(\xi)=\nu\) and $X_s^{K,t,\xi;\theta}
    =
    \xi+\int_t^s\theta_r\,dr+W_s^K-W_t^K$.

The projected Master equation, verification statement, and regularity assumptions for
\(V_\varepsilon^K\) are given in Appendix~\ref{app:quantitative-assumptions}.

The proof first compares
\(N^{-1}v_\varepsilon^{K,N}\) with the projected mean-field value \(V_\varepsilon^K\), and then
compares \(V_\varepsilon^K(0,\delta_{\Pi_Kx_0})\) with the original optimum \(\mathcal V_0\), thus obtaining the difference between \(N^{-1}v_\varepsilon^{K,N}\) and $\mathcal{V}_0$.

Assume that the infimum $\mathcal V_0$ is attained by some $\mu^*\in\cP_2(\cH)$. For \(X\sim\mu^\star\), define $\Delta_K^{\mathrm{proj}}
    :=
    \E_{\mu^\star}\|(I-\Pi_K)X\|^2$, $D_{CM,K}^2(\mu^\star)
    :=
    \sum_{j=1}^K
    \frac{\E_{\mu^\star}[(X_j-x_{0,j})^2]}{\lambda_j}$. We assume the projected regularized value \(V_\varepsilon^K\) is a verified classical solution of
the projected Master equation with the derivative and residual bounds stated in
Appendix~\ref{app:quantitative-assumptions}. In particular, the residual constant
\(L_{\varepsilon,K}\) satisfies
\[
    \left|
    \int_{\cH_K}
    \Tr\!\left(
    Q_K\partial_{\mu\mu}^2V_\varepsilon^K(t,\nu)(y)(y)
    \right)\nu(dy)
    \right|
    \le L_{\varepsilon,K}.
\]
We also assume that \(G\) has a local H\"older continuity
condition of exponent \(\alpha\in(0,1]\) around a minimizer \(\mu^\star\), and the projected particle value \(v_\varepsilon^{K,N}\) is sufficiently regular
for the comparison argument: its spatial gradients are locally Lipschitz on compact time intervals
in \([0,1)\) and have at most linear growth. The precise statement is given in
Appendix~\ref{app:quantitative-assumptions}.

\begin{thm}[Projection-first convergence of the idealized particle approximation]
\label{thm:projection-first-global}
Assume $0<\varepsilon< 1$. Under the assumptions above, let \(v_\varepsilon^{K,N}\) be a classical solution of the projected
particle HJB. Then
\begin{align}
\label{eq:projection-first-final-bound}
0\!
\le\!
\frac1N v_\varepsilon^{K,N}(0,\Pi_K\mathbf x_0)-\mathcal V_0
&\le \!
\frac{L_{\varepsilon,K}}{2N}
+\!
C_*
\left(
\varepsilon^{2/\alpha}\Tr(Q)
+
\Delta_K^{\mathrm{proj}}
\right)^{\alpha/2}
\!\!\!\!
+
\frac{\varepsilon}{2}D_{CM,K}^2(\mu^\star)
+\!
\frac{\varepsilon K}{\alpha}\log\frac1\varepsilon .
\end{align}
Here \(\Pi_K\mathbf x_0=(\Pi_Kx_0,\ldots,\Pi_Kx_0)\), and \(C_*\) is independent of
\(K,N,\varepsilon\).
\end{thm}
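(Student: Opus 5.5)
The proof splits the gap into two comparisons, following the roadmap stated before the theorem:
\[
\tfrac1N v_\varepsilon^{K,N}(0,\Pi_K\mathbf x_0)-\mathcal V_0
=\Bigl[\tfrac1N v_\varepsilon^{K,N}(0,\Pi_K\mathbf x_0)-V_\varepsilon^K(0,\delta_{\Pi_Kx_0})\Bigr]
+\Bigl[V_\varepsilon^K(0,\delta_{\Pi_Kx_0})-\mathcal V_0\Bigr].
\]
The first bracket will produce the particle term \(L_{\varepsilon,K}/(2N)\). The second will produce the projection, regularization and entropy terms.

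\emph{Lower bound.} The lower bound \(0\le\cdot\) needs no PDE theory. For any admissible \(\boldsymbol\theta\), the terminal cost satisfies \(NG_K(\mu^N_{\mathbf X_1^K})\ge N\mathcal V_0\) pointwise, because \((\iota_K)_\#\mu^N_{\mathbf X_1^K}\in\cP_2(\cH)\). The control energy is nonnegative. Taking the infimum and dividing by \(N\) gives \(\tfrac1N v_\varepsilon^{K,N}\ge\mathcal V_0\).

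\emph{Particle step.} For the first bracket I will use a verification and comparison argument on the finite-dimensional HJB~\eqref{eq:particle_hjb_galerkin}. Set \(w(t,\mathbf x):=N V_\varepsilon^K(t,\mu^N_{\mathbf x})\). Since \(V_\varepsilon^K\) is a classical solution of the projected Master equation, the particle-projection formulas give
\[
D_{x_i}w=\partial_\mu V_\varepsilon^K(t,\mu^N_{\mathbf x})(x_i),\qquad
D^2_{x_i}w=\partial_x\partial_\mu V_\varepsilon^K(x_i)+\tfrac1N\partial^2_{\mu\mu}V_\varepsilon^K(x_i)(x_i).
\]
Substituting these into the particle HJB, the Master equation cancels every term except \(\tfrac1{2N}\sum_i\Tr(Q_K\partial^2_{\mu\mu}V_\varepsilon^K(x_i)(x_i))\). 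By the residual assumption, this is at most \(L_{\varepsilon,K}/2\) in absolute value. Thus \(w\) is a sub/supersolution of the particle HJB up to a source of size \(L_{\varepsilon,K}/2\), and it has the same terminal data \(NG_K(\mu^N_{\mathbf x})\).

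To compare \(w\) with \(v_\varepsilon^{K,N}\), I evaluate \(v_\varepsilon^{K,N}\) along the optimal feedback of \(w\), or equivalently use the Cole--Hopf linearization and the Feynman--Kac formula with the source term. The stated local Lipschitz and linear-growth regularity of \(D v_\varepsilon^{K,N}\) guarantees that the feedback SDE is well posed and that Itô's formula applies on \([0,1-\delta]\). Letting \(\delta\to0\) then gives \(|v_\varepsilon^{K,N}-w|\le L_{\varepsilon,K}/2\) at time \(0\). Dividing by \(N\) yields the term \(L_{\varepsilon,K}/(2N)\).

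\emph{Mean-field step.} For the second bracket I build an explicit admissible control for \(V_\varepsilon^K(0,\delta_{\Pi_Kx_0})\). Let \(X\sim\mu^\star\), and let \(\nu_K^\star\) be the law of \(\Pi_K X\), with coordinates \(X_j\) for \(j\le K\).
\begin{enumerate}
\item First I steer \(\Pi_Kx_0\) linearly toward a random target \(\Pi_K X\), via a Brownian-bridge type drift. The target is smoothed by an independent Gaussian of covariance \(\kappa Q_K\) so that the relative entropy stays finite.
\item By Girsanov, the cost of this control equals \(\varepsilon\) times the relative entropy of the terminal law with respect to the uncontrolled law \(\mathcal N(\Pi_Kx_0,Q_K)\).
\item That relative entropy splits into a Cameron--Martin energy part, \(\tfrac12 D^2_{CM,K}(\mu^\star)\), and a Gaussian-smoothing entropy part of order \(\tfrac K2\log(1/\kappa)\) plus bounded terms.
\item The resulting terminal law is within \(\mathcal W_2\)-distance \((\kappa\Tr(Q)+\Delta_K^{\mathrm{proj}})^{1/2}\) of \(\mu^\star\) after embedding: the smoothing contributes \(\kappa\Tr(Q_K)\le\kappa\Tr(Q)\), and the projection contributes \(\Delta_K^{\mathrm{proj}}\).
\item Local \(\alpha\)-Hölder continuity of \(G\) at \(\mu^\star\) then bounds the change in cost by \(C_*(\kappa\Tr Q+\Delta_K^{\mathrm{proj}})^{\alpha/2}\).
\end{enumerate}
Choosing \(\kappa=\varepsilon^{2/\alpha}\) turns the entropy term into \(\tfrac{\varepsilon K}{\alpha}\log\tfrac1\varepsilon\) and matches the displayed bound. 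The constants absorbed into \(C_*\) use \(\varepsilon<1\).

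\emph{Main obstacle.} I expect the particle comparison to be the hardest part. The particle projection of the Master equation must produce exactly the \(\tfrac1N\partial^2_{\mu\mu}\) residual. The comparison must also be justified up to the terminal time \(t=1\), where \(G_K\) may be nonsmooth and \(v_\varepsilon^{K,N}\) is only regular on compact subintervals of \([0,1)\). I would handle this by working on \([0,1-\delta]\), using continuity of both values at the terminal time through the Feynman--Kac representation, and then letting \(\delta\to0\).
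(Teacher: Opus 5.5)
Your proposal is correct and is essentially the paper's proof: the same split through $V_\varepsilon^K(0,\delta_{\Pi_Kx_0})$, the same $w=NV_\varepsilon^K(t,\mu^N_{\mathbf x})$ whose particle-HJB residual $R^{K,N}$ is bounded by $L_{\varepsilon,K}/2$, and the same F\"ollmer bound $V_\varepsilon^K(0,\delta_{\Pi_Kx_0})\le G_K(\nu)+\varepsilon\mathcal H(\nu\mid\mathcal N(\Pi_Kx_0,Q_K))$ for $\nu=\Law_{\cH_K}(\Pi_KX+\sqrt\kappa\,\zeta_K)$ with $\kappa=\varepsilon^{2/\alpha}$. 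Two cautions apply to your description of that step. First, the control energy equals this entropy only for the Markovian F\"ollmer drift; a bridge drift aimed at the sampled target has infinite energy. Second, the mixture entropy is bounded, not split, via data processing plus the Gaussian formula, and its leftover $\frac K2(\kappa-1)$ is nonpositive.

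The only real variation is the comparison step. The paper gets both signs by writing a linear equation for $v_\varepsilon^{K,N}-w$ with the averaged drift $-\frac1{2\varepsilon}Q_K(D_{x_i}v_\varepsilon^{K,N}+D_{x_i}w)$. Using $w$'s feedback as a test control gives only $v_\varepsilon^{K,N}\le w+\frac12L_{\varepsilon,K}$, with well-posedness supplied by the bounds on $\partial_\mu V_\varepsilon^K$, not on $Dv_\varepsilon^{K,N}$; that is exactly the direction the upper bound needs. Your Cole--Hopf variant, in which $R^{K,N}/\varepsilon$ enters the heat equation for $e^{-w/\varepsilon}$ as a bounded potential, gives both directions.
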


The proof decomposes the error into a particle approximation term
\(L_{\varepsilon,K}/(2N)\) and a projected regularization error; details are given in
Appendix~\ref{app:proof-quantitative}. The bound tends to zero along
any joint choice \(K=K_m\), \(N=N_m\), and \(\varepsilon=\varepsilon_m\downarrow0\) such that
\[
    \frac{L_{\varepsilon_m,K_m}}{N_m}\to0,\qquad
    \Delta_{K_m}^{\mathrm{proj}}\to0,\qquad
    \varepsilon_mD_{CM,K_m}^2(\mu^\star)\to0,\qquad
    \varepsilon_mK_m\log\frac1{\varepsilon_m}\to0 .
\]
Appendix~\ref{app:uniform-L} gives one sufficient FBSDE-based condition under which
\(L_{\varepsilon,K}\) is bounded uniformly in \((\varepsilon,K)\).

\subsection{Finite-candidate implementation with nonmatched covariance}
\label{sec:finite-unmatched-convergence}

We now connect the practical SCMO update with nonmatched execution
covariance to the matched projected particle value. A single outer-loop rollout consists of \(M\) Euler updates. Since each
outer loop applies the same \(M\)-step transition to its current cloud, we
analyze one representative rollout (\(L=1\)). The results below are stated
on the normalized horizon \([0,1]\); a general horizon follows by time
rescaling. Let $Q_{\mathrm{prop}}
=
\sigma_{\mathrm{prop}}^2Q$,
$Q_{\mathrm{dyn}}
=
\sigma_{\mathrm{dyn}}^2Q$,
$\alpha_0
=
\frac{\sigma_{\mathrm{dyn}}^2}
{\sigma_{\mathrm{prop}}^2}$,
so that $Q_{\mathrm{dyn}}
=
\alpha_0Q_{\mathrm{prop}}$. Their projected \(N\)-particle extensions are
$
\mathbf Q_{K,\mathrm{prop}}
=
I_N\otimes\Pi_KQ_{\mathrm{prop}}\Pi_K$, $\mathbf Q_{K,\mathrm{dyn}}
=
\alpha_0\mathbf Q_{K,\mathrm{prop}}$. Throughout this subsection,
\(v_\varepsilon^{K,N}\) and \(b\) denote the matched projected
particle value and feedback associated with
\(\mathbf Q_{K,\mathrm{prop}}\).

For fixed \(t<1\) and \(\mathbf x\in(H_K)^N\), let
\(\widehat b_i^{R,S}(t,\mathbf x)\) be the globally normalized
\(R\)-context, \(S\)-candidate estimator used by SCMO (see \Eqref{eq:main-finite-candidate-update}). Under the
integrability conditions stated in
Appendix~\ref{app:implementation-consistency}, for every fixed
\(S\geq1\),
\[
\widehat b_i^{R,S}(t,\mathbf x)
\longrightarrow
b_i(t,\mathbf x)
\qquad
\text{almost surely as }R\to\infty.
\]
If the corresponding fourth moments and inverse-denominator moments
are uniformly bounded, then
\begin{equation}
\label{eq:main-multicontext-rate}
\mathbb E
\left[
\left\|
\widehat b_i^{R,S}(t,\mathbf x)
-
b_i(t,\mathbf x)
\right\|^2
\right]
\leq
C
\left(
\frac1R+\frac1{RS}
\right).
\end{equation}
A controlled interacting-quadratic diagnostic in
Appendix~A.4 shows that the empirical feedback mean-square error
decays at the predicted rate; see
Figure~\ref{fig:finite-candidate-rate}.

We next define the exact-feedback nonmatched-covariance chain to which the fully finite
implementation converges. Let $h=\frac1M$, $t_m=mh$,
and set
\begin{equation}
\label{eq:main-exact-fixed-chain}
\mathbf X_{m+1}^h
=
\mathbf X_m^h
+
h\,b(t_m,\mathbf X_m^h)
+
\sqrt h\,
\mathbf Q_{K,\mathrm{dyn}}^{1/2}Z_{m+1},
\qquad
\mathbf X_0^h=\Pi_K\mathbf x_0,
\end{equation}
where \(Z_{m+1}\sim\mathcal N(0,I_{NK})\) are independent.

For a deterministic state \(\mathbf x\), write $\overline{\mathbf x}_h
=
\mathbf x+h\,b(t,\mathbf x)$, and define the matched and execution continuation scores
\[
\Phi_h^{v,\bullet}(t,\mathbf x)
=
\mathbb E\left[
v_\varepsilon^{K,N}
\left(
t+h,
\overline{\mathbf x}_h
+
\sqrt h\,
\mathbf Q_{K,\bullet}^{1/2}Z
\right)
\right],
\qquad
\bullet\in\{\mathrm{prop},\mathrm{dyn}\}.
\]
The signed one-step covariance contribution is $
c_h^{K,N,\varepsilon}(t,\mathbf x)
=
\Phi_h^{v,\mathrm{dyn}}(t,\mathbf x)
-
\Phi_h^{v,\mathrm{prop}}(t,\mathbf x)$. Its favorable and unfavorable parts are $g_{h}^{K,N,\varepsilon}
=
\left(-c_h^{K,N,\varepsilon}\right)_+$, $p_{h}^{K,N,\varepsilon}
=
\left(c_h^{K,N,\varepsilon}\right)_+$. Let
$\ell_\varepsilon^{K,N}(t,\mathbf x)
=
\frac{\varepsilon}{2}
\|b(t,\mathbf x)\|_{\mathbf Q_{K,\mathrm{prop}}^{-1}}^2$, and define the matched weak Euler residual
\[
r_h^{K,N,\varepsilon}(t,\mathbf x)
=
\Phi_h^{v,\mathrm{prop}}(t,\mathbf x)
+
h\ell_\varepsilon^{K,N}(t,\mathbf x)
-
v_\varepsilon^{K,N}(t,\mathbf x).
\]
Along the chain~\Eqref{eq:main-exact-fixed-chain}, set 
\begin{align}
\label{Euler_residual}
    &\mathcal E_h^{K,N,\varepsilon} =
\frac1N
\sum_{m=0}^{M-1}
\mathbb E
\left[
\left(
r_h^{K,N,\varepsilon}
(t_m,\mathbf X_m^h)
\right)_+
\right],\quad\mathcal G_{h}^{K,N,\varepsilon}
=
\frac1N
\sum_{m=0}^{M-1}
\mathbb E
\left[
g_{h}^{K,N,\varepsilon}
(t_m,\mathbf X_m^h)
\right], \\
&\mathcal P_{h}^{K,N,\varepsilon}
=
\frac1N
\sum_{m=0}^{M-1}
\mathbb E
\left[
p_{h}^{K,N,\varepsilon}
(t_m,\mathbf X_m^h)
\right].
\end{align}
Finally, let
\(\mathcal R_{K,N,\varepsilon}^{\mathrm{prop}}\) denote the
right-hand side of
\Eqref{eq:projection-first-final-bound}, with \(Q\) replaced by
\(Q_{\mathrm{prop}}\) and the Cameron--Martin term computed using the
eigenvalues of \(Q_{\mathrm{prop}}\).

\begin{thm}[One-rollout implementation bound]
\label{thm:main-finite-unmatched}
Assume the hypotheses of
Theorem~\ref{thm:projection-first-global}, with
\(Q_{\mathrm{prop}}\) as the covariance of the matched control
problem, together with the stability and integrability conditions in
Appendix~\ref{app:implementation-consistency}. Then the exact
feedback nonmatched-covariance chain satisfies
\begin{equation}
\label{eq:main-fixed-covariance-bound}
\mathbb E
G_K(\mu_{\mathbf X_M^h}^N)
-
\mathcal V_0
\leq
\mathcal R_{K,N,\varepsilon}^{\mathrm{prop}}
+
\mathcal E_h^{K,N,\varepsilon}
-
\mathcal G_{h}^{K,N,\varepsilon}
+
\mathcal P_{h}^{K,N,\varepsilon}.
\end{equation}

Let \(\mathbf C_m^{R,S,h}\) be the fully finite SCMO chain using the
same execution noises as \(\mathbf X_m^h\), but replacing \(b\) by
\(\widehat b^{R,S}\). Under the quantitative stability assumptions,
there is a constant \(C_{\mathrm{stab}}<\infty\), independent of
\(R,S\), such that
\begin{equation}
\label{eq:main-finite-chain-rate}
\max_{0\leq m\leq M}
\mathbb E
\left[
\left\|
\mathbf C_m^{R,S,h}
-
\mathbf X_m^h
\right\|^2
\right]
\leq
C_{\mathrm{stab}}
\left(
\frac1R+\frac1{RS}
\right).
\end{equation}
Consequently,
\begin{align}
\label{eq:main-practical-limsup}
\limsup_{\substack{R\to\infty\\RS\to\infty}}
\left[
\mathbb E
G_K(\mu_{\mathbf C_M^{R,S,h}}^N)
-
\mathcal V_0
\right]
\leq{}&
\mathcal R_{K,N,\varepsilon}^{\mathrm{prop}}
+
\mathcal E_h^{K,N,\varepsilon}-
\mathcal G_{h}^{K,N,\varepsilon}
+
\mathcal P_{h}^{K,N,\varepsilon}.
\end{align}
\end{thm}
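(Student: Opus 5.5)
The argument has three parts: a telescoping verification bound for the exact chain, a Gronwall-type stability estimate for the finite chain, and a passage to the limit.

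\textbf{Part 1: the exact chain bound \Eqref{eq:main-fixed-covariance-bound}.} Write $v=v_\varepsilon^{K,N}$ for the matched projected particle value associated with $\mathbf Q_{K,\mathrm{prop}}$. The terminal condition gives $v(1,\cdot)=NG_K(\mu^N_\cdot)$, so
\[
N\,\mathbb E\,G_K(\mu^N_{\mathbf X_M^h})=\mathbb E\,v(1,\mathbf X_M^h)
\]
and we telescope:
\[
\mathbb E\,v(1,\mathbf X_M^h)-v(0,\mathbf X_0^h)=\sum_{m=0}^{M-1}\mathbb E\bigl[v(t_{m+1},\mathbf X_{m+1}^h)-v(t_m,\mathbf X_m^h)\bigr].
\]
Condition each increment on $\mathcal F_m$. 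Because $Z_{m+1}$ is independent of $\mathbf X_m^h$, the conditional expectation of $v(t_{m+1},\mathbf X^h_{m+1})$ equals $\Phi_h^{v,\mathrm{dyn}}(t_m,\mathbf X_m^h)$. Write
\[
\Phi_h^{v,\mathrm{dyn}}=\Phi_h^{v,\mathrm{prop}}+c_h,
\qquad
\Phi_h^{v,\mathrm{prop}}=v+r_h-h\ell_\varepsilon,
\]
where the second identity is just the definition of $r_h$. Then each increment equals $r_h-h\ell_\varepsilon+c_h$. Since $\ell_\varepsilon\ge0$, we have $r_h\le (r_h)_+$ and $c_h=p_h-g_h$, so each increment is at most $(r_h)_+ + p_h - g_h$. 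Summing and dividing by $N$ gives
\[
\mathbb E\,G_K(\mu^N_{\mathbf X_M^h})\le \tfrac1N v(0,\Pi_K\mathbf x_0)+\mathcal E_h-\mathcal G_h+\mathcal P_h .
\]
Finally, Theorem~\ref{thm:projection-first-global}, applied with $Q_{\mathrm{prop}}$, bounds $\tfrac1Nv(0,\Pi_K\mathbf x_0)-\mathcal V_0\le\mathcal R^{\mathrm{prop}}_{K,N,\varepsilon}$.

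There is a technical point here. The quantities $v$ and $r_h$ at $t_m$ must be integrable along the chain. This holds because $v$ has at most quadratic growth (its gradient grows at most linearly) and the chain has finite second moments, the latter following from the linear growth of $b$. The last step, where $t_{m+1}=1$, uses the terminal value directly, which is why continuity of $v$ up to $t=1$ is needed there. I would rely on the stated appendix integrability conditions for this and not re-derive them.

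\textbf{Part 2: the finite chain rate \Eqref{eq:main-finite-chain-rate}.} Couple the two chains through the same execution noise, so the noise cancels in the difference:
\[
\mathbf C_{m+1}-\mathbf X_{m+1}=\mathbf C_m-\mathbf X_m+h\bigl(\widehat b^{R,S}(t_m,\mathbf C_m)-b(t_m,\mathbf X_m)\bigr).
\]
Split the drift difference as
\[
\bigl[\widehat b(t_m,\mathbf C_m)-b(t_m,\mathbf C_m)\bigr]+\bigl[b(t_m,\mathbf C_m)-b(t_m,\mathbf X_m)\bigr].
\]
The second bracket is handled by the local Lipschitz property of $b=-\varepsilon^{-1}\mathbf Q_{K,\mathrm{prop}}D v$ on $[0,1-h]$. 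The grid stops before $t=1$, and the quantitative stability assumption is meant to supply an effective Lipschitz or moment-localized bound. For the first bracket, the context and candidate draws are fresh and independent given $\mathbf C_m$. So I would apply \Eqref{eq:main-multicontext-rate} conditionally on $\mathbf C_m$, with constant uniform in the state by the uniform fourth-moment and inverse-denominator assumptions. This gives a bound of $C(1/R+1/(RS))$. Using $(a+b)^2\le(1+h)a^2+(1+1/h)b^2$ and a discrete Gronwall inequality over $M$ steps yields the claimed constant $C_{\mathrm{stab}}$, which depends on $M$, $h$, and the Lipschitz data but not on $R$ or $S$.

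I expect the \textbf{main obstacle} to be that $b$ is only locally Lipschitz. My first attempt would be to use the stated stability assumption as a global Lipschitz bound on the grid times. Failing that, I would localize on the event that both chains stay in a ball, and control the complement through fourth moments.

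\textbf{Part 3: the limsup \Eqref{eq:main-practical-limsup}.} By Part 2, $\mathbf C_M^{R,S,h}\to\mathbf X_M^h$ in $L^2$, hence in probability. Since $G_K(\mu^N_{\cdot})$ is continuous at the relevant points and uniformly integrable (or Hölder/bounded below with a growth bound), we get
\[
\mathbb E\,G_K(\mu^N_{\mathbf C_M})\to\mathbb E\,G_K(\mu^N_{\mathbf X_M}).
\]
If $G_K$ is merely upper semicontinuous, a limsup version via Fatou applied to $-G_K$, with a uniform-integrability bound, suffices. Combining this with Part 1 gives \Eqref{eq:main-practical-limsup}.
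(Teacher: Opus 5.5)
Your Parts 1 and 2 follow the paper's proofs essentially line by line. Part 1 is the proof of Theorem~\ref{thm:fixed-covariance-bound}: condition on $\mathcal F_m$, write $\Phi_h^{v,\mathrm{dyn}}=\Phi_h^{v,\mathrm{prop}}-g_h+p_h$, use the definition of $r_h$, telescope, drop $h\ell_\varepsilon\ge0$, and apply the projection-first bound with $Q_{\mathrm{prop}}$. Part 2 is the coupling argument of Theorem~\ref{thm:finite-scmo-consistency-fixed}, including $\|a+he\|^2\le(1+h)\|a\|^2+(h+h^2)\|e\|^2$ and iteration from $D_0=0$.

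Your ``first attempt'' on the Lipschitz issue is exactly what the paper does: Assumption~\ref{ass:finite-implementation-fixed} posits global Lipschitz continuity and linear growth of $b(t_m,\cdot)$ on the grid. The localization fallback would not give the stated rate. Remark~\ref{rem:polynomial-growth-finite-implementation} shows that with only local constants $L_{b,R_0}$ one obtains just convergence in probability. The reason is that the exit-event contribution is not controlled by $\delta_{R,S}$, and $C_{\mathrm{stab},R_0}$ may deteriorate as $R_0\to\infty$.

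\textbf{Missing step in Part 2.} The paper's Monte Carlo hypothesis is not uniform in the state. It reads $\mathbb E_{\Xi_m^{R,S}}\|\widehat b_m^{R,S}(\mathbf x)-b(t_m,\mathbf x)\|^2\le C_{\mathrm{MC}}\delta_{R,S}(1+\|\mathbf x\|^2)$, consistent with the $\mathbf x$-dependent constant in Proposition~\ref{prop:multicontext-consistency-fixed}. After conditioning on $\mathbf C_m^{R,S,h}$ you therefore need $\sup_{R,S}\max_m\mathbb E\|\mathbf C_m^{R,S,h}\|^2<\infty$.

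The paper proves this bound first. Linear growth of $b$ and the Monte Carlo bound give $\mathbb E_{\Xi_m^{R,S}}\|\widehat b_m^{R,S}(\mathbf x)\|^2\le(2B_b+4C_{\mathrm{MC}})(1+\|\mathbf x\|^2)$, using $\delta_{R,S}\le2$. A discrete Gronwall inequality for $\mathbb E\|\mathbf C_{m+1}^{R,S,h}\|^2$ then closes it. As written, your argument only covers the stronger state-uniform hypothesis.

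\textbf{Part 3 takes a different route.} The paper assumes $\mathcal J_{K,N}=G_K(\mu^N_{\cdot})$ is globally Lipschitz. That gives $|\mathbb E\mathcal J_{K,N}(\mathbf C_M^{R,S,h})-\mathbb E\mathcal J_{K,N}(\mathbf X_M^h)|\le L_{\mathcal J}\sqrt{C_{\mathrm{stab}}\delta_{R,S}}$ in one line, with an explicit rate.

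Your route (convergence in probability, continuity, uniform integrability) is the one the paper uses in its polynomial-growth remark. It gives no rate but needs less, and it closes cleanly under the standing hypotheses, so state it in place of the vague alternatives:
\begin{itemize}
\item $G$ is continuous with $|G(\mu)|\le C_G(1+M_2(\mu))$, so $\mathcal J_{K,N}$ is continuous with $|\mathcal J_{K,N}(\mathbf x)|\le C_G(1+\|\mathbf x\|^2/N)$.
\item The $L^2$ convergence from Part 2, together with $\mathbb E\|\mathbf X_M^h\|^2<\infty$, gives $\|\mathbf C_M^{R,S,h}\|^2\to\|\mathbf X_M^h\|^2$ in $L^1$.
\item Hence $\{\mathcal J_{K,N}(\mathbf C_M^{R,S,h})\}$ is uniformly integrable, and Vitali's theorem yields convergence of the expectations.
\end{itemize}
Your version of Part 3 thus removes the Lipschitz assumption on $\mathcal J_{K,N}$, at the cost of the quantitative bound.
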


The term \(\mathcal E_h^{K,N,\varepsilon}\) is the time-discretization
contribution. Appendix~\ref{app:implementation-consistency} gives the
identity
\[
r_h^{K,N,\varepsilon}(t,\mathbf x)
=
\frac{\varepsilon}{2}
\mathbb E
\int_0^h
\left\|
b(t,\mathbf x)
-
b(t+s,\mathbf Y_s^{t,\mathbf x})
\right\|_{\mathbf Q_{K,\mathrm{prop}}^{-1}}^2\,ds,
\]
so it is nonnegative and vanishes as \(h\downarrow0\) under suitable
H\"older--Lipschitz regularity of the feedback.

The covariance contribution is signed rather than being treated
solely as an error. In the smooth case,
\[
c_h^{K,N,\varepsilon}(t,\mathbf x)
=
\frac h2
\operatorname{Tr}\left(
\left[
\mathbf Q_{K,\mathrm{dyn}}
-
\mathbf Q_{K,\mathrm{prop}}
\right]
D^2v_\varepsilon^{K,N}
(t+h,\overline{\mathbf x}_h)
\right)
+
o(h).
\]
For proportional covariances, define $
\kappa_v(t,\mathbf x)
=
\operatorname{Tr}\left(
\mathbf Q_{K,\mathrm{prop}}
D^2v_\varepsilon^{K,N}(t,\mathbf x)
\right)$. Then the leading contribution is
$
\frac h2(\alpha_0-1)\kappa_v$. Thus a colder execution covariance, \(\alpha_0<1\), is favorable in
regions of positive covariance-weighted continuation-value curvature,
whereas a hotter covariance, \(\alpha_0>1\), is favorable in regions
of negative weighted curvature. Accordingly, the signed covariance contribution in
\Eqref{eq:main-fixed-covariance-bound} is favorable whenever $\mathcal G_{h}^{K,N,\varepsilon}
>
\mathcal P_{h}^{K,N,\varepsilon}$. If, more strongly,
$\mathcal G_{h}^{K,N,\varepsilon}
>
\mathcal E_h^{K,N,\varepsilon}
+
\mathcal P_{h}^{K,N,\varepsilon}$, then the complete one-rollout upper bound is strictly smaller than the
matched particle--projection--regularization certificate
\(\mathcal R_{K,N,\varepsilon}^{\mathrm{prop}}\). Full proofs,
continuous-time analogues, and checkable bounded and polynomial-growth
conditions are given in
Appendix~\ref{app:implementation-consistency}.
\section{Experiments}
\label{sec:experiments}
We evaluate SCMO on two structured benchmark suites and one external simulator
environment. Across these experiments, we test multimodal-law recovery, escape
from suboptimal basins, and optimization under nonsmooth or discontinuous
law-level objectives. The main study comprises function-space optimization
with PDE energies, open-loop trajectory-law optimization, and a fixed-reset
Push-T simulator experiment. Complementary low-dimensional checks isolate
these capabilities in controlled settings, including progress from a flat
nonsmooth plateau. Complete results and figures for these checks are provided
in Appendix~\ref{app:experiments}; additional studies examine the effect of
\(R\), proposal--execution covariance nonmatch, and latent image generation in
Appendices~\ref{app:context-cloud-quadratic},
\ref{app:unmatched-curvature-diagnostic}, and
\ref{app:latent-matching}, respectively. Code for reproducing our experimental results is available at
\url{https://github.com/HenryCHEUNG7373/SCMO}.

\paragraph{Protocol and baselines.}
Within each task, empirical-law methods share the same initial cloud, and all
methods use the corresponding common initialization and paired evaluation
seeds. Method-specific hyperparameters are selected on separate development
runs and frozen before evaluation. Derivative-free methods are matched by
maximum primitive objective work: particle-energy evaluations in the PDE
suite, trajectory simulations in the passage suite, and simulator calls in Push-T. More details are
reported in Appendix~\ref{app:experiments}.

We use benchmark-specific subsets of complementary controls: Point Adam and
Particle Adam~\citep{kingma2015adam}; a diagonal-Gaussian family optimized
using reparameterized Monte Carlo gradients~\citep{kingma2014autoencoding};
whole-cloud CEM~\citep{deboer2005crossentropy} and
CMA-ES~\citep{hansen2001cma}; greedy whole-cloud random
search~\citep{solis1981randomsearch}; and an MPPI-style open-loop
optimizer~\citep{williams2017mppi}. Point Adam, Particle Adam, CEM, and CMA-ES
use the standard update rules of their cited optimizers on the stated decision
representations. The diagonal-Gaussian, whole-cloud random-search, and
MPPI-style empirical-law controls are defined in
Appendix~\ref{app:experiments}.

\paragraph{Function-space PDE laws.}
Each empirical law comprises \(N=256\) functions in \(L^2([0,1])\),
represented using \(K=32\) Neumann cosine coefficients. The five tasks test
smooth balanced two-phase Allen--Cahn recovery (P1); a nonsmooth, nonconvex
TV/double-well objective with an absolute-moment penalty (P2); discontinuous
interacting three-phase quotas (P3); escape from the shallow phase of a tilted
double well to its deeper global phase (P4); and escape from a metastable
central phase while balancing mass between two outer global phases (P5).

As shown in Table~\ref{tab:main-results}, SCMO attains the lowest mean
objective on P1--P3 and is the only method to satisfy the P3 quotas in all
\(20\) runs. SCMO and the diagonal-Gaussian baseline both escape the P4 local
phase in every run, with the Gaussian baseline attaining the lower objective.
On P5, only SCMO consistently recovers the balanced outer-phase law. Although
Particle Adam attains a lower P5 objective, it fails the balanced-law criterion
in every run, showing that objective value alone does not certify recovery of
the intended law structure.

\paragraph{Trajectory laws.}
Each empirical law comprises \(N=128\) open-loop, 42-step trajectories. The
five tasks test broad single-passage feasibility (T1); narrow local feasibility
(T2); continuous two-route mass recovery (T3); discontinuous, interacting
route quotas (T4); and escape from a feasible but inferior lower-route basin
to a better upper route under the ordinary trajectory objective (T5).

As shown in Table~\ref{tab:main-results}, all methods solve T1, whereas SCMO
remains effective on the narrow T2 task. SCMO is the only method to satisfy
the T3 full-success and two-route criterion in all \(20\) runs. On T4, SCMO
succeeds in \(20/20\) runs, compared with \(14/20\) for Mixture MPPI and none
for the remaining baselines. For T5, SCMO and
Mixture MPPI discover the better upper route in every run, whereas random
search and CEM remain in the lower-route basin; SCMO attains the lowest mean
objective.

\paragraph{Fixed-reset Push-T simulator check.}
We optimize empirical laws of \(N=8\) open-loop action sequences under the
standard contact-rich Push-T dynamics~\citep{chi2023diffusionpolicy}. The
physical task is to control the pushing agent so that its contacts move and
rotate the T-shaped block into a prescribed target pose. This is a fixed-reset
open-loop optimization study, not policy learning or the canonical
state-distribution generalization protocol. We evaluate ten held-out
optimization seeds under five deterministic scale perturbations
\(\gamma\in\{.85,.90,1,1.05,1.10\}\) of one frozen reset, using exactly
\(4{,}408\) simulator calls per method.

The objective combines mean rollout loss with route-balance and
minimum-route-mass penalties. Here, \(p_L\) and \(p_R\) are the fractions of
all eight particles that succeed through the left and right routes; failed or
unclassified rollouts contribute to neither. The quota \(p_L,p_R\ge .25\)
therefore requires at least two successful trajectories on each route. As
shown in Table~\ref{tab:main-results}, SCMO attains the lowest seed-averaged
objective, highest success mass, and closest route balance. SCMO and CEM
satisfy the quota in all \(50\) condition--seed cells and under all five
conditions for every seed, but SCMO achieves the lower objective and higher
success mass; the remaining baselines satisfy the quota less consistently.

\begin{table}[t]
\centering
\scriptsize
\setlength{\tabcolsep}{4.1pt}
\caption{Main results. Panels A--B report mean $\pm$ standard deviation and
structural successes over 20 paired seeds; Panel C reports ten seed-level
averages across five Push-T conditions, with pooled counts shown descriptively.}
\label{tab:main-results}

\begin{tabular}{@{}lrrrr@{}}
\toprule
\multicolumn{5}{l}{\textbf{A. Function-space PDE laws:
objective [successes/20]}}\\
Task & SCMO & Particle Adam & CEM cloud & Gaussian \\
\midrule
P1 Smooth Allen--Cahn
& \(\bm{0.128\pm.003}\) & \(\underline{0.184\pm.017}\)
& \(1.734\pm.042\) & \(0.996\pm.211\) \\
P2 TV--Allen--Cahn
& \(\bm{0.984\pm.028}\) & \(1.747\pm.031\)
& \(\underline{1.069\pm.020}\) & \(1.118\pm.279\) \\
P3 Three-phase quotas
& \(\bm{-2.541\pm.635}\,[20]\) & \(2.174\pm.034\,[0]\)
& \(2.835\pm.083\,[0]\) & \(\underline{2.009\pm.001}\,[0]\) \\
P4 Tilted local/global
& \(\underline{0.00883\pm.00008}\,[20]\) & \(3.846\pm.157\,[0]\)
& \(5.654\pm.007\,[0]\) & \(\bm{0.00109\pm.00010}\,[20]\) \\
P5 Metastable balanced law
& \(\underline{0.342\pm.004}\,[20]\) & \(\bm{0.264\pm.041}\,[0]\)
& \(5.263\pm.131\,[0]\) & \(5.271\pm.098\,[0]\) \\
\midrule
\multicolumn{5}{l}{\textbf{B. Trajectory laws:
objective [successes/20]}}\\
Task & SCMO & Random search & CEM cloud & Mixture MPPI \\
\midrule
T1 Easy passage
& \(0.886\pm.002\,[20]\) & \(\bm{0.846\pm.000}\,[20]\)
& \(0.886\pm.002\,[20]\) & \(\underline{0.848\pm.000}\,[20]\) \\
T2 Narrow passage
& \(\bm{1.097\pm.028}\,[20]\) & \(7.900\pm.002\,[0]\)
& \(7.853\pm.000\,[0]\) & \(\underline{3.188\pm.416}\,[20]\) \\
T3 Two-route multimodal
& \(\bm{0.080\pm.006}\,[20]\) & \(8.005\pm.252\,[0]\)
& \(3.930\pm.322\,[0]\) & \(\underline{1.360\pm.392}\,[0]\) \\
T4 Discontinuous quotas
& \(\bm{-0.482\pm.008}\,[20]\) & \(8.448\pm.128\,[0]\)
& \(6.628\pm.149\,[0]\) & \(\underline{1.073\pm.777}\,[14]\) \\
T5 Local/global routes
& \(\bm{2.715\pm.000}\,[20]\)
& \(2.857\pm.001\,[0]\)
& \(\underline{2.829\pm.001}\,[0]\)
& \(2.840\pm.004\,[20]\) \\
\bottomrule
\end{tabular}

\vspace{1.5mm}

\begin{tabular}{@{}lrrrrr@{}}
\toprule
\multicolumn{6}{l}{\textbf{C. Push-T external simulator anchor}}\\
Method & Objective \(\downarrow\) & \(p_s\uparrow\)
& Quota runs & All-8 runs & Quota seeds (all 5)\\
\midrule
SCMO
& \(\bm{0.021\pm.004}\) & \(\bm{0.965\pm.027}\)
& 50/50 & 36/50 & 10/10 \\
CEM cloud
& \(\underline{0.062\pm.014}\) & \(\underline{0.920\pm.059}\)
& 50/50 & 27/50 & 10/10 \\
Random-search cloud
& \(0.172\pm.043\) & \(0.585\pm.058\)
& 45/50 & 0/50 & 6/10 \\
MPPI-style cloud
& \(0.227\pm.026\) & \(0.520\pm.047\)
& 28/50 & 0/50 & 0/10 \\
Full CMA-ES cloud
& \(0.318\pm.038\) & \(0.420\pm.037\)
& 6/50 & 0/50 & 0/10 \\
\bottomrule
\end{tabular}
\end{table}

\section{Conclusion}
We introduced SCMO, an objective-only optimizer for empirical probability
laws on separable Hilbert spaces. Combining particle--Galerkin approximation
with a Cole--Hopf representation, SCMO estimates feedback through
context-conditioned one-particle replacement scores. We established
matched-covariance convergence, multi-context finite-candidate consistency,
and a signed, curvature-dependent analysis of proposal--execution
covariance nonmatch. Experiments on function-space PDE energies, open-loop
trajectory laws, and fixed-reset Push-T manipulation show that SCMO handles
nonsmooth, nonconvex, and discontinuous law-level objectives, escapes
suboptimal basins, and recovers prescribed multimodal, non-Dirac law
structure.

\section*{Funding}
This work was supported in part by the Natural Sciences and Engineering
Research Council of Canada (NSERC).

\bibliography{iclr2027_conference}

\begin{thebibliography}{32}
\providecommand{\natexlab}[1]{#1}
\providecommand{\url}[1]{\texttt{#1}}
\expandafter\ifx\csname urlstyle\endcsname\relax
  \providecommand{\doi}[1]{doi: #1}\else
  \providecommand{\doi}{doi: \begingroup \urlstyle{rm}\Url}\fi

\bibitem[Carrillo et~al.(2018)Carrillo, Choi, Totzeck, and Tse]{carrillo2018cbo}
Jos{\'e}~A. Carrillo, Young-Pil Choi, Claudia Totzeck, and Oliver Tse.
\newblock {An Analytical Framework for a Consensus-Based Global Optimization Method}.
\newblock \emph{Mathematical Models and Methods in Applied Sciences}, 28\penalty0 (6):\penalty0 1037--1066, 2018.
\newblock \doi{10.1142/S0218202518500276}.

\bibitem[Chaudhari et~al.(2017)Chaudhari, Choromanska, Soatto, LeCun, Baldassi, Borgs, Chayes, Sagun, and Zecchina]{chaudhari2017entropy}
Pratik Chaudhari, Anna Choromanska, Stefano Soatto, Yann LeCun, Carlo Baldassi, Christian Borgs, Jennifer~T. Chayes, Levent Sagun, and Riccardo Zecchina.
\newblock {{Entropy-SGD}: Biasing Gradient Descent Into Wide Valleys}.
\newblock In \emph{International Conference on Learning Representations}, 2017.
\newblock URL \url{https://openreview.net/forum?id=B1YfAfcgl}.

\bibitem[Chen et~al.(2023)Chen, Ren, and Wang]{chen2023entropic}
Fan Chen, Zhenjie Ren, and Songbo Wang.
\newblock {Entropic fictitious play for mean field optimization problem}.
\newblock \emph{Journal of Machine Learning Research}, 24\penalty0 (211):\penalty0 1--36, 2023.

\bibitem[Chi et~al.(2023)Chi, Feng, Du, Xu, Cousineau, Burchfiel, and Song]{chi2023diffusionpolicy}
Cheng Chi, Siyuan Feng, Yilun Du, Zhenjia Xu, Eric Cousineau, Benjamin C.~M. Burchfiel, and Shuran Song.
\newblock {Diffusion Policy: Visuomotor Policy Learning via Action Diffusion}.
\newblock In \emph{Proceedings of Robotics: Science and Systems}, Daegu, Republic of Korea, July 2023.
\newblock \doi{10.15607/RSS.2023.XIX.026}.

\bibitem[Chizat \& Bach(2018)Chizat and Bach]{chizat2018global}
L{\'e}na{\"i}c Chizat and Francis Bach.
\newblock {On the Global Convergence of Gradient Descent for Over-parameterized Models Using Optimal Transport}.
\newblock In \emph{Advances in Neural Information Processing Systems}, volume~31, 2018.

\bibitem[de~Boer et~al.(2005)de~Boer, Kroese, Mannor, and Rubinstein]{deboer2005crossentropy}
Pieter-Tjerk de~Boer, Dirk~P. Kroese, Shie Mannor, and Reuven~Y. Rubinstein.
\newblock {A Tutorial on the Cross-Entropy Method}.
\newblock \emph{Annals of Operations Research}, 134\penalty0 (1):\penalty0 19--67, 2005.
\newblock \doi{10.1007/s10479-005-5724-z}.

\bibitem[Ding et~al.(2025)Ding, Jaquier, Peters, and Rozo]{ding2025riemannian}
Haoran Ding, No{\'e}mie Jaquier, Jan Peters, and Leonel Rozo.
\newblock {Fast and Robust Visuomotor Riemannian Flow Matching Policy}.
\newblock \emph{IEEE Transactions on Robotics}, 41:\penalty0 5327--5343, 2025.
\newblock \doi{10.1109/TRO.2025.3601293}.

\bibitem[Garg et~al.(2024)Garg, Zhang, and Zhou]{garg2024soft}
Jhanvi Garg, Xianyang Zhang, and Quan Zhou.
\newblock {Soft-constrained Schr{\"o}dinger Bridge: a stochastic control approach}.
\newblock In \emph{International Conference on Artificial Intelligence and Statistics}, pp.\  4429--4437. PMLR, 2024.

\bibitem[Hansen \& Ostermeier(2001)Hansen and Ostermeier]{hansen2001cma}
Nikolaus Hansen and Andreas Ostermeier.
\newblock Completely derandomized self-adaptation in evolution strategies.
\newblock \emph{Evolutionary Computation}, 9\penalty0 (2):\penalty0 159--195, 2001.
\newblock \doi{10.1162/106365601750190398}.

\bibitem[Hu et~al.(2021)Hu, Ren, {\v{S}}i{\v{s}}ka, and Szpruch]{hu2021meanfield}
Kaitong Hu, Zhenjie Ren, David {\v{S}}i{\v{s}}ka, and {\L}ukasz Szpruch.
\newblock {Mean-Field Langevin Dynamics and Energy Landscape of Neural Networks}.
\newblock \emph{Annales de l'Institut Henri Poincar{\'e}, Probabilit{\'e}s et Statistiques}, 57\penalty0 (4):\penalty0 2043--2065, 2021.
\newblock \doi{10.1214/20-AIHP1140}.

\bibitem[Huang \& Kouhkouh(2026)Huang and Kouhkouh]{huang2026hilbert}
Hui Huang and Hicham Kouhkouh.
\newblock {A Derivative-Free Particle Method for Optimization in Hilbert Spaces}.
\newblock \emph{arXiv preprint arXiv:2605.31565}, 2026.

\bibitem[Kerrigan et~al.(2024)Kerrigan, Migliorini, and Smyth]{kerrigan2024functional}
Gavin Kerrigan, Giosue Migliorini, and Padhraic Smyth.
\newblock {Functional Flow Matching}.
\newblock In \emph{Proceedings of the 27th International Conference on Artificial Intelligence and Statistics}, 2024.

\bibitem[Khatab \& Totzeck(2026)Khatab and Totzeck]{khatab2026consensus}
Mahmoud Khatab and Claudia Totzeck.
\newblock A consensus-based optimization algorithm using gaussian processes for global optimization problems in {Sobolev} spaces.
\newblock \emph{arXiv preprint arXiv:2603.15337}, 2026.

\bibitem[Kingma \& Ba(2015)Kingma and Ba]{kingma2015adam}
Diederik~P. Kingma and Jimmy Ba.
\newblock {Adam: A Method for Stochastic Optimization}.
\newblock In \emph{International Conference on Learning Representations}, 2015.
\newblock URL \url{https://arxiv.org/abs/1412.6980}.

\bibitem[Kingma \& Welling(2014)Kingma and Welling]{kingma2014autoencoding}
Diederik~P. Kingma and Max Welling.
\newblock {Auto-Encoding Variational Bayes}.
\newblock In \emph{International Conference on Learning Representations}, 2014.
\newblock URL \url{https://arxiv.org/abs/1312.6114}.

\bibitem[Kook et~al.(2024)Kook, Zhang, Chewi, Erdogdu, and Li]{kook2024meanfield}
Yunbum Kook, Matthew~S. Zhang, Sinho Chewi, Murat~A. Erdogdu, and Mufan Li.
\newblock {Sampling from the Mean-Field Stationary Distribution}.
\newblock In \emph{Proceedings of the 37th Conference on Learning Theory}, volume 247 of \emph{Proceedings of Machine Learning Research}, pp.\  3099--3136. PMLR, 2024.

\bibitem[Li et~al.(2026)Li, Sun, Turk, and Zhu]{li2026functional}
Zhiqi Li, Yuchen Sun, Greg Turk, and Bo~Zhu.
\newblock {Functional Mean Flow in Hilbert Space}.
\newblock In \emph{Proceedings of the IEEE/CVF Conference on Computer Vision and Pattern Recognition}, pp.\  1928--1938, June 2026.

\bibitem[Liu \& Wang(2016)Liu and Wang]{liu2016svgd}
Qiang Liu and Dilin Wang.
\newblock {Stein Variational Gradient Descent: A General Purpose Bayesian Inference Algorithm}.
\newblock In \emph{Advances in Neural Information Processing Systems}, volume~29, 2016.

\bibitem[Luu et~al.(2024)Luu, Yu, Williams, Mikkola, Hartmann, Puolam{\"a}ki, and Klami]{luu2024nongeodesic}
Hoang Phuc~Hau Luu, Hanlin Yu, Bernardo Williams, Petrus Mikkola, Marcelo Hartmann, Kai Puolam{\"a}ki, and Arto Klami.
\newblock {Non-geodesically-convex Optimization in the Wasserstein Space}.
\newblock In \emph{Advances in Neural Information Processing Systems}, volume~37, 2024.

\bibitem[Nitanda \& Suzuki(2017)Nitanda and Suzuki]{nitanda2017stochastic}
Atsushi Nitanda and Taiji Suzuki.
\newblock {Stochastic Particle Gradient Descent for Infinite Ensembles}.
\newblock \emph{arXiv preprint arXiv:1712.05438}, 2017.

\bibitem[Peixoto et~al.(2025)Peixoto, Csillag, da~Costa, and Saporito]{peixoto2026random}
Caio Peixoto, Daniel Csillag, Bernardo F.~P. da~Costa, and Yuri~F. Saporito.
\newblock {Random Gradient-Free Optimization in Infinite Dimensional Spaces}.
\newblock \emph{arXiv preprint arXiv:2512.20566}, 2025.

\bibitem[Pinnau et~al.(2017)Pinnau, Totzeck, Tse, and Martin]{pinnau2017cbo}
Ren{\'e} Pinnau, Claudia Totzeck, Oliver Tse, and Stephan Martin.
\newblock {A Consensus-Based Model for Global Optimization and Its Mean-Field Limit}.
\newblock \emph{Mathematical Models and Methods in Applied Sciences}, 27\penalty0 (1):\penalty0 183--204, 2017.
\newblock \doi{10.1142/S0218202517400061}.

\bibitem[Pittorino et~al.(2021)Pittorino, Lucibello, Feinauer, Perugini, Baldassi, Demyanenko, and Zecchina]{pittorino2021entropic}
Fabrizio Pittorino, Carlo Lucibello, Christoph Feinauer, Gabriele Perugini, Carlo Baldassi, Elizaveta Demyanenko, and Riccardo Zecchina.
\newblock {Entropic Gradient Descent Algorithms and Wide Flat Minima}.
\newblock In \emph{International Conference on Learning Representations}, 2021.
\newblock URL \url{https://openreview.net/forum?id=xjXg0bnoDmS}.

\bibitem[Qiu(2026)]{qiu2026stochastic}
Jinniao Qiu.
\newblock {Stochastic Control Methods for Optimization}.
\newblock \emph{arXiv preprint arXiv:2601.01248}, 2026.

\bibitem[Solis \& Wets(1981)Solis and Wets]{solis1981randomsearch}
Francisco~J. Solis and Roger J.-B. Wets.
\newblock {Minimization by Random Search Techniques}.
\newblock \emph{Mathematics of Operations Research}, 6\penalty0 (1):\penalty0 19--30, 1981.
\newblock \doi{10.1287/moor.6.1.19}.

\bibitem[Tankala et~al.(2025)Tankala, Nagaraj, and Raj]{tankala2025meanfield}
Chandan Tankala, Dheeraj~M. Nagaraj, and Anant Raj.
\newblock {Beyond Propagation of Chaos: A Stochastic Algorithm for Mean Field Optimization}.
\newblock In \emph{Proceedings of the 38th Conference on Learning Theory}, volume 291 of \emph{Proceedings of Machine Learning Research}, pp.\  5410--5440. PMLR, 2025.

\bibitem[Theodorou et~al.(2010)Theodorou, Buchli, and Schaal]{theodorou2010pathintegral}
Evangelos~A. Theodorou, Jonas Buchli, and Stefan Schaal.
\newblock {A Generalized Path Integral Control Approach to Reinforcement Learning}.
\newblock \emph{Journal of Machine Learning Research}, 11\penalty0 (104):\penalty0 3137--3181, 2010.

\bibitem[Wierstra et~al.(2014)Wierstra, Schaul, Glasmachers, Sun, Peters, and Schmidhuber]{wierstra2014nes}
Daan Wierstra, Tom Schaul, Tobias Glasmachers, Yi~Sun, Jan Peters, and J{\"u}rgen Schmidhuber.
\newblock Natural evolution strategies.
\newblock \emph{Journal of Machine Learning Research}, 15\penalty0 (27):\penalty0 949--980, 2014.

\bibitem[Williams et~al.(2017{\natexlab{a}})Williams, Aldrich, and Theodorou]{williams2017mppi}
Grady Williams, Andrew Aldrich, and Evangelos~A. Theodorou.
\newblock {Model Predictive Path Integral Control: From Theory to Parallel Computation}.
\newblock \emph{Journal of Guidance, Control, and Dynamics}, 40\penalty0 (2):\penalty0 344--357, 2017{\natexlab{a}}.
\newblock \doi{10.2514/1.G001921}.

\bibitem[Williams et~al.(2017{\natexlab{b}})Williams, Wagener, Goldfain, Drews, Rehg, Boots, and Theodorou]{williams2017information}
Grady Williams, Nolan Wagener, Brian Goldfain, Paul Drews, James~M. Rehg, Byron Boots, and Evangelos~A. Theodorou.
\newblock {Information Theoretic {MPC} for Model-Based Reinforcement Learning}.
\newblock In \emph{Proceedings of the IEEE International Conference on Robotics and Automation}, pp.\  1714--1721, 2017{\natexlab{b}}.
\newblock \doi{10.1109/ICRA.2017.7989202}.

\bibitem[Xu et~al.(2022)Xu, Korba, and Slep{\v{c}}ev]{xu2022quantization}
Lantian Xu, Anna Korba, and Dejan Slep{\v{c}}ev.
\newblock {Accurate Quantization of Measures via Interacting Particle-based Optimization}.
\newblock In \emph{Proceedings of the 39th International Conference on Machine Learning}, volume 162 of \emph{Proceedings of Machine Learning Research}, pp.\  24576--24595. PMLR, 2022.

\bibitem[Yao et~al.(2024)Yao, Huang, and Yang]{yao2024klflow}
Rentian Yao, Linjun Huang, and Yun Yang.
\newblock {Minimizing Convex Functionals over Space of Probability Measures via {KL} Divergence Gradient Flow}.
\newblock In \emph{Proceedings of the 27th International Conference on Artificial Intelligence and Statistics}, volume 238 of \emph{Proceedings of Machine Learning Research}, pp.\  2530--2538. PMLR, 2024.

\end{thebibliography}
\bibliographystyle{iclr2027_conference}
\newpage
\appendix
\raggedbottom
\section{Appendix}

\subsection{Why Cole--Hopf is applied after particle approximation}
\label{app:cole_hopf_obstruction}

We explain why the Cole--Hopf transform is applied to the projected \(N\)-particle HJB, rather
than directly to the mean-field Master equation. The formal infinite-dimensional Master equation
has the form
\[
\partial_t V_\varepsilon(t,\mu)
+
\frac12\int_{\cH}
\Tr\!\left(Q\partial_x\partial_\mu V_\varepsilon(t,\mu)(x)\right)\mu(dx)
-
\frac1{2\varepsilon}
\int_{\cH}
\|Q^{1/2}\partial_\mu V_\varepsilon(t,\mu)(x)\|^2\mu(dx)
=0 .
\]
In finite-dimensional HJB equations, the Cole--Hopf transform works because the Laplacian of
\(u=\exp(-V/\varepsilon)\) produces a quadratic gradient term that cancels the Hamiltonian.
This cancellation does not occur at the level of the idiosyncratic-noise Master equation.

Indeed, set $u(t,\mu):=\exp(-V_\varepsilon(t,\mu)/\varepsilon)$. Then $\partial_\mu u(t,\mu)(x)
    =
    -\frac1\varepsilon u(t,\mu)\partial_\mu V_\varepsilon(t,\mu)(x)$. Taking the spatial derivative in \(x\) gives $\partial_x\partial_\mu u(t,\mu)(x)
    =
    -\frac1\varepsilon u(t,\mu)\partial_x\partial_\mu V_\varepsilon(t,\mu)(x)$. There is no additional quadratic term, because \(u(t,\mu)\) depends on the measure variable
\(\mu\), but not on the microscopic spatial variable \(x\). Consequently,
\[
\partial_t u(t,\mu)
+
\frac12\int_{\cH}
\Tr\!\left(Q\partial_x\partial_\mu u(t,\mu)(x)\right)\mu(dx)
=
-\frac{u(t,\mu)}{2\varepsilon^2}
\int_{\cH}
\|Q^{1/2}\partial_\mu V_\varepsilon(t,\mu)(x)\|^2\mu(dx),
\]
so the nonlinear control penalty remains. Thus the idiosyncratic Master equation is not linearized
by the Cole--Hopf transform.

This is the reason for passing first to the projected \(N\)-particle problem. In the particle HJB,
the state variable is the finite-dimensional vector
\(\mathbf x=(x_1,\ldots,x_N)\), and the second-order operator is the ordinary particle Laplacian $\frac12\sum_{i=1}^N\Tr(Q_KD_{x_i}^2)$. For
$u_\varepsilon^{K,N}
    =
    \exp(-v_\varepsilon^{K,N}/\varepsilon)$, the finite-dimensional chain rule produces the missing quadratic gradient terms, which cancel the
particle Hamiltonian. This yields a linear heat equation and hence the risk-sensitive
Feynman--Kac representation used in Section~\ref{sec:scmo}.
\subsection{A curvature diagnostic for nonmatched execution covariance}
\label{app:unmatched-curvature-diagnostic}

We give a controlled experiment that isolates the signed covariance term in
Theorem~\ref{thm:fixed-covariance-bound}. Consider the one-dimensional
additive measure objective $G(\mu)
=
\int_{\mathbb R} f(x)\,\mu(dx)$, $f(x)=\frac14(x^2-1)^2$. For an empirical measure, the extensive terminal cost is $F_N(\mathbf x)
=
\sum_{i=1}^N f(x_i)$. Consequently, the matched particle value factorizes as
\[
v_\varepsilon^N(t,\mathbf x)
=
\sum_{i=1}^N v_\varepsilon(t,x_i),
\]
where
\[
v_\varepsilon(t,x)
=
-\varepsilon
\log
\mathbb E
\left[
\exp\left(
-\frac{f(x+\sqrt{(1-t)q_{\mathrm{prop}}}Z)}{\varepsilon}
\right)
\right],
\qquad
Z\sim\mathcal N(0,1).
\]
The exact matched feedback and covariance-weighted curvature are therefore
\[
b(t,x)
=
-\frac{q_{\mathrm{prop}}}{\varepsilon}
\partial_xv_\varepsilon(t,x),
\qquad
\kappa_v(t,x)
=
q_{\mathrm{prop}}\partial_{xx}v_\varepsilon(t,x).
\]
We evaluate these quantities by Gauss--Hermite quadrature. In particular, if
\(\pi_{t,x}\) denotes the Gibbs-tilted Gaussian proposal, i.e., $\pi_{t,x}(dy):=\dfrac{e^{-f(y)/\varepsilon}\,\mathcal N(x,(1-t)q_{\mathrm{prop}})(dy)}{\int_{\mathbb R}e^{-f(z)/\varepsilon}\,\mathcal N(x,(1-t)q_{\mathrm{prop}})(dz)}$, then $\partial_{xx}v_\varepsilon(t,x)
=
\mathbb E_{\pi_{t,x}}[f''(Y)]
-
\frac1\varepsilon
\operatorname{Var}_{\pi_{t,x}}(f'(Y))$, so the continuation-value curvature is obtained without finite differences.

For a fixed variance ratio \(\alpha_0>0\), define the exact one-step effect
\[
\begin{aligned}
c_h(\alpha_0;t,x)
={}&
\mathbb E\left[
 v_\varepsilon\left(
 t+h,
 \overline x_h+
 \sqrt{h\alpha_0q_{\mathrm{prop}}}Z
 \right)
\right]
\\
&-
\mathbb E\left[
 v_\varepsilon\left(
 t+h,
 \overline x_h+
 \sqrt{hq_{\mathrm{prop}}}Z
 \right)
\right],
\qquad
\overline x_h=x+hb(t,x).
\end{aligned}
\]
The smooth theory in Appendix \ref{app:convergence} predicts
\[
c_h(\alpha_0;t,x)
=
\frac h2(\alpha_0-1)
\kappa_v(t+h,\overline x_h)
+O(h^2).
\]
Because
\[
f''(x)=3x^2-1,
\]
the terminal objective is locally concave near the central barrier and
locally convex near the two wells. The continuation value preserves this
sign separation over the parameter regime used below. Hence hot execution is
predicted to be favorable near the barrier, whereas cold execution is
predicted to be favorable near a well.

We use $\varepsilon=0.2$, $q_{\mathrm{prop}}=0.25$, $N=256$, $M=48$, and compare $\alpha_0\in\{0.25,0.5,1,1.5,2\}$. For each value of \(\alpha_0\), we simulate the exact discrete chain
\[
X_{m+1}^i
=
X_m^i
+h b(t_m,X_m^i)
+
\sqrt{h\alpha_0q_{\mathrm{prop}}}\,Z_{m+1}^i.
\]
We consider two initial clouds: a collapsed cloud at the central barrier,
\(X_0^i=0\), and a collapsed cloud at the right well, \(X_0^i=1\). The
experiment uses 50 independent runs and common Gaussian execution noises
across the different values of \(\alpha_0\) within each run.

At every step, we evaluate the exact one-step covariance effect and form
\[
\mathcal G_{h}
=
\frac1N\sum_m\mathbb E[(-c_h)_+],
\qquad
\mathcal P_{h}
=
\frac1N\sum_m\mathbb E[(c_h)_+],
\]
as well as the matched Euler residual \(\mathcal E_h\) defined in~\Eqref{Euler_residual}. We report the signed
covariance contribution $\mathcal C_{h}
=-\mathcal G_{h}
+\mathcal P_{h}$ and the complete implementation contribution $\mathcal E_h
-\mathcal G_{h}
+\mathcal P_{h}$.

Thus the experiment separately tests whether curvature makes the covariance
term favorable and whether this gain is large enough to dominate the Euler
residual.

\begin{table}[H]
\centering
\small
\setlength{\tabcolsep}{3.5pt}
\caption{Curvature diagnostic for nonmatched execution covariance over 50 independent runs. Negative $\mathcal C=-\mathcal G+\mathcal P$ is favorable; negative $\mathcal E+\mathcal C$ means that the gain also dominates the Euler residual.}
\label{tab:unmatched-curvature-diagnostic}
\begin{tabular}{lrrrrrrr}
\toprule
Initialization & $\alpha_0$ & Final $G$ & $\mathcal G$ & $\mathcal{P}$ & $\mathcal C$ & $\mathcal E+\mathcal C$ & Sign agr. \\
\midrule
barrier & 0.25 & 0.1896 $\pm$ 0.003749 & 0.001428 & 0.05052 & 0.04909 & 0.04967 & 0.999 \\
 & 0.50 & 0.1619 $\pm$ 0.004897 & 0.00355 & 0.02788 & 0.02433 & 0.02485 & 0.999 \\
 & 1.00 & 0.1359 $\pm$ 0.005757 & 0 & 0 & 0 & 0.0005928 & -- \\
 & 1.50 & 0.1269 $\pm$ 0.006398 & 0.0192 & 0.01319 & -0.006011 & -0.005289 & 0.997 \\
 & 2.00 & 0.1267 $\pm$ 0.007468 & 0.03442 & 0.03374 & -0.00068 & 0.0001851 & 0.996 \\
\addlinespace
well & 0.25 & 0.02055 $\pm$ 0.001712 & 0.08137 & 3.023e-05 & -0.08134 & -0.07982 & 1.000 \\
 & 0.50 & 0.03843 $\pm$ 0.003048 & 0.05316 & 0.0002451 & -0.05291 & -0.05134 & 1.000 \\
 & 1.00 & 0.0705 $\pm$ 0.005405 & 0 & 0 & 0 & 0.001691 & -- \\
 & 1.50 & 0.09788 $\pm$ 0.008226 & 0.002241 & 0.05093 & 0.04869 & 0.0505 & 0.999 \\
 & 2.00 & 0.1221 $\pm$ 0.01152 & 0.006136 & 0.1011 & 0.09495 & 0.09687 & 0.999 \\
\addlinespace
\bottomrule
\end{tabular}
\end{table}
Table~\ref{tab:unmatched-curvature-diagnostic} confirms the predicted curvature-dependent reversal. From the barrier initialization, where continuation-value curvature is predominantly negative, hot execution (\(\alpha_0>1\)) produces a favorable negative covariance contribution; at \(\alpha_0=1.5\), this gain also dominates the Euler residual. From the well initialization, where curvature is predominantly positive, cold execution (\(\alpha_0<1\)) is favorable and substantially lowers the final objective, whereas hot execution is unfavorable. The sign prediction agrees with the exact one-step effect in at least \(99.6\%\) of evaluated cases. 
\subsection{Full SCMO pseudocode}
% ============================================================
% ============================================================

% ============================================================

\label{app:full-algorithm}
\begin{algorithm}[H]
\caption{Multi-context SCMO with nonmatched covariance}
\label{alg:scmo_compact}
\begin{algorithmic}[1]
\footnotesize
\setlength{\abovedisplayskip}{2pt}
\setlength{\belowdisplayskip}{2pt}
\setlength{\abovedisplayshortskip}{1pt}
\setlength{\belowdisplayshortskip}{1pt}

\State \textbf{Input:}
Galerkin dimension \(K\); Galerkin objective \(G_K\); particle number \(N\); 
horizon \(T\); inner steps \(M\); outer loops \(L\);
context clouds \(R\); candidates per context \(S\);
temperature \(\varepsilon>0\);
proposal scale \(\sigma_{\mathrm{prop}}>0\); execution scale \(\sigma_{\mathrm{dyn}}>0\); and
\[
\Lambda
=
\operatorname{diag}(\lambda_1,\ldots,\lambda_K),
\]
with
\[
Q_{\mathrm{prop}}
=
\sigma_{\mathrm{prop}}^2\Lambda,
\qquad
Q_{\mathrm{dyn}}
=
\sigma_{\mathrm{dyn}}^2\Lambda.
\]

\State Initialize
\[
\mathbf C
=
(C_1,\ldots,C_N)^\top
\in\mathbb R^{N\times K},
\qquad
\Delta t=T/M.
\]

\For{\(\ell=1,\ldots,L\)}
    \For{\(m=0,\ldots,M-1\)}
        \State Set
        \[
        t_m=m\Delta t,
        \qquad
        \tau_m=T-t_m.
        \]

        \State Draw
        \[
        \mathbf Z^{(r)}
        \sim
        \mathcal N(0,I_{N\times K}),
        \qquad
        r=1,\ldots,R,
        \]
        \qquad\quad and form the complete context clouds
        \[
        \widetilde{\mathbf C}^{(r)}
        =
        \mathbf C
        +
        \sqrt{\tau_m}\,
        \sigma_{\mathrm{prop}}
        \mathbf Z^{(r)}\Lambda^{1/2}.
        \]

        \For{\(i=1,\ldots,N\)}
            \State Draw
            \[
            \xi_{i,r,s}
            \sim
            \mathcal N(0,I_K),
            \qquad
            r=1,\ldots,R,
            \quad
            s=1,\ldots,S,
            \]
            \qquad\quad\quad\,\, and set
            \[
            Y_{i,r,s}
            =
            C_i
            +
            \sqrt{\tau_m}\,
            \sigma_{\mathrm{prop}}
            \xi_{i,r,s}\Lambda^{1/2}.
            \]

            \State Compute the complete-cloud replacement scores
            \[
            \mathcal E_{i,r,s}
            =
            N\,G_K\!\left(
            \frac1N\delta_{Y_{i,r,s}}
            +
            \frac1N\sum_{p\neq i}
            \delta_{\widetilde C_p^{(r)}}
            \right).
            \]

            \State Let
            \[
            \mathcal E_i^{\min}
            =
            \min_{\substack{1\leq r\leq R\\1\leq s\leq S}}
            \mathcal E_{i,r,s},
            \]
            \qquad\quad\quad\,\, and define the globally normalized Gibbs weights
            \[
            w_{i,r,s}
            =
            \frac{
            \exp\!\left(
            -(\mathcal E_{i,r,s}-\mathcal E_i^{\min})/\varepsilon
            \right)
            }{
            \displaystyle
            \sum_{q=1}^{R}\sum_{j=1}^{S}
            \exp\!\left(
            -(\mathcal E_{i,q,j}-\mathcal E_i^{\min})/\varepsilon
            \right)
            }.
            \]

            \State Estimate the feedback drift
            \[
            \widehat\theta_i
            =
            \frac1{\tau_m}
            \sum_{r=1}^{R}\sum_{s=1}^{S}
            w_{i,r,s}
            (Y_{i,r,s}-C_i).
            \]
        \EndFor

        \State Form the simultaneous drift update
        \[
        \overline C_i
        =
        C_i+\Delta t\,\widehat\theta_i,
        \qquad
        i=1,\ldots,N,
        \]
        \qquad\quad and write
        \[
        \overline{\mathbf C}
        =
        (\overline C_1,\ldots,\overline C_N)^\top.
        \]

        \State Draw an execution noise
        \[
        \boldsymbol\eta_m
        \sim
        \mathcal N(0,I_{N\times K}),
        \]
        \qquad\quad independently of all context and candidate noises, and update
        \[
        \mathbf C
        \leftarrow
        \overline{\mathbf C}
        +
        \sqrt{\Delta t}\,
        \sigma_{\mathrm{dyn}}
        \boldsymbol\eta_m\Lambda^{1/2}.
        \]
    \EndFor
\EndFor

\State \textbf{Output:}
\[
\mu^N
=
\frac1N\sum_{i=1}^{N}\delta_{u^{(i)}},
\qquad
u^{(i)}
=
\sum_{k=1}^{K}C_{i,k}e_k.
\]

\end{algorithmic}
\end{algorithm}

\subsection{A diagnostic for the number of context clouds}
\label{app:context-cloud-quadratic}

We illustrate the error induced by a single complete context cloud using an
interacting quadratic measure objective. Let $\kappa > 0$, $G(\mu_{\mathbf x}^N)
=
\frac{\kappa}{2}
(
\frac1N\sum_{i=1}^N x_i-a
)^2$, and define the scaled terminal cost
$
F_N(\mathbf x)
=
N G(\mu_{\mathbf x}^N)
=
\frac{\kappa}{2N}
(
\sum_{i=1}^N x_i-Na
)^2$. Unlike an additive empirical average, this objective couples every particle
through the empirical mean.

Let $\tau,q >0$, under the Gaussian reference proposal
$Y_i = x_i+\sqrt{\tau q}\,Z_i$, $Z_i\overset{\mathrm{i.i.d.}}{\sim}\mathcal N(0,1)$,
the exact full-cloud Gibbs feedback is
\[
b_i(\mathbf x)
=
\frac1{\tau}
\frac{
\mathbb E[
(Y_i-x_i)e^{-F_N(\mathbf Y)/\varepsilon}
]
}{
\mathbb E[
e^{-F_N(\mathbf Y)/\varepsilon}
]
}
=
\frac{\kappa q}
{\varepsilon+\kappa q\tau}
\left(
a-\overline x
\right),
\]
where $\overline x
=
\frac1N\sum_{i=1}^N x_i$. Consider now the population limit of the one-context estimator. Fix one
Gaussian realization
$
\widetilde Y_j
=
x_j+\sqrt{\tau q}\,Z_j$, $j\neq i$, for the remaining particles and let the number of candidates tend to infinity.
The resulting conditional feedback is
$b_i^{(1)}
\left(
\mathbf x;
\widetilde{\mathbf Y}_{-i}
\right)
=
\frac{\kappa q}
{N\varepsilon+\kappa q\tau}
\left(
Na-x_i-\sum_{j\neq i}\widetilde Y_j
\right)$. Thus, even when the candidate expectation is evaluated exactly, the feedback
remains dependent on one random context. Taking expectation over this context
gives
$
\mathbb E[b_i^{(1)}]
=
\frac{N\kappa q}
{N\varepsilon+\kappa q\tau}
\left(
a-\overline x
\right)$. Consequently,
$
\frac{\mathbb E[b_i^{(1)}]}{b_i}
=
\frac{
N(\varepsilon+\kappa q\tau)
}{
N\varepsilon+\kappa q\tau
}$, which approaches \(N\) as \(\varepsilon\downarrow0\). Increasing the number
of candidates \(S\) therefore does not remove the one-context error; it only
approximates the random conditional feedback more accurately.

For \(R\) complete contexts and \(S\) candidates per context, define $L_{i,r,s}
=
\exp\left(
-\frac{\mathcal E_{i,r,s}}{\varepsilon}
\right)$, $
U_{i,r,s}
=
Y_{i,r,s}-x_i$, with $\mathcal E_{i,r,s}$ as defined in the~\Eqref{eq:main-finite-candidate-update}. The globally normalized estimator can be written as
$
\widehat b_i^{R,S}
=
\frac1{\tau}
\frac{
\sum_{r=1}^R\sum_{s=1}^S
L_{i,r,s}U_{i,r,s}
}{
\sum_{r=1}^R\sum_{s=1}^S
L_{i,r,s}
}$. For each context, let
$
A_{i,r}^{S}
=
\frac1S\sum_{s=1}^S L_{i,r,s}U_{i,r,s}$, $B_{i,r}^{S}
=
\frac1S\sum_{s=1}^S L_{i,r,s}$. The pairs
$(A_{i,r}^{S},B_{i,r}^{S})$ are independent and identically distributed over \(r\), with expectations
equal to the numerator and denominator of the full-cloud feedback.
Consequently, for every fixed \(S\geq1\),
$
\widehat b_i^{R,S}
\longrightarrow
b_i,\text{ almost surely as }R\to\infty$. This consistency requires the normalization to be performed jointly over all
\(RS\) context--candidate pairs.

We test this effect using $N=8,\,
\kappa=q=\tau=1,\,
\varepsilon=0.1,\,
a=0$, with initial empirical mean \(\overline x_0=0.5\). The exact initial feedback
is $b_i(\mathbf x_0)=-0.454545$, whereas the expected population one-context feedback is $\mathbb E[b_i^{(1)}(\mathbf x_0)]
=
-2.222222$. The total score budget is fixed at $RS=256$, and the experiment is repeated over \(50\) independent runs for
\(20\) updates. The initial objective is \(0.125\).

\begin{table}[H]
\centering
\small
\setlength{\tabcolsep}{4pt}
\caption{
Interacting-quadratic diagnostic with fixed context--candidate budget
\(RS=256\). Final objectives are mean \(\pm\) standard deviation.
}
\label{tab:context-cloud-quadratic}
\begin{tabular}{rrrrcc}
\toprule
\(R\)
&
\(S\)
&
\makecell{Mean feedback\\estimate}
&
\makecell{Feedback\\RMSE}
&
\makecell{Final\\objective}
&
\makecell{\(\Pr(G\leq10^{-2})\)}
\\
\midrule
1  & 256 & \(-1.7684\) & \(1.6139\) &
\(0.18222\pm0.17646\) & \(0.16\)
\\
2  & 128 & \(-1.3169\) & \(1.2310\) &
\(0.10094\pm0.12844\) & \(0.22\)
\\
4  & 64  & \(-0.8812\) & \(0.7917\) &
\(0.02242\pm0.03610\) & \(0.60\)
\\
8  & 32  & \(-0.6242\) & \(0.4493\) &
\(0.00522\pm0.00774\) & \(0.86\)
\\
16 & 16  & \(-0.5148\) & \(0.2442\) &
\(0.00209\pm0.00295\) & \(0.96\)
\\
32 & 8   & \(-0.4816\) & \(0.1525\) &
\(0.00141\pm0.00198\) & \(1.00\)
\\
\bottomrule
\end{tabular}
\end{table}

The one-context method is worse on average than the initial cloud, despite
using all \(256\) candidates in that single context. Reallocating the same
score budget over several independent contexts substantially improves both
the feedback approximation and the final optimization result. The choices
\(R=4\) and \(R=8\) already remove most of the one-context instability while
retaining respectively \(64\) and \(32\) candidates per context. This
provides a computational rationale for using several complete context clouds
rather than one context with a very large candidate pool.
\paragraph{Finite-candidate rate diagnostic.}
To complement the fixed-budget allocation study in Table~3, we
directly examine the mean-square rate in Proposition~C.1 using the
same interacting-quadratic objective, for which the exact full-cloud
feedback is available in closed form. We set
\[
N=8,\qquad
\kappa=0.02,\qquad
q=\tau=1,\qquad
\varepsilon=0.1,\qquad
a=0,\qquad
\bar x=0.5.
\]
Thus,
\[
b_i(x)
=
\frac{\kappa q}{\varepsilon+\kappa q\tau}(a-\bar x)
=
-\frac{1}{12}.
\]
For each
\(S\in\{1,4,16,64\}\), we vary
\(R\in\{8,16,32,64,128,256,512\}\) and estimate
\[
\mathbb{E}\!\left[
  \left|\widehat b_i^{R,S}(x)-b_i(x)\right|^2
\right]
\]
over \(10{,}000\) independent repetitions. As shown in
Figure~\ref{fig:finite-candidate-rate}, the empirical MSE decreases
approximately proportionally to \(R^{-1}\) for every fixed \(S\).
The fitted log--log slopes over \(R\geq128\) are
\(-0.991\), \(-0.996\), \(-1.005\), and \(-0.997\) for
\(S=1,4,16,64\), respectively, consistent with the
\(O(R^{-1}+(RS)^{-1})\) upper rate.

\begin{figure}[H]
  \centering
  \includegraphics[width=0.72\linewidth]
  {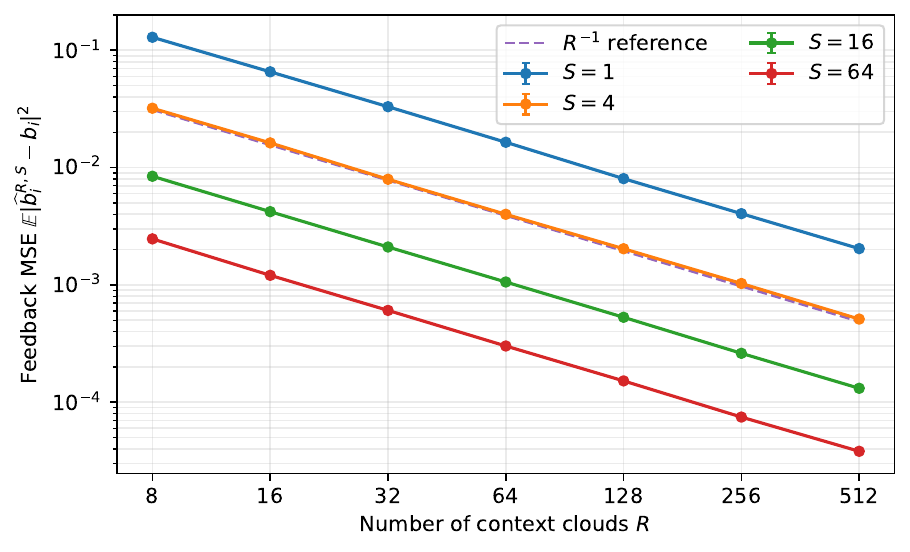}
  \caption{
  Finite-candidate rate diagnostic for the interacting-quadratic
  objective. Points show the empirical feedback MSE over
  \(10{,}000\) independent repetitions, with \(95\%\) Monte Carlo
  error bars. For each fixed candidate count \(S\), the MSE follows
  the predicted \(R^{-1}\) decay. Increasing \(S\) lowers the
  within-context candidate contribution, while increasing \(R\)
  controls the context-cloud contribution.
  }
  \label{fig:finite-candidate-rate}
\end{figure}
\section{Convergence Analysis for matched covariance problems}
\label{matched_problem}
In this section, we give detailed convergence analysis on matched covariance problems, i.e., $Q_{prop}=Q_{dyn}$. Recall the settings in Section \ref{sec:scmo}.
\paragraph{Algorithmic approximation.}
We provide two levels of convergence analysis. The first is a PDE-free qualitative result under weak recovery assumptions. It does not use the Master equation and can cover nonsmooth or discontinuous objectives. The second is a quantitative error decomposition for the idealized continuous-time projected particle value. This sharper result requires a verified classical regularized value function and analytic bounds. Neither result is a convergence theorem for the finite-candidate, time-discretized SCMO implementation.

\subsection{Convergence at the qualitative level}
\label{app:proof-qualitative}
The qualitative result uses only the finite-dimensional risk-sensitive representation and a robust recovery condition.  It does not require a Master equation, Lions differentiability, or convexity.

\begin{ass}[Robust recovery for possibly discontinuous objectives]
\label{ass:robust-recovery}
Assume that $G:\cP_2(\cH)\to\bR$ is Borel measurable and bounded from below, and that
\[
    -\infty<\mathcal V_0:=\inf_{\mu\in\cP_2(\cH)}G(\mu)<\infty .
\]
For every $\eta>0$ and every $K_0,N_0\ge1$, assume that there exist integers $K\ge K_0$, $N\ge N_0$, a cloud $\mathbf z\in(\cH_K)^N$, and a radius $r>0$ such that
\[
    \sup_{\mathbf y\in B_r(\mathbf z)}G(\mu_{\mathbf y}^N)
    \le
    \mathcal V_0+\eta.
\]
\end{ass}

\begin{thm}[PDE-free convergence under robust recovery]
\label{thm:qualitative-robust-recovery}
Assume that $Q$ is trace class with strictly positive eigenvalues and that Assumption~\ref{ass:robust-recovery} holds.  Let $v_\varepsilon^{K,N}$ be the projected particle value with terminal cost $NG_K(\mu_{\mathbf x}^N)$.  Then there exist sequences $K_m,N_m\to\infty$ and $\varepsilon_m\downarrow0$ such that
\[
    \frac1{N_m}v_{\varepsilon_m}^{K_m,N_m}(0,\Pi_{K_m}\mathbf x_0)
    \longrightarrow
    \mathcal V_0.
\]
\end{thm}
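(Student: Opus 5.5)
We prove matching lower and upper bounds for $\frac1N v_\varepsilon^{K,N}(0,\Pi_K\mathbf x_0)$, working only with the risk-sensitive representation
\[
v_\varepsilon^{K,N}(0,\mathbf x)=-\varepsilon\log\E\Bigl[\exp\Bigl(-\tfrac N\varepsilon G_K(\mu^N_{\mathbf x+\Delta\mathbf W_1^K})\Bigr)\Bigr].
\]
This formula holds by the Boué--Dupuis / Donsker--Varadhan variational formula for Gaussian measures on $(\cH_K)^N$. It requires only measurability and boundedness from below of $G$, so no PDE is needed; if one prefers, it can be taken as the definition of the projected particle value, consistent with \Eqref{Feynman Kac}.

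\textbf{Lower bound.} Since $G_K(\nu)=G((\iota_K)_\#\nu)\ge\mathcal V_0$ for every empirical measure, the exponent is bounded above by $-N\mathcal V_0/\varepsilon$. Hence $\frac1Nv_\varepsilon^{K,N}(0,\cdot)\ge\mathcal V_0$ for all $K,N,\varepsilon$.

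\textbf{Upper bound.} Fix $\eta=1/m$ and $K_0=N_0=m$. Assumption~\ref{ass:robust-recovery} gives $K_m\ge m$, $N_m\ge m$, a cloud $\mathbf z\in(\cH_{K_m})^{N_m}$ and a radius $r>0$ with $G\le\mathcal V_0+1/m$ on $B_r(\mathbf z)$. Restrict the expectation to the event
\[
A=\{\Pi_{K_m}\mathbf x_0+\Delta\mathbf W_1^{K_m}\in B_r(\mathbf z)\}.
\]
On $A$ the terminal cost is at most $N_m(\mathcal V_0+1/m)$, so
\[
\frac1{N_m}v_\varepsilon^{K_m,N_m}(0,\Pi_{K_m}\mathbf x_0)\le \mathcal V_0+\frac1m-\frac{\varepsilon}{N_m}\log\bP(A).
\]
The key point is that $\bP(A)>0$. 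Indeed, $\Delta\mathbf W_1^{K_m}$ is a nondegenerate Gaussian on the finite-dimensional space $(\cH_{K_m})^{N_m}$, because $Q_{K_m}$ has the strictly positive eigenvalues $\lambda_1,\dots,\lambda_{K_m}$. Its density is therefore positive everywhere, and it charges every nonempty open ball regardless of where $\Pi_{K_m}\mathbf x_0$ lies. Set $p_m:=\bP(A)>0$. This quantity depends on $m$ but not on $\varepsilon$. Now choose $\varepsilon_m\in(0,1/m)$ small enough that $\varepsilon_m\log(1/p_m)\le N_m/m$. Combining with the lower bound,
\[
\mathcal V_0\le\frac1{N_m}v_{\varepsilon_m}^{K_m,N_m}(0,\Pi_{K_m}\mathbf x_0)\le\mathcal V_0+\frac2m,
\]
and we may additionally arrange $\varepsilon_m$ decreasing, which proves the claim.

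\textbf{Main obstacle.} The delicate step is justifying the exponential representation without smoothness. One must show that the value defined via controls in \eqref{eq:projected_particle_value} equals $-\varepsilon\log\E[\exp(-F/\varepsilon)]$ for a merely measurable $F$ that is bounded below. I would invoke the Boué--Dupuis formula directly: it is valid for bounded measurable functionals, and it extends to functionals bounded below by monotone truncation $F\wedge n$, passing $n\to\infty$ on both sides. Alternatively, one can avoid the representation entirely in the upper bound by using an explicit control. For instance, the control $\theta$ equal to the Gibbs-tilted (Föllmer) drift toward the conditioned law on $A$ has cost $\varepsilon\,\mathrm{KL}=\varepsilon\log(1/p_m)$ and terminal cost at most $N_m(\mathcal V_0+1/m)$ almost surely. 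The lower bound needs no representation at all, since the terminal cost is always at least $N\mathcal V_0$ and the energy term is nonnegative.
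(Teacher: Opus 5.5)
Your proof is correct and follows the paper's argument: a lower bound from $G\ge\mathcal V_0$, and an upper bound obtained by restricting the risk-sensitive expectation to the ball $B_r(\mathbf z)$, which has positive mass under the nondegenerate Gaussian. You then take $\varepsilon$ small relative to $N/\log(1/p)$ and diagonalize with $\eta=1/m$, $K_0=N_0=m$. You also add two details the paper leaves implicit: a justification of the representation for merely measurable, bounded-below $G$ (Bou\'e--Dupuis with truncation, or the explicit F\"ollmer control with cost $\varepsilon\log(1/p)$), and an explicit arrangement that $\varepsilon_m\downarrow 0$.
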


\begin{proof}
For fixed $K,N$, let $\gamma_{K,N}$ be the law of $\Pi_K\mathbf x_0+\Delta\mathbf W_1^K$ on $(\cH_K)^N$, and define
\[
    f_{K,N}(\mathbf z):=G_K(\mu_{\mathbf z}^N).
\]
The finite-dimensional risk-sensitive representation gives
\[
    \frac1N v_\varepsilon^{K,N}(0,\Pi_K\mathbf x_0)
    =
    -\frac\varepsilon N
    \log\int
    \exp\left(-\frac N\varepsilon f_{K,N}(\mathbf z)\right)\gamma_{K,N}(d\mathbf z).
\]
Because $G\ge \mathcal V_0$, the right-hand side is bounded below by $\mathcal V_0$.

Fix $\eta>0$ and choose $K_0,N_0$.  By robust recovery, choose $K\ge K_0$, $N\ge N_0$, $\mathbf z\in(\cH_K)^N$, and $r>0$ such that
\[
    G_K(\mu_{\mathbf y}^N)\le \mathcal V_0+\eta,
    \qquad \mathbf y\in B_r(\mathbf z).
\]
Since $\gamma_{K,N}$ has full support on $(\cH_K)^N$,
\[
    p:=\gamma_{K,N}(B_r(\mathbf z))>0.
\]
Therefore
\[
    \int
    \exp\left(-\frac N\varepsilon f_{K,N}(\mathbf y)\right)\gamma_{K,N}(d\mathbf y)
    \ge
    p\exp\left(-\frac N\varepsilon(\mathcal V_0+\eta)\right).
\]
It follows that
\[
    \frac1N v_\varepsilon^{K,N}(0,\Pi_K\mathbf x_0)
    \le
    \mathcal V_0+\eta-\frac\varepsilon N\log p.
\]
Choose $\varepsilon>0$ small enough so that $-(\varepsilon/N)\log p\le\eta$.  Then
\[
    \mathcal V_0
    \le
    \frac1N v_\varepsilon^{K,N}(0,\Pi_K\mathbf x_0)
    \le
    \mathcal V_0+2\eta.
\]
Taking $\eta=\eta_m\downarrow0$ and $K_0=N_0=m$ at the $m$-th step gives the desired diagonal sequence.
\end{proof}

\begin{cor}[Upper semicontinuous objectives]
\label{cor:usc}
Assume that $G:\cP_2(\cH)\to\bR$ is bounded from below and upper semicontinuous with respect to $\mathcal{W}_2$.  Suppose $-\infty<\mathcal V_0<\infty$ and that for every $\eta>0$ there exists $\mu_\eta\in\cP_2(\cH)$ with
\[
    G(\mu_\eta)\le \mathcal V_0+\eta.
\]
Then Assumption~\ref{ass:robust-recovery} holds, and hence the conclusion of Theorem~\ref{thm:qualitative-robust-recovery} follows.
\end{cor}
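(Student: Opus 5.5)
We verify Assumption~\ref{ass:robust-recovery} directly. Fix \(\eta>0\) and \(K_0,N_0\ge1\). First pick \(\mu_\eta\) with \(G(\mu_\eta)\le\mathcal V_0+\eta/2\). Upper semicontinuity of \(G\) at \(\mu_\eta\) then gives \(\delta>0\) such that \(G(\nu)\le\mathcal V_0+\eta\) whenever \(\mathcal W_2(\nu,\mu_\eta)<\delta\). Everything now reduces to exhibiting a whole Euclidean ball of clouds \(\mathbf y\in(\cH_K)^N\), with \(K\ge K_0\) and \(N\ge N_0\), whose empirical measures (embedded in \(\cH\)) lie inside this \(\mathcal W_2\)-ball.

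\textbf{Step 1 (projection).} Set \(\nu_K:=(\Pi_K)_\#\mu_\eta\). Couple \(X\) with \(\Pi_KX\) to get
\[
\mathcal W_2(\nu_K,\mu_\eta)^2\le \E_{\mu_\eta}\|(I-\Pi_K)X\|^2.
\]
Since \(\|(I-\Pi_K)x\|\to0\) pointwise and is dominated by \(\|x\|^2\in L^1(\mu_\eta)\), this tends to \(0\) by dominated convergence. Here we use that \(\{e_j\}\) is an orthonormal basis of \(\cH\): \(Q\) is positive definite and compact, so its eigenvectors span \(\cH\). Choose \(K\ge K_0\) with \(\mathcal W_2(\nu_K,\mu_\eta)<\delta/3\).

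\textbf{Step 2 (empiricalization).} On the finite-dimensional space \(\cH_K\), empirical measures of i.i.d.\ samples converge to \(\nu_K\) in \(\mathcal W_2\) almost surely, by Varadarajan's theorem together with the strong law applied to second moments. Hence some realization \(\mathbf z\in(\cH_K)^N\) with \(N\ge N_0\) satisfies \(\mathcal W_2(\mu^N_{\mathbf z},\nu_K)<\delta/3\). Alternatively, a deterministic quantization argument works: truncate to a large ball, then use rational atom weights with a common denominator \(N\).

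\textbf{Step 3 (robustness).} For \(\mathbf y\in(\cH_K)^N\), the index coupling gives
\[
\mathcal W_2(\mu^N_{\mathbf y},\mu^N_{\mathbf z})^2\le \tfrac1N\sum_i\|y_i-z_i\|^2=\tfrac1N\|\mathbf y-\mathbf z\|^2 .
\]
Take \(r=\sqrt N\,\delta/3\). Then every \(\mathbf y\in B_r(\mathbf z)\) satisfies \(\mathcal W_2(\mu^N_{\mathbf y},\mu_\eta)<\delta\) by the triangle inequality. Therefore \(\sup_{B_r(\mathbf z)}G(\mu^N_{\mathbf y})\le\mathcal V_0+\eta\), and \(G_K\) agrees with \(G\) on these embedded empirical measures.

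This establishes Assumption~\ref{ass:robust-recovery}, and Theorem~\ref{thm:qualitative-robust-recovery} then applies verbatim. The only delicate point is Step~2, specifically ensuring that \(N\) can be taken arbitrarily large (\(\ge N_0\)) while staying \(\mathcal W_2\)-close. The almost-sure convergence statement handles this directly, since it holds for all sufficiently large \(N\). Step~1 needs only \(\mu_\eta\in\cP_2(\cH)\), which holds by definition.
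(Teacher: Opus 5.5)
Your proof is correct and follows essentially the same route as the paper's: approximate $\mu_\eta$ in $\mathcal W_2$ by an empirical measure with atoms in a Galerkin subspace, then combine continuity of $\mathbf y\mapsto(\iota_K)_\#\mu^N_{\mathbf y}$ with upper semicontinuity to obtain a whole ball of clouds. The differences are minor. You prove the density step explicitly (projection, then almost-sure $\mathcal W_2$ convergence of empirical measures, which also gives $N\ge N_0$ cleanly), and you use upper semicontinuity once, at $\mu_\eta$, with the explicit radius $r=\sqrt N\,\delta/3$. The paper instead cites density and invokes upper semicontinuity a second time, at $(\iota_K)_\#\mu^N_{\mathbf z}$.
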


\begin{proof}
Fix $\eta>0$ and $K_0,N_0\ge1$.  Choose $\mu_\eta$ with $G(\mu_\eta)\le \mathcal V_0+\eta/3$.  Empirical measures with atoms in finite-dimensional Galerkin subspaces are dense in $\cP_2(\cH)$ under $\mathcal{W}_2$.  Hence there exist $K\ge K_0$, $N\ge N_0$, and $\mathbf z\in(\cH_K)^N$ such that $\mathcal{W}_2((\iota_K)_\#\mu_{\mathbf z}^N,\mu_\eta)$ is sufficiently small.  By upper semicontinuity,
\[
    G_K(\mu_{\mathbf z}^N)=G((\iota_K)_\#\mu_{\mathbf z}^N)
    \le
    G(\mu_\eta)+\eta/3
    \le
    \mathcal V_0+2\eta/3.
\]
The map $\mathbf y\mapsto (\iota_K)_\#\mu_{\mathbf y}^N$ is continuous from $(\cH_K)^N$ to $\cP_2(\cH)$.  Applying upper semicontinuity once more, there exists $r>0$ such that
\[
    \sup_{\mathbf y\in B_r(\mathbf z)}G_K(\mu_{\mathbf y}^N)
    \le
    \mathcal V_0+\eta.
\]
This is robust recovery.
\end{proof}

\begin{cor}[Indicator objectives with robust margins]
\label{cor:indicator}
Let $H:\cP_2(\cH)\to\bR$ be bounded below and upper semicontinuous.  Suppose
\[
    G(\mu)=H(\mu)+\sum_{\ell=1}^{L_b}a_\ell\1_{\mathcal B_\ell}(\mu)-\sum_{m=1}^{L_g}b_m\1_{\mathcal A_m}(\mu),
    \qquad a_\ell,b_m\ge0,
\]
where each bad-event set $\mathcal B_\ell$ is closed and each good-event set $\mathcal A_m$ is open in $\mathcal{W}_2$.  Then $G$ is upper semicontinuous. 
\end{cor}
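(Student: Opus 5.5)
The plan is to prove the claim term by term, since a finite sum of upper semicontinuous functions into \(\bR\) is upper semicontinuous. \(H\) is upper semicontinuous by hypothesis. Scaling by a nonnegative constant preserves upper semicontinuity, so it suffices to show two facts about indicators: \(\1_{\mathcal B}\) is upper semicontinuous for closed \(\mathcal B\), and \(-\1_{\mathcal A}\) is upper semicontinuous for open \(\mathcal A\).

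I will use the characterization that \(f\) is upper semicontinuous iff every superlevel set \(\{f\ge c\}\) is closed in \((\cP_2(\cH),\mathcal W_2)\).

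\textbf{Closed bad events.} The superlevel sets of \(\1_{\mathcal B}\) are
\[
\{\1_{\mathcal B}\ge c\}=
\begin{cases}
\cP_2(\cH), & c\le0,\\
\mathcal B, & 0<c\le1,\\
\emptyset, & c>1.
\end{cases}
\]
Each of these is closed.

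\textbf{Open good events.} Write \(-\1_{\mathcal A}=\1_{\mathcal A^c}-1\). Since \(\mathcal A^c\) is closed, the previous case shows \(\1_{\mathcal A^c}\) is upper semicontinuous, and subtracting the constant \(1\) preserves this.

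\textbf{Conclusion.} For the sum, take \(\mu_n\to\mu\) in \(\mathcal W_2\). Then \(\limsup_n\sum_k f_k(\mu_n)\le\sum_k\limsup_n f_k(\mu_n)\le\sum_k f_k(\mu)\), because all terms are real-valued and the sum is finite. This gives upper semicontinuity of \(G\).

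\(G\) is also bounded below, since the indicator terms are bounded by \(\sum_m b_m\). Therefore Corollary~\ref{cor:usc} applies directly, whenever near-minimizers exist, which holds automatically by definition of the infimum if \(\mathcal V_0\) is finite.

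There is no real obstacle here. The only point needing care is the sign convention: the good events enter with a minus sign, which is exactly why openness, not closedness, is the right hypothesis for \(\mathcal A_m\).
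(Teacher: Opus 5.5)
Your proof is correct and is essentially the argument the paper intends; the paper states this corollary without a written proof. Your term-by-term verification fills that gap: indicators of closed sets and negatives of indicators of open sets are upper semicontinuous, and nonnegative scalings and finite sums of real-valued upper semicontinuous functions remain upper semicontinuous. Your side remark is also right: $G\ge\inf H-\sum_m b_m$ is bounded below, so Corollary~\ref{cor:usc} applies.
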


\subsection{Convergence at the quantitative level}
\label{app:quantitative-assumptions}
For any probability measures $\mu$, we write the second moment of $\mu$ as $M_2(\mu):=\int \|x\|^2\mu(dx)$. For $\nu\in\cP_2(\cH_K)$, define the projected regularized value
\begin{equation}
\label{eq:projected-mf-value}
    V_\varepsilon^K(t,\nu)
    :=
    \inf_{\theta\in\Theta_K}
    \left\{
    G_K(\Law_{\cH_K}(X_1^{K,t,\xi;\theta}))+\E\left[\frac\varepsilon2\int_t^1\|\theta_s\|_{Q_K}^2\,ds\right]
    \right\},
\end{equation}
where $\Law_{\cH_K}(\xi)=\nu$, the controls are adapted and $\cH_K$-valued, and
\[
    X_s^{K,t,\xi;\theta}
    =
    \xi+\int_t^s\theta_r\,dr+W_s^K-W_t^K.
\]
Here $W^K:=\Pi_KW$ is a $Q_K$-Wiener process on $\cH_K$, and
\[
    \|h\|_{Q_K}:=\|Q_K^{-1/2}h\|_{\cH_K}.
\]
Formally, $V_\varepsilon^K$ solves the projected Master equation on $\cP_2(\cH_K)$:
\begin{equation}
\label{eq:projected-master}
\partial_tV_\varepsilon^K(t,\nu)
+
\frac12\int_{\cH_K}\Tr\left(Q_K\partial_x\partial_\mu V_\varepsilon^K(t,\nu)(x)\right)\nu(dx)
-
\frac1{2\varepsilon}\int_{\cH_K}\|Q_K^{1/2}\partial_\mu V_\varepsilon^K(t,\nu)(x)\|^2\nu(dx)=0,
\end{equation}
with terminal condition $V_\varepsilon^K(1,\nu)=G_K(\nu)$.

For $\mathbf x\in(\cH_K)^N$, define
\begin{equation}
\label{eq:projected-particle-value}
    v_\varepsilon^{K,N}(t,\mathbf x)
    :=
    \inf_{\theta^1,\ldots,\theta^N}
    \E\left[
        NG_K(\mu^N_{\mathbf X_1^K})
        +
        \frac\varepsilon2\sum_{i=1}^N\int_t^1\|\theta_s^i\|_{Q_K}^2\,ds
    \right],
\end{equation}
where
\[
    X_s^{K,i}=x_i+\int_t^s\theta_r^i\,dr+W_s^{K,i}-W_t^{K,i},
\]
with independent $Q_K$-Wiener processes $W^{K,1},\ldots,W^{K,N}$.  The projected particle HJB is
\begin{equation}
\label{eq:projected-particle-hjb}
\partial_t v_\varepsilon^{K,N}
+
\frac12\sum_{i=1}^N\Tr\left(Q_KD_{x_i}^2v_\varepsilon^{K,N}\right)
-
\frac1{2\varepsilon}\sum_{i=1}^N\|Q_K^{1/2}D_{x_i}v_\varepsilon^{K,N}\|^2
=0,
\end{equation}
with terminal condition $v_\varepsilon^{K,N}(1,\mathbf x)=NG_K(\mu_{\mathbf x}^N)$.
The Cole--Hopf transform gives the risk-sensitive representation
\[
    v_\varepsilon^{K,N}(t,\mathbf x)
    =
    -\varepsilon\log\E\left[
    \exp\left(-\frac{N}{\varepsilon}G_K(\mu^N_{\mathbf x+\Delta\mathbf W_{1-t}^K})\right)
    \right].
\]

Our proof has two layers. First, $N^{-1}v_\varepsilon^{K,N}$ approximates $V_\varepsilon^K$ as $N\to\infty$ for fixed $K$ and $\varepsilon$.  Second, $V_\varepsilon^K(0,\delta_{\Pi_Kx_0})$ is compared directly with $\mathcal V_0$.

\begin{ass}[Projected classical solution, growth, and particle residual]
\label{ass:projected-classical}
For the quantitative results in this section, assume that the terminal objective $G$ is bounded below, continuous and has at most quadratic growth:
\begin{equation}
\label{eq:G-quadratic-growth}
    |G(\mu)|
    \le
    C_G\left(1+M_2(\mu)\right),
    \qquad \mu\in\cP_2(\cH).
\end{equation}
For every $\varepsilon>0$ and $K\ge1$, we assume that the projected Master equation (\ref{eq:projected-master}) admits a solution $V_\varepsilon^K$ on $[0,1]\times\cP_2(\cH_K)$, continuous up to $t=1$ with terminal value $G_K$.  On $[0,1)\times\cP_2(\cH_K)$, the derivatives
\[
    \partial_tV_\varepsilon^K,
    \qquad
    \partial_\mu V_\varepsilon^K,
    \qquad
    \partial_x\partial_\mu V_\varepsilon^K,
    \qquad
    \partial_{\mu\mu}^2V_\varepsilon^K
\]
exist in the classical Lions sense needed below. On top of that, we assume the Carmona--Delarue chain-rule regularity on $\cP_2(\cH_K)$: the versions of $\partial_\mu V_\varepsilon^K$ and $\partial_x\partial_\mu V_\varepsilon^K$ are continuous at support points of the measure argument and locally bounded on bounded subsets of $[0,1)\times\cP_2(\cH_K)\times\cH_K$.

We also assume the following growth and local Lipschitz bounds: There exists $C_{\varepsilon,K}<\infty$ such that, for all $t\in[0,1)$,
\begin{equation}
\label{eq:V-derivative-linear-growth}
    \|\partial_\mu V_\varepsilon^K(t,\nu)(x)\|
    +
    \|\partial_x\partial_\mu V_\varepsilon^K(t,\nu)(x)\|_{\op}
    \le
    C_{\varepsilon,K}\left(1+\|x\|+M_2(\nu)^{1/2}\right).
\end{equation}
Moreover, we suppose that for every $R<\infty$ and every $\tau<1$, there is a constant
$C_{\varepsilon,K,R,\tau}<\infty$ such that, for all $t\in[0,\tau]$, all $x,x'\in\cH_K$ with
$\|x\|\vee\|x'\|\le R$, and all $\nu,\nu'\in\cP_2(\cH_K)$ with
$M_2(\nu)\vee M_2(\nu')\le R$,
\begin{equation}
\label{eq:partial-mu-local-lipschitz}
    \|\partial_\mu V_\varepsilon^K(t,\nu)(x)
    -\partial_\mu V_\varepsilon^K(t,\nu')(x')\|
    \le
    C_{\varepsilon,K,R,\tau}
    \left(\|x-x'\|+\mathcal{W}_{2}(\nu,\nu')\right).
\end{equation}
Finally, we assume that there exists $L_{\varepsilon,K}<\infty$ such that
\begin{equation}
\label{eq:L-eps-K}
    \left|
    \int_{\cH_K}\Tr\left(Q_K\partial_{\mu\mu}^2V_\varepsilon^K(t,\nu)(y)(y)\right)\nu(dy)
    \right|
    \le L_{\varepsilon,K},
    \qquad (t,\nu)\in[0,1)\times\cP_2(\cH_K).
\end{equation}
\end{ass}

\begin{lem}[Projected verification]
\label{lem:projected-verification}
Under Assumptions~\ref{ass:projected-classical}, the classical solution $V_\varepsilon^K$ coincides with the projected value function defined in \Eqref{eq:projected-mf-value}.  In particular, it is unique within this verified classical class.
\end{lem}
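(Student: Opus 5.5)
The proof is a standard verification argument: an Itô-type chain rule on $\cP_2(\cH_K)$ gives the upper bound $V_\varepsilon^K\le$ cost for every admissible control, and a candidate optimal feedback built from $\partial_\mu V_\varepsilon^K$ gives equality. Fix $(t,\nu)\in[0,1)\times\cP_2(\cH_K)$ and $\xi$ with $\Law_{\cH_K}(\xi)=\nu$.

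\emph{Step 1 (upper bound).} Take any admissible $\theta\in\Theta_K$ and let $\nu_s:=\Law_{\cH_K}(X_s^{K,t,\xi;\theta})$. Since the noise $W^K$ is idiosyncratic, there is no common noise and $s\mapsto \nu_s$ is deterministic. Apply the Carmona--Delarue chain rule for flows of marginal laws of Itô processes on the finite-dimensional space $\cH_K$ to $s\mapsto V_\varepsilon^K(s,\nu_s)$ on $[t,\tau]$ for $\tau<1$. This gives
\[
V_\varepsilon^K(\tau,\nu_\tau)-V_\varepsilon^K(t,\nu)=\int_t^\tau\Big(\partial_sV_\varepsilon^K+\E\big[\langle\partial_\mu V_\varepsilon^K(s,\nu_s)(X_s),\theta_s\rangle+\tfrac12\Tr(Q_K\partial_x\partial_\mu V_\varepsilon^K(s,\nu_s)(X_s))\big]\Big)ds.
\]
The assumed continuity and local boundedness of $\partial_\mu V_\varepsilon^K$ and $\partial_x\partial_\mu V_\varepsilon^K$ justify the chain rule. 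The linear growth bound \eqref{eq:V-derivative-linear-growth}, combined with $\E\sup_s\|X_s\|^2<\infty$ (from finite control energy and $\Tr Q_K<\infty$), makes every expectation finite. Substitute $\partial_sV_\varepsilon^K$ from the projected Master equation \eqref{eq:projected-master}. Then use the pointwise inequality
\[
\langle p,\theta\rangle+\tfrac\varepsilon2\|\theta\|_{Q_K}^2\ge-\tfrac1{2\varepsilon}\|Q_K^{1/2}p\|^2,
\]
which holds with equality iff $\theta=-\varepsilon^{-1}Q_Kp$. This yields
\[
V_\varepsilon^K(t,\nu)\le V_\varepsilon^K(\tau,\nu_\tau)+\E\tfrac\varepsilon2\int_t^\tau\|\theta_s\|_{Q_K}^2ds.
\]
Let $\tau\uparrow1$. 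The map $s\mapsto\nu_s$ is $\mathcal W_2$-continuous, and $V_\varepsilon^K$ is continuous up to $t=1$ with terminal value $G_K$. Hence $V_\varepsilon^K(\tau,\nu_\tau)\to G_K(\nu_1)$, so $V_\varepsilon^K(t,\nu)$ is at most the value function in \eqref{eq:projected-mf-value}.

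\emph{Step 2 (optimal feedback).} Define the feedback $\theta^*(s,x):=-\varepsilon^{-1}Q_K\partial_\mu V_\varepsilon^K(s,\mu_s)(x)$, and solve the McKean--Vlasov SDE
\[
dX_s=\theta^*(s,X_s)\,ds+dW^K_s,\qquad \mu_s=\Law(X_s).
\]
On each $[t,\tau]$ with $\tau<1$, the local Lipschitz bound \eqref{eq:partial-mu-local-lipschitz} in $(x,\nu)$ and the linear growth \eqref{eq:V-derivative-linear-growth} give a unique strong solution. The proof uses a fixed point in $C([t,\tau];\cP_2(\cH_K))$: first solve with a localization/truncation at radius $R$, then use linear growth and Gronwall to bound second moments uniformly and remove the truncation. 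Running Step 1 along this control gives equality on $[t,\tau]$. Consequently the energy satisfies
\[
\E\int_t^\tau\|\theta^*_s\|_{Q_K}^2ds\le\tfrac2\varepsilon\big(V_\varepsilon^K(t,\nu)-\inf G\big),
\]
using that $G$ is bounded below. This bound is uniform in $\tau$.

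\emph{Step 3 (passing to $\tau=1$) is the main obstacle.} The bounds are only local in time on $[0,1)$. First, I would extend $\theta^*$ to $[t,1)$ by consistency of the solutions on nested intervals. The uniform energy bound of Step 2 then puts $\theta^*$ in $\Theta_K$, and $X_s$ has a limit in $L^2$ as $s\uparrow1$. Quadratic growth \eqref{eq:G-quadratic-growth} and continuity of $G$ give $V_\varepsilon^K(\tau,\mu_\tau)\to G_K(\mu_1)$. Therefore $V_\varepsilon^K(t,\nu)$ equals the cost of $\theta^*$, which is at least the infimum in \eqref{eq:projected-mf-value}; combined with Step 1 this proves equality. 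Uniqueness within the class follows immediately: any two classical solutions satisfying Assumption~\ref{ass:projected-classical} both equal the same value function.
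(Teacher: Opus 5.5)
Your Step 1 and the uniqueness remark follow the paper's argument: chain rule, substitution of the projected Master equation, completion of the square, then $\tau\uparrow1$ using continuity of $V_\varepsilon^K$ up to $t=1$. The gap is in how you show the closed-loop feedback is admissible on all of $[t,1)$.

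In Step 2 you write $\frac\varepsilon2\E\int_t^\tau\|\theta^*_s\|_{Q_K}^2ds=V_\varepsilon^K(t,\nu)-V_\varepsilon^K(\tau,\mu_\tau)$ and bound this by $V_\varepsilon^K(t,\nu)-\inf G$. That bound needs $V_\varepsilon^K(\tau,\mu_\tau)\ge\inf G$ at an interior time $\tau<1$, and nothing available at this stage gives it. The lower bound on $G$ only controls the terminal value. Also, $V_\varepsilon^K$ is not yet known to be the value function: Step 1 gives only $V_\varepsilon^K\le$ value, which is the wrong direction. Proving $V_\varepsilon^K(\tau,\cdot)\ge\inf G$ would require running the optimal feedback from $(\tau,\mu_\tau)$ up to time $1$. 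That is exactly the admissibility you are trying to establish, so the "uniform energy bound" that Step 3 relies on is circular as written.

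The missing observation is that the obstacle you identify in Step 3 does not exist. Only the Lipschitz bound \eqref{eq:partial-mu-local-lipschitz} is local in time. The growth bound \eqref{eq:V-derivative-linear-growth} holds with a single constant $C_{\varepsilon,K}$ for all $t\in[0,1)$. The argument then runs as follows:
\begin{enumerate}
\item The closed-loop drift $-\varepsilon^{-1}Q_K\partial_\mu V_\varepsilon^K(s,\mu_s)(x)$ has linear growth in $\|x\|$ and $M_2(\mu_s)^{1/2}$ uniformly up to $s=1$.
\item Gronwall then gives $\sup_{t\le s<1}\E\|X_s\|^2<\infty$.
\item Since $\|Q_Kp\|_{Q_K}=\|Q_K^{1/2}p\|\le\|Q_K\|_{\op}^{1/2}\|p\|$, the energy $\E\int_t^1\|\theta^*_s\|_{Q_K}^2ds$ is finite directly.
\end{enumerate}
This is how the paper concludes. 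With that in hand, your nested-interval extension, the $L^2$ limit $X_1$, and the passage $\tau\uparrow1$ all go through, and no lower bound on $V_\varepsilon^K$ is needed.

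A smaller point: in Step 3, the convergence $V_\varepsilon^K(\tau,\mu_\tau)\to G_K(\mu_1)$ comes from the assumed continuity of $V_\varepsilon^K$ up to $t=1$ together with $\mathcal W_2(\mu_\tau,\mu_1)\to0$, as you correctly used in Step 1. It does not follow from the quadratic growth and continuity of $G$, which say nothing about $V_\varepsilon^K(\tau,\cdot)$ for $\tau<1$.
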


\begin{proof}
Fix an admissible projected control $\theta$, and set
\[
    X_s:=X_s^{K,t,\xi;\theta},
    \qquad
    \nu_s:=\Law_{\cH_K}(X_s).
\]
By Assumptions~\ref{ass:projected-classical}, the Carmona--Delarue chain rule applies to $s\mapsto V_\varepsilon^K(s,\nu_s)$ in expectation.  Using the projected Master equation (\ref{eq:projected-master}), we obtain, for $r<1$,
\begin{align*}
V_\varepsilon^K(r,\nu_r)-V_\varepsilon^K(t,\nu)
={}&
\int_t^r
\Bigg\{
\frac1{2\varepsilon}
\E\left[
\left\|Q_K^{1/2}\partial_\mu V_\varepsilon^K(s,\nu_s)(X_s)\right\|^2
\right]
\nonumber\\
&\hspace{2.5cm}
+
\E\left[
\left\langle
\partial_\mu V_\varepsilon^K(s,\nu_s)(X_s),
\theta_s
\right\rangle
\right]
\Bigg\}ds.
\end{align*}
Letting $r\uparrow1$ and using $V_\varepsilon^K(1,\nu_1)=G_K(\nu_1)$ gives
\begin{align*}
&G_K(\nu_1)
+
\frac\varepsilon2\E\int_t^1\|\theta_s\|_{Q_K}^2\,ds
-
V_\varepsilon^K(t,\nu)
\\
&\qquad=
\frac\varepsilon2\E\int_t^1
\left\|
\theta_s
+
\frac1\varepsilon Q_K\partial_\mu V_\varepsilon^K(s,\nu_s)(X_s)
\right\|_{Q_K}^2ds
\ge0.
\end{align*}
Therefore every admissible control has cost at least $V_\varepsilon^K(t,\nu)$.
It remains to check that the minimizing feedback is admissible.  By
 \Eqref{eq:V-derivative-linear-growth} and \Eqref{eq:partial-mu-local-lipschitz}, the finite-dimensional closed-loop McKean--Vlasov equation
with drift
\[
    -\frac1\varepsilon Q_K\partial_\mu V_\varepsilon^K(s,\nu_s)(X_s^K)
\]
is well posed.  Moreover, \Eqref{eq:V-derivative-linear-growth} and the standard second-moment
estimate for this closed-loop equation imply
\[
    \E\int_t^1
    \left\|
    -\frac1\varepsilon Q_K\partial_\mu V_\varepsilon^K(s,\nu_s)(X_s^K)
    \right\|_{Q_K}^2ds<\infty.
\]
For this feedback, the square term vanishes.  Hence equality is attained, and $V_\varepsilon^K$ is the
projected value function.  The same verification argument applied to any other function in this
class gives uniqueness.
\end{proof}

\begin{ass}
\label{ass:particle-fk}
Assume that the spatial gradients of $v_\varepsilon^{K,N}$ are locally Lipschitz in $\mathbf x$ on compact time intervals in $[0,1)$ and have at most linear growth: there is $C_{\varepsilon,K,N}<\infty$ such that
\begin{equation}
\label{eq:vw-gradient-linear-growth}
    \sum_{i=1}^N
    \|D_{x_i}v_\varepsilon^{K,N}(t,\mathbf x)\|
    \le
    C_{\varepsilon,K,N}(1+\|\mathbf x\|),
    \qquad t\in[0,1).
\end{equation}

\end{ass}

\begin{thm}[Particle approximation after Galerkin projection]
\label{thm:particle-after-projection}
Let Assumptions~\ref{ass:projected-classical} and~\ref{ass:particle-fk} hold, and let $v_\varepsilon^{K,N}$ be a classical solution of \Eqref{eq:projected-particle-hjb}.  Then, for every $t\in[0,1]$ and $\mathbf x\in(\cH_K)^N$,
\begin{equation}
\label{eq:particle-after-projection-bound}
    \left|
    \frac1N v_\varepsilon^{K,N}(t,\mathbf x)
    -
    V_\varepsilon^K(t,\mu_{\mathbf x}^N)
    \right|
    \le
    \frac{L_{\varepsilon,K}}{2N}(1-t).
\end{equation}
\end{thm}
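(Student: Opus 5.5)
The plan is to compare the two functions through the projected particle HJB \Eqref{eq:projected-particle-hjb}, using the empirical projection of the mean-field value as a test function. Define $w(t,\mathbf x):=N\,V_\varepsilon^K(t,\mu_{\mathbf x}^N)$ on $[0,1]\times(\cH_K)^N$. The standard empirical-projection formulas for Lions derivatives give
\[
D_{x_i}w=\partial_\mu V_\varepsilon^K(t,\mu^N_{\mathbf x})(x_i),
\qquad
D^2_{x_i}w=\partial_x\partial_\mu V_\varepsilon^K(t,\mu^N_{\mathbf x})(x_i)+\frac1N\partial^2_{\mu\mu}V_\varepsilon^K(t,\mu^N_{\mathbf x})(x_i)(x_i).
\]
The chain-rule regularity in Assumption~\ref{ass:projected-classical} is what makes these identities legitimate.

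Substituting into the particle HJB and using $\int\cdot\,d\mu^N_{\mathbf x}=\frac1N\sum_i$, the terms $\partial_t w$, the $\partial_x\partial_\mu$ trace terms and the Hamiltonian reproduce exactly $N$ times the projected Master equation \Eqref{eq:projected-master} at $\nu=\mu^N_{\mathbf x}$, which vanishes. One term is left over:
\[
\partial_t w+\frac12\sum_i\Tr(Q_KD^2_{x_i}w)-\frac1{2\varepsilon}\sum_i\|Q_K^{1/2}D_{x_i}w\|^2
=\frac12\int\Tr\!\left(Q_K\partial^2_{\mu\mu}V_\varepsilon^K(t,\mu^N_{\mathbf x})(y)(y)\right)\mu^N_{\mathbf x}(dy)=:\rho(t,\mathbf x).
\]
By \Eqref{eq:L-eps-K}, $|\rho|\le L_{\varepsilon,K}/2$. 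Both $w$ and $v_\varepsilon^{K,N}$ have terminal value $NG_K(\mu^N_{\mathbf x})$.

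Next, run a verification and comparison argument in the style of Lemma~\ref{lem:projected-verification}, now on the finite-dimensional particle system.

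\textbf{Upper bound on $w-v$.} Take the optimal feedback of $v$, namely $\theta^i=-\varepsilon^{-1}Q_KD_{x_i}v_\varepsilon^{K,N}$. It is well posed and has finite energy by Assumption~\ref{ass:particle-fk}, since the gradients are locally Lipschitz with linear growth. Apply Itô's formula to $w(s,\mathbf X_s)$ on $[t,r]$ with $r<1$, and complete the square: for any control,
\[
-\frac1{2\varepsilon}\|Q_K^{1/2}Dw\|^2+\langle Dw,\theta\rangle\ge-\frac\varepsilon2\|\theta\|_{Q_K}^2 .
\]
This gives $w(t,\mathbf x)\le \E[w(r,\mathbf X_r)]+\frac\varepsilon2\E\int_t^r\|\theta\|^2_{Q_K}ds+\frac{L_{\varepsilon,K}}2(r-t)$. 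Letting $r\uparrow1$ and using that this control is optimal for $v$ (equality in $v$'s own verification), we get $w(t,\mathbf x)-v(t,\mathbf x)\le \frac{L_{\varepsilon,K}}2(1-t)$.

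\textbf{Lower bound on $w-v$.} Symmetrically, use the feedback $-\varepsilon^{-1}Q_K\partial_\mu V_\varepsilon^K(s,\mu^N_{\mathbf X_s})(X^i_s)$. It is well posed as a finite-dimensional SDE by \Eqref{eq:V-derivative-linear-growth}--\Eqref{eq:partial-mu-local-lipschitz}, because $\mathcal W_2(\mu^N_{\mathbf x},\mu^N_{\mathbf x'})\le N^{-1/2}\|\mathbf x-\mathbf x'\|$. Along it the square term for $w$ vanishes, so $w=\text{cost}-\int\rho$. Since $v$ is the infimum and satisfies its own HJB, $v(t,\mathbf x)\le\text{cost}$, which yields $v-w\le\frac{L_{\varepsilon,K}}2(1-t)$.

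Dividing by $N$ gives \Eqref{eq:particle-after-projection-bound}.

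The main obstacle is the passage $r\uparrow1$ and the true-martingale property of the stochastic integrals. Derivative bounds are only available on $[0,1)$, with constants that may blow up near $t=1$. I would first localize with stopping times at exit from balls, using the local boundedness. I would then use the linear-growth bounds \Eqref{eq:V-derivative-linear-growth} and \Eqref{eq:vw-gradient-linear-growth} together with second-moment estimates for the closed-loop SDEs to remove the localization. Finally, I would use continuity of $V_\varepsilon^K$ and $v$ up to $t=1$, together with the quadratic growth \Eqref{eq:G-quadratic-growth} and dominated convergence, to pass to the terminal values. A secondary point to check is the empirical-projection second-derivative formula, in particular the $1/N$ factor on the $\partial^2_{\mu\mu}$ term, which is exactly what produces the $1/N$ rate.
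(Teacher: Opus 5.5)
Your proof is correct, but it is organized differently from the paper's.

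\textbf{The paper's route.} The paper never uses optimality of any control. It sets $e:=v_\varepsilon^{K,N}-w$ and subtracts the two HJB relations. Using $\|a\|^2-\|b\|^2=\langle a+b,a-b\rangle$, it turns the nonlinear difference into a \emph{linear} backward equation with averaged drift $-\frac1{2\varepsilon}Q_K\bigl(D_{x_i}v_\varepsilon^{K,N}+D_{x_i}w\bigr)$ and source $-R^{K,N}$. A single Feynman--Kac representation along this control-free comparison diffusion gives the identity $e(t,\mathbf x)=\E\int_t^1R^{K,N}(s,\Xi_s)\,ds$, and both inequalities follow at once.

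\textbf{Your route.} You run two one-sided verification arguments, one along the $v$-feedback and one along the $w$-feedback, completing the square each time.

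\textbf{What each buys.} The paper's route gives an exact signed representation of the error, so a one-sided comparison comes free when the residual has a sign. It needs one diffusion and one passage $r\uparrow1$, and it uses only that $v_\varepsilon^{K,N}$ solves its PDE. The price is well-posedness of an auxiliary diffusion that has no control meaning.

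Your route uses only the two natural closed-loop systems, whose well-posedness is exactly what Assumptions~\ref{ass:projected-classical} and~\ref{ass:particle-fk} provide. Your bound $\mathcal W_2(\mu^N_{\mathbf x},\mu^N_{\mathbf x'})\le N^{-1/2}\|\mathbf x-\mathbf x'\|$ is the right way to get local Lipschitz continuity of the $w$-feedback, and the argument parallels Lemma~\ref{lem:projected-verification}. The price is that you also need verification facts for $v_\varepsilon^{K,N}$: the cost of its own feedback equals $v_\varepsilon^{K,N}$, and $v_\varepsilon^{K,N}$ is at most the cost of the $w$-feedback (via the infimum definition plus admissibility, or via It\^o on $v_\varepsilon^{K,N}$). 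So the localization and $r\uparrow1$ bookkeeping is done roughly twice. On that point, the linear-growth constants in both assumptions are uniform on $[0,1)$; only the local Lipschitz constants may degenerate as $t\uparrow1$.

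\textbf{A sign slip.} After substituting $\partial_tw+\frac12\sum_i\Tr(Q_KD^2_{x_i}w)=\rho+\frac1{2\varepsilon}\sum_i\|Q_K^{1/2}D_{x_i}w\|^2$ into It\^o's formula, the Hamiltonian term enters with a plus sign. The square to complete is $\frac1{2\varepsilon}\|Q_K^{1/2}Dw\|^2+\langle Dw,\theta\rangle+\frac\varepsilon2\|\theta\|_{Q_K}^2=\frac\varepsilon2\|\theta+\varepsilon^{-1}Q_KDw\|_{Q_K}^2\ge0$. The inequality as you displayed it, with $-\frac1{2\varepsilon}$, is false in general (take $\theta=0$). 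The bound you derive, $w(t,\mathbf x)\le \E[w(r,\mathbf X_r)]+\frac\varepsilon2\E\int_t^r\|\theta\|^2_{Q_K}ds+\frac{L_{\varepsilon,K}}2(r-t)$, is nonetheless the correct one.
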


\begin{proof}
Define $w_\varepsilon^{K,N}(t,\mathbf x):=NV_\varepsilon^K(t,\mu_{\mathbf x}^N)$.  The empirical Lions chain rule gives
\[
    D_{x_i}w_\varepsilon^{K,N}=\partial_\mu V_\varepsilon^K(t,\mu_{\mathbf x}^N)(x_i),
\]
\[
    D_{x_i}^2w_\varepsilon^{K,N}
    =
    \partial_x\partial_\mu V_\varepsilon^K(t,\mu_{\mathbf x}^N)(x_i)
    +
    \frac1N\partial_{\mu\mu}^2V_\varepsilon^K(t,\mu_{\mathbf x}^N)(x_i)(x_i).
\]
Applying the projected particle HJB operator to $w_\varepsilon^{K,N}$ gives
\begin{align*}
\mathcal L^{K,N}w_\varepsilon^{K,N}
={}&
N\Bigg[
\partial_tV_\varepsilon^K
+
\frac12\int_{\cH_K}\Tr(Q_K\partial_x\partial_\mu V_\varepsilon^K)d\mu_{\mathbf x}^N
-
\frac1{2\varepsilon}\int_{\cH_K}\|Q_K^{1/2}\partial_\mu V_\varepsilon^K\|^2d\mu_{\mathbf x}^N
\Bigg]
\\
&+
\frac12\int_{\cH_K}\Tr\left(Q_K\partial_{\mu\mu}^2V_\varepsilon^K(t,\mu_{\mathbf x}^N)(y)(y)\right)\mu_{\mathbf x}^N(dy).
\end{align*}
The bracketed term vanishes by \Eqref{eq:projected-master}.  Thus
\[
    \mathcal L^{K,N}w_\varepsilon^{K,N}=R^{K,N},
\]
where
\[
    R^{K,N}(t,\mathbf x)
    =
    \frac12\int_{\cH_K}\Tr\left(Q_K\partial_{\mu\mu}^2V_\varepsilon^K(t,\mu_{\mathbf x}^N)(y)(y)\right)\mu_{\mathbf x}^N(dy).
\]
By Assumption~\ref{ass:projected-classical}, $|R^{K,N}|\le L_{\varepsilon,K}/2$.

Let $e_\varepsilon^{K,N}:=v_\varepsilon^{K,N}-w_\varepsilon^{K,N}$.  Subtracting the HJB equations and using
$\|a\|^2-\|b\|^2=\langle a+b,a-b\rangle$ gives
\[
\partial_t e_\varepsilon^{K,N}
+
\frac12\sum_{i=1}^N\Tr(Q_KD_{x_i}^2e_\varepsilon^{K,N})
+
\sum_{i=1}^N\langle b_i(t,\mathbf x),D_{x_i}e_\varepsilon^{K,N}\rangle
=
-R^{K,N}(t,\mathbf x),
\]
with terminal $e_\varepsilon^{K,N}(1,\cdot)=0$.

Define the comparison drift
\[
    b_i(t,\mathbf x):=-\frac1{2\varepsilon}Q_K\left(D_{x_i}v_\varepsilon^{K,N}(t,\mathbf x)+D_{x_i}w_\varepsilon^{K,N}(t,\mathbf x)\right).
\]
By the preceding linear-growth and local-Lipschitz Assumptions \ref{ass:projected-classical} and \ref{ass:particle-fk}, the comparison diffusion
\[
    d\Xi_s^i=b_i(s,\mathbf\Xi_s)\,ds+dW_s^{K,i},
    \qquad \mathbf\Xi_t=\mathbf x,
\]
is nonexplosive on $[t,1]$ and satisfies the standard finite-dimensional moment bound
\[
    \mathbb{E}\sup_{s\in[t,1]}\|\mathbf\Xi_s\|^2<\infty.
\]
The common terminal condition for $v_\varepsilon^{K,N}$ and $w_\varepsilon^{K,N}$ gives the local terminal trace $e_\varepsilon^{K,N}(s,\cdot)\to0$ as $s\uparrow1$.  Together with the linear-gradient bound \Eqref{eq:vw-gradient-linear-growth}, this gives the quadratic envelope needed to pass the stopped Feynman--Kac terminal term to zero along $\mathbf\Xi$.

By Assumption~\ref{ass:particle-fk}, applying It\^o's formula to the stopped comparison diffusion and then sending the localization and terminal parameters to their limits yields
\[
    e_\varepsilon^{K,N}(t,\mathbf x)=\E\int_t^1R^{K,N}(s,\mathbf\Xi_s)\,ds.
\]
Therefore
\[
    |e_\varepsilon^{K,N}(t,\mathbf x)|
    \le
    \int_t^1\sup_{\mathbf z\in(\cH_K)^N}|R^{K,N}(s,\mathbf z)|\,ds
    \le
    \frac12L_{\varepsilon,K}(1-t).
\]
Dividing by $N$ proves the claim.
\end{proof}

\subsection{Projected regularization error}
\label{app:proof-quantitative}
\begin{ass}[Local continuity and comparator]
\label{ass:local-continuity-minimizer} Assume that the infimum $\mathcal V_0$ is attained by some $\mu^*\in\cP_2(\cH)$.  Let $X\sim\mu^*$.  Assume $G$ satisfies a local H\"older continuity condition with exponent $\alpha\in(0,1]$: there is a constant $L_G$ such that for all $\mu,\nu\in\cP_2(\cH)$,
\[
    |G(\mu)-G(\nu)|
    \le
    L_G\left(1+\mathcal{W}_2(\mu,\nu)+M_2(\mu)+M_2(\nu)\right)\mathcal{W}_2(\mu,\nu)^\alpha.
\]
For $K\ge1$, define
\[
    \Delta_K^{\mathrm{proj}}:=\E_{\mu^*}\|(I-\Pi_K)X\|^2,
\]
\[
    D_{CM,K}^2(\mu^*):=\sum_{j=1}^K\frac{\E_{\mu^*}[(X_j-x_{0,j})^2]}{\lambda_j}.
\]
\end{ass}

\begin{thm}[Projected regularization error]
\label{thm:projected-regularization-error}
Let Assumption~\ref{ass:local-continuity-minimizer} hold.  Then, for every $K\ge1$, every $\varepsilon\in(0,1]$, and every $\rho\in(0,1]$,
\begin{align}
\label{eq:projected-reg-lambda-bound}
0
\le
V_\varepsilon^K(0,\delta_{\Pi_Kx_0})-\mathcal V_0
\le{}&
C_*
\left(\rho\Tr(Q)+\Delta_K^{\mathrm{proj}}\right)^{\alpha/2}
+
\varepsilon\left[
\frac12D_{CM,K}^2(\mu^*)+\frac K2\log\frac1\rho
\right],
\end{align}
where $C_*$ is independent of $K$, $\varepsilon$, and $\rho$.  In particular, choosing $\rho=\varepsilon^{2/\alpha}$ gives
\begin{align}
\label{eq:projected-reg-eps-bound}
0
\le
V_\varepsilon^K(0,\delta_{\Pi_Kx_0})-\mathcal V_0
\le{}&
C_*
\left(\varepsilon^{2/\alpha}\Tr(Q)+\Delta_K^{\mathrm{proj}}\right)^{\alpha/2}
+
\frac\varepsilon2D_{CM,K}^2(\mu^*)
+
\frac{\varepsilon K}{\alpha}\log\frac1\varepsilon .
\end{align}
\end{thm}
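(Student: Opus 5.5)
The lower bound is immediate. For any admissible projected control, $G_K(\Law_{\cH_K}(X_1))=G((\iota_K)_\#\Law(X_1))\ge\mathcal V_0$ and the energy term is nonnegative, so $V_\varepsilon^K(0,\delta_{\Pi_Kx_0})\ge\mathcal V_0$.

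For the upper bound I would exhibit one explicit admissible control. Its terminal law should be close to $\mu^*$ in $\mathcal W_2$, and its energy should be computable through relative entropy. The starting point is the entropy identity behind the terminology in \eqref{eq:value_function_MF_eps}. The initial law is the Dirac mass $\delta_{\Pi_Kx_0}$, so the uncontrolled terminal law is the nondegenerate Gaussian $\gamma_K:=\mathcal N(\Pi_Kx_0,Q_K)$ on $\cH_K$. Let $\pi$ be any law on $\cH_K$ with $\mathrm{KL}(\pi\|\gamma_K)<\infty$. Föllmer's construction (the Doob $h$-transform with $h=d\pi/d\gamma_K$ propagated by the $Q_K$-heat semigroup) produces a progressively measurable $\cH_K$-valued drift $\theta$ with terminal law $\Law(X_1)=\pi$. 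Girsanov's theorem then gives
\[
\tfrac12\E\int_0^1\|\theta_s\|_{Q_K}^2\,ds=\mathrm{KL}(\pi\|\gamma_K).
\]
Hence $V_\varepsilon^K(0,\delta_{\Pi_Kx_0})\le G_K(\pi)+\varepsilon\,\mathrm{KL}(\pi\|\gamma_K)$ for every such $\pi$.

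I choose the comparator $\pi_\rho:=\Law(\Pi_KX+\sqrt\rho\,\zeta)$, where $X\sim\mu^*$ and $\zeta\sim\mathcal N(0,Q_K)$ is independent of $X$. The entropy is bounded in two steps.
\begin{itemize}
\item $\pi_\rho$ is the mixture over $X$ of the Gaussians $\mathcal N(\Pi_KX,\rho Q_K)$, so joint convexity of KL gives $\mathrm{KL}(\pi_\rho\|\gamma_K)\le\E\,\mathrm{KL}(\mathcal N(\Pi_KX,\rho Q_K)\|\mathcal N(\Pi_Kx_0,Q_K))$.
\item The Gaussian formula evaluates the inner term as $\tfrac12[K\rho-K+\|\Pi_K(X-x_0)\|_{Q_K}^2+K\log(1/\rho)]$. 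Taking expectations and using $\rho\le1$ yields $\mathrm{KL}(\pi_\rho\|\gamma_K)\le\tfrac12D_{CM,K}^2(\mu^*)+\tfrac K2\log\frac1\rho$.
\end{itemize}

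For the objective term, the synchronous coupling of $X$ with $\Pi_KX+\sqrt\rho\,\zeta$ gives
\[
\mathcal W_2((\iota_K)_\#\pi_\rho,\mu^*)^2\le\E\|(I-\Pi_K)X\|^2+\rho\,\Tr(Q_K)\le\Delta_K^{\mathrm{proj}}+\rho\Tr(Q).
\]
The three quantities $\mathcal W_2$, $M_2(\mu^*)$ and $M_2(\pi_\rho)\le 2M_2(\mu^*)+2\Tr(Q)$ are all bounded independently of $K,\varepsilon,\rho$. Substituting these bounds into the local Hölder condition of Assumption~\ref{ass:local-continuity-minimizer} then gives $G(\pi_\rho)-\mathcal V_0\le C_*(\rho\Tr Q+\Delta_K^{\mathrm{proj}})^{\alpha/2}$, with $C_*$ depending only on $L_G$, $M_2(\mu^*)$ and $\Tr Q$. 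Adding the entropy bound proves \eqref{eq:projected-reg-lambda-bound}. Choosing $\rho=\varepsilon^{2/\alpha}\le1$ turns $\frac{\varepsilon K}{2}\log\frac1\rho$ into $\frac{\varepsilon K}{\alpha}\log\frac1\varepsilon$, which is \eqref{eq:projected-reg-eps-bound}.

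The main obstacle is rigorously justifying the Föllmer step. One needs the drift $\theta_s=Q_K\nabla\log P_{1-s}h(X_s)$ to be admissible (progressively measurable, finite energy), and its energy to equal the terminal KL rather than merely bound it above. Only the upper bound is needed. I would first attempt the standard argument: mollify $h$, or truncate it to make it bounded and bounded away from $0$, construct the drift for the regularized density, and pass to the limit using lower semicontinuity of KL together with the continuity of $G$ along the resulting approximations.
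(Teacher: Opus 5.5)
Your proposal is correct and follows the paper's proof essentially step for step: the F\"ollmer--Girsanov bound $V_\varepsilon^K(0,\delta_{\Pi_Kx_0})\le G_K(\pi)+\varepsilon\,\mathrm{KL}(\pi\|\gamma_K)$, the Gaussian-smoothed projected comparator $\Law(\Pi_KX+\sqrt\rho\,\zeta)$, the mixture entropy estimate (your joint-convexity step is what the paper gets from data processing plus the chain rule), the synchronous coupling, and the local H\"older condition with $\rho=\varepsilon^{2/\alpha}$. One small caution on your closing remark, which goes beyond the paper (the paper simply cites the finite-dimensional F\"ollmer representation): if you justify that step by approximating $\pi$ with regularized laws $\pi_n$, lower semicontinuity of relative entropy points the wrong way, since you need $\limsup_n\mathrm{KL}(\pi_n\|\gamma_K)\le\mathrm{KL}(\pi\|\gamma_K)$; truncating $h$ to $[1/n,n]$ and renormalizing gives this by dominated convergence, because $|h_n\log h_n|\le|h\log h|+e^{-1}$.
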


\begin{proof}
Let
\[
    \gamma_{K,x_0}:=\Law_{\cH_K}(\Pi_Kx_0+W_1^K)=\mathcal N(\Pi_Kx_0,Q_K)
\]
be the uncontrolled projected terminal law.  By the finite-dimensional F\"ollmer drift representation on $\cH_K$, for every $\nu\ll\gamma_{K,x_0}$,
\begin{equation}
\label{eq:follmer-bound-projected}
    V_\varepsilon^K(0,\delta_{\Pi_Kx_0})
    \le
    G_K(\nu)+\varepsilon\mathcal H(\nu\mid\gamma_{K,x_0}),
\end{equation}
where \(\mathcal H(\nu\mid\gamma_{K,x_0})\) denotes relative entropy. Fix $\rho\in(0,1]$.  Let $X\sim\mu^*$ and let $\zeta_K\sim\mathcal N(0,Q_K)$ be independent.  Define the $\cH_K$-valued random variable
\[
    Y^{\rho,K}:=\Pi_KX+\sqrt\rho\,\zeta_K,
    \qquad
    \mu_K^\rho:=\Law_{\cH_K}(Y^{\rho,K}).
\]
Let
\[
    \widehat\mu_K^\rho:=(\iota_K)_\#\mu_K^\rho=\Law_{\cH}(Y^{\rho,K}).
\]
Conditionally on $X=x$,
\[
    Y^{\rho,K}\mid X=x\sim\mathcal N(\Pi_Kx,\rho Q_K).
\]
Since $\rho>0$ and $Q_K$ is nondegenerate on $\cH_K$, $\mu_K^\rho\ll\gamma_{K,x_0}$.

Define joint measures on $\cH\times\cH_K$ by
\[
    \eta(dx,dy):=\mu^*(dx)\,\mathcal N(\Pi_Kx,\rho Q_K)(dy),
    \qquad
    \xi(dx,dy):=\mu^*(dx)\,\mathcal N(\Pi_Kx_0,Q_K)(dy).
\]
The second marginals are $\mu_K^\rho$ and $\gamma_{K,x_0}$.  By data processing for relative entropy,
\[
    \mathcal H(\mu_K^\rho\mid\gamma_{K,x_0})
    \le
    \mathcal H(\eta\mid\xi).
\]
Since the first marginal is the same, the chain rule for relative entropy gives
\[
    \mathcal H(\eta\mid\xi)
    =
    \int_{\cH}
    \mathcal H\left(\mathcal N(\Pi_Kx,\rho Q_K)\mid\mathcal N(\Pi_Kx_0,Q_K)\right)
    \mu^*(dx).
\]
The Gaussian entropy formula gives
\[
    \mathcal H\left(\mathcal N(\Pi_Kx,\rho Q_K)\mid\mathcal N(\Pi_Kx_0,Q_K)\right)
    =
    \frac12\sum_{j=1}^K\frac{(x_j-x_{0,j})^2}{\lambda_j}
    +
    \frac K2(\rho-1-\log\rho).
\]
Since $\rho\in(0,1]$, $\rho-1-\log\rho\le\log(1/\rho)$.  Hence
\begin{equation}
\label{eq:projected-entropy-estimate}
    \mathcal H(\mu_K^\rho\mid\gamma_{K,x_0})
    \le
    \frac12D_{CM,K}^2(\mu^*)+\frac K2\log\frac1\rho.
\end{equation}

Couple $\widehat\mu_K^\rho$ with $\mu^*$ by using $(Y^{\rho,K},X)$ as $\cH$-valued random variables.  Then
\[
    \mathcal{W}_{2,\cH}^2(\widehat\mu_K^\rho,\mu^*)
    \le
    \E\|Y^{\rho,K}-X\|_{\cH}^2.
\]
Since
\[
    Y^{\rho,K}-X=-(I-\Pi_K)X+\sqrt\rho\,\zeta_K,
\]
and the two terms are orthogonal,
\begin{equation}
\label{eq:projected-coupling-estimate}
    \mathcal{W}_{2,\cH}^2(\widehat\mu_K^\rho,\mu^*)
    \le
    \Delta_K^{\mathrm{proj}}+
    \rho\Tr(Q_K)
    \le
    \Delta_K^{\mathrm{proj}}+
    \rho\Tr(Q).
\end{equation}
Also,
\[
    \int_{\cH}\|y\|^2\widehat\mu_K^\rho(dy)
    =
    \E\|Y^{\rho,K}\|^2
    \le
    2\E\|X\|^2+2\Tr(Q).
\]
Thus the local H\"older condition and optimality of $\mu^*$ imply
\begin{equation}
\label{eq:G-comparator-bound}
    G(\widehat\mu_K^\rho)-\mathcal V_0
    \le
    C_*
    \left(\rho\Tr(Q)+\Delta_K^{\mathrm{proj}}\right)^{\alpha/2}.
\end{equation}
Using \Eqref{eq:follmer-bound-projected} with $\nu=\mu_K^\rho$, and recalling that $G_K(\mu_K^\rho)=G(\widehat\mu_K^\rho)$, gives \Eqref{eq:projected-reg-lambda-bound}.  The lower bound follows from $G\ge\mathcal V_0$ and the nonnegativity of the running cost.  Setting $\rho=\varepsilon^{2/\alpha}$ gives \Eqref{eq:projected-reg-eps-bound}.
\end{proof}

\begin{thm}[Projection-first convergence of the idealized particle approximation]
\label{thm:projection-first-global}
Assume Assumptions~\ref{ass:projected-classical}, \ref{ass:particle-fk}, and~\ref{ass:local-continuity-minimizer} and $0<\varepsilon\leq 1$. Let $v_\varepsilon^{K,N}$ be a classical solution of the projected particle HJB \Eqref{eq:projected-particle-hjb}.  Then
\begin{align}
\label{eq:projection-first-final-bound}
0
\le
\frac1N v_\varepsilon^{K,N}(0,\Pi_K\mathbf x_0)-\mathcal V_0
\le{}&
\frac{L_{\varepsilon,K}}{2N}
+
C_*
\left(\varepsilon^{2/\alpha}\Tr(Q)+\Delta_K^{\mathrm{proj}}\right)^{\alpha/2}
+
\frac\varepsilon2D_{CM,K}^2(\mu^*)
+
\frac{\varepsilon K}{\alpha}\log\frac1\varepsilon .
\end{align}
Here $\Pi_K\mathbf x_0=(\Pi_Kx_0,\ldots,\Pi_Kx_0)$.
\end{thm}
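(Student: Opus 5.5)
The bound follows by splitting the error with the triangle inequality at the intermediate projected mean-field value $V_\varepsilon^K(0,\delta_{\Pi_Kx_0})$:
\[
\frac1N v_\varepsilon^{K,N}(0,\Pi_K\mathbf x_0)-\mathcal V_0
=
\Bigl[\frac1N v_\varepsilon^{K,N}(0,\Pi_K\mathbf x_0)-V_\varepsilon^K(0,\delta_{\Pi_Kx_0})\Bigr]
+
\Bigl[V_\varepsilon^K(0,\delta_{\Pi_Kx_0})-\mathcal V_0\Bigr].
\]
The first bracket is the particle error and the second is the projected regularization error.

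For the particle error, note that the empirical measure of $\Pi_K\mathbf x_0=(\Pi_Kx_0,\ldots,\Pi_Kx_0)$ is $\mu^N_{\Pi_K\mathbf x_0}=\delta_{\Pi_Kx_0}$. Theorem~\ref{thm:particle-after-projection} is available because Assumptions~\ref{ass:projected-classical} and~\ref{ass:particle-fk} hold and $v_\varepsilon^{K,N}$ is a classical solution. Applying it at $t=0$ gives
\[
\Bigl|\frac1N v_\varepsilon^{K,N}(0,\Pi_K\mathbf x_0)-V_\varepsilon^K(0,\delta_{\Pi_Kx_0})\Bigr|\le \frac{L_{\varepsilon,K}}{2N}.
\]
Lemma~\ref{lem:projected-verification} identifies the classical solution $V_\varepsilon^K$ with the control value~\eqref{eq:projected-mf-value}. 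This identification matters: Theorem~\ref{thm:projected-regularization-error} is phrased for the control value, while Theorem~\ref{thm:particle-after-projection} is phrased for the PDE solution.

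For the regularization error, Assumption~\ref{ass:local-continuity-minimizer} holds and $\varepsilon\in(0,1]$. Theorem~\ref{thm:projected-regularization-error} in the form~\eqref{eq:projected-reg-eps-bound} then bounds the second bracket by
\[
C_*\bigl(\varepsilon^{2/\alpha}\Tr(Q)+\Delta_K^{\mathrm{proj}}\bigr)^{\alpha/2}+\frac\varepsilon2 D_{CM,K}^2(\mu^*)+\frac{\varepsilon K}{\alpha}\log\frac1\varepsilon,
\]
with $C_*$ independent of $K,N,\varepsilon$. Adding this to the particle bound gives the upper inequality.

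The lower bound $0\le \frac1N v_\varepsilon^{K,N}(0,\Pi_K\mathbf x_0)-\mathcal V_0$ should not be obtained from the triangle inequality, since that would only give $-L_{\varepsilon,K}/(2N)$. It follows directly from the definition~\eqref{eq:projected-particle-value}. The running cost is nonnegative, and $G_K(\mu)=G((\iota_K)_\#\mu)\ge\mathcal V_0$ for every terminal law, so every admissible control has cost at least $N\mathcal V_0$.

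There is no real obstacle here; the only points needing care are the verification identification and proving the lower bound directly rather than from the triangle inequality.
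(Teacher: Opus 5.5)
Your proposal is correct and follows the paper's proof essentially step for step. It uses the same split at $V_\varepsilon^K(0,\delta_{\Pi_Kx_0})$, bounds the particle term by the particle-after-projection theorem at $t=0$ (using $\mu^N_{\Pi_K\mathbf x_0}=\delta_{\Pi_Kx_0}$), bounds the regularization term by the projected regularization theorem with $\rho=\varepsilon^{2/\alpha}$, and gets the lower bound directly from $G\ge\mathcal V_0$ and the nonnegative running cost; your explicit appeal to the projected verification lemma, identifying the classical Master-equation solution with the control value, is a step the paper uses but leaves implicit.
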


\begin{proof}
Since $\mu_{\Pi_K\mathbf x_0}^N=\delta_{\Pi_Kx_0}$, the triangle inequality gives
\[
\frac1N v_\varepsilon^{K,N}(0,\Pi_K\mathbf x_0)-\mathcal V_0
\le
\left|
\frac1N v_\varepsilon^{K,N}(0,\Pi_K\mathbf x_0)
-
V_\varepsilon^K(0,\delta_{\Pi_Kx_0})
\right|
+
\left(V_\varepsilon^K(0,\delta_{\Pi_Kx_0})-\mathcal V_0\right).
\]
The first term is controlled by Theorem~\ref{thm:particle-after-projection}; the second term is controlled by Theorem~\ref{thm:projected-regularization-error}.  The lower bound follows from $G\ge\mathcal V_0$ and nonnegativity of the running cost.
\end{proof}

\begin{rmk}[Asymptotic regime]
The bound in \Eqref{eq:projection-first-final-bound} tends to zero along any joint choice $K=K_m$, $N=N_m$, and $\varepsilon=\varepsilon_m$ such that
\[
    \frac{L_{\varepsilon_m,K_m}}{N_m}\to0,
    \qquad
    \Delta_{K_m}^{\mathrm{proj}}\to0,
\]
\[
    \varepsilon_mD_{CM,K_m}^2(\mu^*)\to0,
    \qquad
    \varepsilon_mK_m\log\frac1{\varepsilon_m}\to0,\qquad \varepsilon_m\to0.
\]
\end{rmk}

\begin{rmk}
Theorem~\ref{thm:projection-first-global} is not a theorem for the fully discrete finite-candidate SCMO implementation.  It concerns the idealized continuous-time projected particle value $v_\varepsilon^{K,N}$.  Finite-candidate sampling and time-discretization errors must be controlled separately.
\end{rmk}

\subsection{Optional sufficient condition for uniform particle constants}
\label{app:uniform-L}
The main convergence theorem only needs the constants $L_{\varepsilon,K}$ in Assumption~\ref{ass:projected-classical}.  The following assumptions give one sufficient route to make these constants uniform in $\varepsilon$ and $K$.  

For $\nu\in\cP_2(\cH_K)$, write
\[
    U_\varepsilon^K(t,\nu)(x):=\partial_\mu V_\varepsilon^K(t,\nu)(x).
\]
For the projected terminal functional $G_K$, define
\[
    A_\nu^K(x):=\partial_x\partial_\mu G_K(\nu)(x),
    \qquad
    B_\nu^K(x,y):=\partial_{\mu\mu}^2G_K(\nu)(x)(y).
\]

\begin{ass}[Projected FBSDE regularity and tangent equations]
\label{ass:projected-fbsde-tangent}
Fix $\varepsilon>0$ and $K\ge1$.  Assume that $U_\varepsilon^K$ is continuous on its domain and, for $t<1$, has continuous spatial derivative $\partial_xU_\varepsilon^K=\partial_x\partial_\mu V_\varepsilon^K$ and continuous Lions derivative $\partial_\mu U_\varepsilon^K=\partial_{\mu\mu}^2V_\varepsilon^K$.  On bounded subsets of $\cP_2(\cH_K)$ and $\cH_K$, these derivatives are locally Lipschitz and have at most polynomial growth.

Assume further that, for every $(t,\nu)$ with $\xi\sim\nu$, the closed-loop SDE
\[
    dX_s^{t,\xi}=-\frac1\varepsilon Q_KU_\varepsilon^K(s,\nu_s)(X_s^{t,\xi})\,ds+dW_s^K,
    \qquad
    X_t^{t,\xi}=\xi,
    \qquad
    \nu_s=\Law(X_s^{t,\xi}),
\]
has a unique square-integrable solution, and that
\[
    Y_s^{t,\xi}:=U_\varepsilon^K(s,\nu_s)(X_s^{t,\xi}),\quad Z_s^{t,\xi}:=\partial_x U^K_\varepsilon(s,\nu_s)(X_s^{t,\xi})
\]
satisfies the decoupled FBSDE
\begin{equation}
\label{eq:projected-decoupled-fbsde}
\begin{cases}
    dX_s^{t,\xi}=-\dfrac1\varepsilon Q_KY_s^{t,\xi}\,ds+dW_s^K,\qquad X_t^{t,\xi}=\xi,\\[0.4em]
    dY_s^{t,\xi}=Z_s^{t,\xi}\,dW_s^K,\\[0.4em]
    Y_1^{t,\xi}=\partial_\mu G_K(\nu_1)(X_1^{t,\xi}).
\end{cases}
\end{equation}

Also, assume that the following $L^2$ tangent equations are valid.  If $\xi^r=\xi+r\phi(\xi)$ with $\phi\in L^2(\cH_K,\nu;\cH_K)$, then the first law variation
\[
    \delta X_s:=\left.\frac{d}{dr}X_s^{t,\xi^r}\right|_{r=0},
    \qquad
    \delta Y_s:=\left.\frac{d}{dr}Y_s^{t,\xi^r}\right|_{r=0},
    \qquad
    \delta Z_s:=\left.\frac{d}{dr}Z_s^{t,\xi^r}\right|_{r=0}
\]
exists in $L^2$ and satisfies
\begin{equation}
\label{eq:projected-first-law-tangent}
\begin{cases}
    d(\delta X_s)=-\dfrac1\varepsilon Q_K\delta Y_s\,ds,
    \qquad \delta X_t=\phi(\xi),\\[0.4em]
    d(\delta Y_s)=\delta Z_s\,dW_s^K,\\[0.4em]
    \delta Y_1=A_{\nu_1}^K(X_1^{t,\xi})\delta X_1
    +\widetilde\E\!
    \left[B_{\nu_1}^K(X_1^{t,\xi},\widetilde X_1^{t,\xi})\delta\widetilde X_1\right].
\end{cases}
\end{equation}
Finally for fixed deterministic $x\in\cH_K$, let $X^{t,x}$ be the decoupled process started from $x$ and driven by the same law flow $(\nu_s)$.  Its measure-direction variation in the direction $\phi$, denoted by $(\delta_\phi X,\delta_\phi Y,\delta_\phi Z)$, exists in $L^2$ and satisfies
\begin{equation}
\label{eq:projected-measure-tangent}
\begin{cases}
    d(\delta_\phi X_s)=-\dfrac1\varepsilon Q_K\delta_\phi Y_s\,ds,
    \qquad \delta_\phi X_t=0,\\[0.4em]
    d(\delta_\phi Y_s)=\delta_\phi Z_s\,dW_s^K,\\[0.4em]
    \delta_\phi Y_1=A_{\nu_1}^K(X_1^{t,x})\delta_\phi X_1
    +\widetilde\E\!
    \left[B_{\nu_1}^K(X_1^{t,x},\widetilde X_1^{t,\xi})\delta\widetilde X_1\right].
\end{cases}
\end{equation}
All stochastic integrals appearing above are true martingales after localization and passage to the limit.  These regularity assumptions are finite-dimensional conditions on $\cH_K$; they are stated explicitly here only to justify the FBSDE differentiations used below.
\end{ass}

\begin{ass}[Projected terminal monotonicity and coercivity]
\label{ass:projected-terminal-coercivity}
There exist constants $K_G<\infty$ and $\kappa>0$, independent of $(\varepsilon,K)$, such that
\begin{equation}
\label{eq:projected-AB-bound}
    \|A_\nu^K(x)\|_{\op}+\|B_\nu^K(x,y)\|_{\op}\le K_G,
    \qquad \nu\in\cP_2(\cH_K),\ x,y\in\cH_K,
\end{equation}
\begin{align}
\label{eq:mixed_law_coercivity}
&\mathbb E\left[
\left\langle
\delta_\phi X_1,
A^K_{\nu_1}(X_1^{t,x})\delta_\phi X_1
+
\widetilde{\mathbb E}
\left[
B^K_{\nu_1}(X_1^{t,x},\widetilde X_1^{t,\xi})\delta\widetilde X_1
\right]
\right\rangle
\right]
\nonumber\\
&\qquad\qquad\ge
\kappa\mathbb E\|\delta_\phi X_1\|^2
-
K_G\mathbb E\|\delta_\phi X_1\|\,\mathbb E\|\delta X_1\|,
\end{align}
and, for every square-integrable pair $(X,\eta)$ with $\Law(X)=\nu$,
\begin{equation}
\label{eq:projected-averaged-Q-monotonicity}
\E\left[
\frac{\1_{\{\|\eta\|>0\}}}{\|\eta\|}
\left\langle
\eta,
Q_KA_\nu^K(X)\eta
+
Q_K\widetilde\E\left[B_\nu^K(X,\widetilde X)\widetilde\eta\right]
\right\rangle
\right]
\ge0,
\end{equation}
where $(\widetilde X,\widetilde\eta)$ is an independent copy of $(X,\eta)$.
\end{ass}

\begin{prop}[Projected second Lions derivative estimate]
\label{prop:uniform-L-projected}
Assume Assumptions~\ref{ass:projected-fbsde-tangent} and~\ref{ass:projected-terminal-coercivity}.  Then
\begin{equation}
\label{eq:projected-mumu-bound}
    \|\partial_{\mu\mu}^2V_\varepsilon^K(t,\nu)(x)(y)\|_{\op}
    \le
    \frac{K_G^2}{\kappa}+K_G,
    \qquad \nu\text{-a.e. }y\in\cH_K.
\end{equation}
Consequently,
\begin{equation}
\label{eq:projected-L-uniform}
    L_{\varepsilon,K}
    \le
    \Tr(Q_K)\left(\frac{K_G^2}{\kappa}+K_G\right)
    \le
    \Tr(Q)\left(\frac{K_G^2}{\kappa}+K_G\right).
\end{equation}
The bound is independent of $\varepsilon$ and $K$.
\end{prop}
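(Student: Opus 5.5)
The plan is to identify $\partial^2_{\mu\mu}V_\varepsilon^K(t,\nu)(x)(y)$ with the Lions derivative of the decoupling field $U_\varepsilon^K(t,\nu)(x)=Y_t^{t,x}$ along a perturbation of the initial law. Concretely, for $\phi\in L^2(\cH_K,\nu;\cH_K)$ and $\xi\sim\nu$, the chain rule for Lions derivatives gives
\[
\delta_\phi Y_t=\widetilde\E\big[\partial_\mu U_\varepsilon^K(t,\nu)(x)(\widetilde\xi)\,\phi(\widetilde\xi)\big],
\]
since the starting point $x$ is frozen and only the law flow moves. Hence an operator-norm bound
\[
\|\delta_\phi Y_t\|\le C\,\|\phi\|_{L^2(\nu)}
\]
for all $\phi$, combined with the continuity of $\partial_\mu U_\varepsilon^K$ from Assumption~\ref{ass:projected-fbsde-tangent}, yields the pointwise bound (\ref{eq:projected-mumu-bound}) for $\nu$-a.e.\ $y$. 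To see this, localize $\phi$ to small balls around $y$ and pass to the limit, which is a standard duality/Lebesgue-point argument. The task therefore reduces to estimating the two tangent systems (\ref{eq:projected-first-law-tangent}) and (\ref{eq:projected-measure-tangent}).

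\textbf{Step 1: the first law variation.} I would show $\E\|\delta X_1\|\le(\E\|\phi(\xi)\|^2)^{1/2}$. Apply It\^o to $\langle\delta X_s,\delta Y_s\rangle$ on $[t,1]$. Since $d\delta X=-\varepsilon^{-1}Q_K\delta Y\,ds$ and $\delta Y$ is a martingale, this gives
\[
\E\langle\delta X_1,\delta Y_1\rangle-\E\langle\phi(\xi),\delta Y_t\rangle=-\varepsilon^{-1}\E\int_t^1\|Q_K^{1/2}\delta Y_s\|^2ds\le0 .
\]
I would then control $\|\delta X_s\|$ directly. Its derivative is $-\varepsilon^{-1}\langle \delta X_s/\|\delta X_s\|,Q_K\delta Y_s\rangle$. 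The averaged $Q$-monotonicity (\ref{eq:projected-averaged-Q-monotonicity}), transported from time $1$ along the flow via the martingale property of $\delta Y$, is designed to make $s\mapsto\E\|\delta X_s\|$ nonincreasing. This gives $\E\|\delta X_1\|\le\E\|\phi(\xi)\|$. \textbf{This is the step I expect to be the main obstacle}, because the monotonicity condition is stated only at the terminal coefficients. My first attempt is to use $\delta Y_s=\E[\delta Y_1\mid\mathcal F_s]$ and condition (\ref{eq:projected-averaged-Q-monotonicity}) applied with $X=X_1$ and $\eta=\delta X_1$. I would then bound the defect through a Gr\"onwall-type argument, tracking only the sign.

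\textbf{Step 2: the measure-direction variation.} The same It\^o computation for $\langle\delta_\phi X,\delta_\phi Y\rangle$, using $\delta_\phi X_t=0$, gives
\[
\E\langle\delta_\phi X_1,\delta_\phi Y_1\rangle=-\varepsilon^{-1}\E\int_t^1\|Q_K^{1/2}\delta_\phi Y_s\|^2ds\le0.
\]
Inserting the terminal condition and the coercivity (\ref{eq:mixed_law_coercivity}) yields
\[
\kappa\E\|\delta_\phi X_1\|^2\le K_G\,\E\|\delta_\phi X_1\|\,\E\|\delta X_1\|.
\]
Hence $\E\|\delta_\phi X_1\|\le (K_G/\kappa)\E\|\delta X_1\|\le(K_G/\kappa)\|\phi\|_{L^2}$ by Step 1, using $(\E\|\cdot\|)^2\le\E\|\cdot\|^2$ where needed.

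\textbf{Step 3: conclusion.} Since $\delta_\phi Y$ is a martingale, $\delta_\phi Y_t=\E[\delta_\phi Y_1]$. The terminal condition together with (\ref{eq:projected-AB-bound}) then gives
\[
\|\delta_\phi Y_t\|\le K_G\E\|\delta_\phi X_1\|+K_G\E\|\delta X_1\|\le\Big(\frac{K_G^2}{\kappa}+K_G\Big)\|\phi\|_{L^2(\nu)},
\]
which is (\ref{eq:projected-mumu-bound}) after the localization described above. Finally, $|\Tr(Q_KM)|\le\Tr(Q_K)\|M\|_{\op}$ for positive trace-class $Q_K$, and $\Tr(Q_K)\le\Tr(Q)$. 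Together with (\ref{eq:L-eps-K}), these give (\ref{eq:projected-L-uniform}), and all constants are manifestly free of $\varepsilon$ and $K$.
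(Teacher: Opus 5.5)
Your architecture matches the paper's. You identify $\delta_\phi Y_t$ with $\widetilde\E[\partial^2_{\mu\mu}V_\varepsilon^K(t,\nu)(x)(\widetilde\xi)\phi(\widetilde\xi)]$, do Step~2 by It\^o plus mixed coercivity and Step~3 by the martingale property and terminal bounds, then use $|\Tr(Q_KM)|\le\Tr(Q_K)\|M\|_{\op}$. However, two steps fail as written.

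\textbf{First gap: you carry the wrong norm.} A bound $\|\delta_\phi Y_t\|\le C\|\phi\|_{L^2(\nu)}$ only says that the kernel $k(y):=\partial^2_{\mu\mu}V_\varepsilon^K(t,\nu)(x)(y)$ induces a bounded map $L^2(\nu)\to\cH_K$, i.e.\ essentially $\|k\|_{L^2(\nu)}\le C$. It does not give $\|k(y)\|_{\op}\le C$ for $\nu$-a.e.\ $y$. Localizing with $\phi=v\,\mathbf 1_{B_r(y)}$ yields only $\|k(y)v\|\lesssim C\,\nu(B_r(y))^{-1/2}$, which diverges as $r\downarrow0$ at non-atoms. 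Continuity of $k$ does not rescue this: for $\nu$ uniform on $[0,1]$ and $k(y)=y$, the $L^2$ bound holds with $C=1/\sqrt3$, yet $k>1/\sqrt3$ on a set of positive measure. The paper instead proves $\|\delta_\phi Y_t\|\le(K_G^2/\kappa+K_G)\,\E\|\phi(\xi)\|$ and uses $L^1$--$L^\infty$ duality (test with $\phi=v\,\mathbf 1_A$). Your Steps~1--3 actually produce $\E\|\phi(\xi)\|$, so the fix is simply not to weaken it to $\|\phi\|_{L^2(\nu)}$. Note also that reaching the diagonal $\partial^2_{\mu\mu}V_\varepsilon^K(t,\nu)(y)(y)$ in \eqref{eq:L-eps-K} requires setting $x=y$. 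Since the exceptional null set may depend on $x$, you must first use the assumed continuity of $\partial^2_{\mu\mu}V_\varepsilon^K$ to extend the bound to every $y\in\operatorname{supp}\nu$.

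\textbf{Second gap: Step~1 is left open,} and a Gr\"onwall treatment of the defect cannot close it. Because $\frac{d}{ds}\delta X_s=-\varepsilon^{-1}Q_K\delta Y_s$, any such estimate introduces an $\varepsilon^{-1}$-dependent factor, losing both the contraction constant $1$ and the $\varepsilon$-independence you need. The missing idea is an exact sign. With $n_s=\delta X_s/\|\delta X_s\|$ one has $\frac{d}{ds}n_s=-\varepsilon^{-1}\|\delta X_s\|^{-1}(I-n_sn_s^\top)Q_K\delta Y_s$, so $M_s:=\langle n_s,Q_K\delta Y_s\rangle$ satisfies
\[
dM_s=-\frac{\|Q_K\delta Y_s\|^2-\langle n_s,Q_K\delta Y_s\rangle^2}{\varepsilon\|\delta X_s\|}\,ds+\langle n_s,Q_K\delta Z_s\,dW_s^K\rangle ,
\]
whose drift is nonpositive since $I-nn^\top\succeq0$. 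Thus $M$ is a supermartingale, after localization and with $\|\cdot\|$ replaced by $(\|\cdot\|^2+\varrho^2)^{1/2}$ to handle $\delta X_s=0$. Hence $\E M_s\ge\E M_1\ge0$, the last inequality being exactly \eqref{eq:projected-averaged-Q-monotonicity} with $(X,\eta)=(X_1,\delta X_1)$. Then $\frac{d}{ds}\E\|\delta X_s\|=-\varepsilon^{-1}\E M_s\le0$ gives $\E\|\delta X_1\|\le\E\|\phi(\xi)\|$; this is the paper's Step~1. Your conditional-expectation idea leads to the same place, since $\E M_s-\E M_1=-\int_s^1\E\langle\frac{d}{dr}n_r,Q_K\delta Y_r\rangle\,dr\ge0$. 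It is this sign identity, not an estimate, that closes the step.
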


\begin{proof}
We prove only the estimate needed for $L_{\varepsilon,K}$.

\paragraph{Step 1: contraction of the first law variation.}
Let $(\delta X,\delta Y,\delta Z)$ solve \Eqref{eq:projected-first-law-tangent}.  Applying It\^o's formula to $\langle\delta X_s,Q_K\delta Y_s\rangle$ gives
\[
    d\langle\delta X_s,Q_K\delta Y_s\rangle
    =
    -\frac1\varepsilon\|Q_K\delta Y_s\|^2\,ds
    +
    \langle\delta X_s,Q_K\delta Z_s\,dW_s^K\rangle.
\]
Using the regularized norm $(\|z\|^2+\eta^2)^{1/2}$ and then sending $\eta\downarrow0$, the product rule and Cauchy--Schwarz show that
\[
    \frac{\1_{\{\|\delta X_s\|>0\}}}{\|\delta X_s\|}
    \langle\delta X_s,Q_K\delta Y_s\rangle
\]
is a supermartingale after localization.  The terminal condition and \Eqref{eq:projected-averaged-Q-monotonicity} imply that its terminal expectation is nonnegative.  Hence
\[
    \E\left[
    \frac{\1_{\{\|\delta X_s\|>0\}}}{\|\delta X_s\|}
    \langle\delta X_s,Q_K\delta Y_s\rangle
    \right]
    \ge0,
    \qquad s\in[t,1].
\]
Since
\[
    d\|\delta X_s\|
    =
    -\frac{\1_{\{\|\delta X_s\|>0\}}}{\varepsilon\|\delta X_s\|}
    \langle\delta X_s,Q_K\delta Y_s\rangle\,ds,
\]
we obtain
\begin{equation}
\label{eq:projected-first-law-contraction}
    \E\|\delta X_1\|\le \E\|\phi(\xi)\|.
\end{equation}

\paragraph{Step 2: measure-direction stability.}
Let $(\delta_\phi X,\delta_\phi Y,\delta_\phi Z)$ solve \Eqref{eq:projected-measure-tangent}.  It\^o's formula for $\langle\delta_\phi X_s,\delta_\phi Y_s\rangle$ gives
\[
    \E\langle\delta_\phi X_1,\delta_\phi Y_1\rangle
    =
    -\frac1\varepsilon\E\int_t^1\langle Q_K\delta_\phi Y_s,\delta_\phi Y_s\rangle\,ds
    \le0.
\]
Using the terminal condition in \Eqref{eq:projected-measure-tangent}, the mixed coercivity condition \Eqref{eq:mixed_law_coercivity}, and \Eqref{eq:projected-first-law-contraction},
\[
    0
    \ge
    \E\langle\delta_\phi X_1,\delta_\phi Y_1\rangle
    \ge
    \kappa\E\|\delta_\phi X_1\|^2
    -
    K_G\E\|\delta_\phi X_1\|\,\E\|\phi(\xi)\|.
\]
Therefore
\begin{equation}
\label{eq:projected-measure-stability}
    \E\|\delta_\phi X_1\|
    \le
    \frac{K_G}{\kappa}\E\|\phi(\xi)\|.
\end{equation}

\paragraph{Step 3: second Lions derivative.}
Differentiating the decoupling relation
\[
    Y_t^{t,x}=\partial_\mu V_\varepsilon^K(t,\nu)(x)
\]
in the measure direction $\phi$ gives
\[
    \left\|
    \E\left[
    \partial_{\mu\mu}^2V_\varepsilon^K(t,\nu)(x)(\xi)\phi(\xi)
    \right]
    \right\|
    =
    \|\E[\delta_\phi Y_t]\|.
\]
Since $\delta_\phi Y$ is a martingale,
\[
\begin{aligned}
    \|\E[\delta_\phi Y_t]\|
    &\le
    K_G\E\|\delta_\phi X_1\|+K_G\E\|\delta X_1\| \\
    &\le
    \left(\frac{K_G^2}{\kappa}+K_G\right)\E\|\phi(\xi)\|.
\end{aligned}
\]
The standard duality and measurable-selection argument yields
\Eqref{eq:projected-mumu-bound} for every fixed \(x\in H_K\) and
\(\nu\)-a.e. \(y\in H_K\). By the continuity of
\(\partial_{\mu\mu}^2V_\varepsilon^K\) assumed in
Assumption \ref{ass:projected-fbsde-tangent}, this estimate extends, for every fixed \(x\),
to all \(y\in\operatorname{supp}\nu\). Hence, taking \(x=y\), we obtain
\[
\left\|
\partial_{\mu\mu}^2V_\varepsilon^K(t,\nu)(y)(y)
\right\|_{\mathrm{op}}
\le
\frac{K_G^2}{\kappa}+K_G,
\qquad
y\in\operatorname{supp}\nu.
\]
Taking the \(Q_K\)-trace and integrating with respect to \(\nu\)
therefore gives \Eqref{eq:projected-L-uniform}.
\end{proof}

\subsection{Examples}

\begin{exam}[Coercive quadratic target]
Let
\[
    G_{\mathrm{quad}}(\mu)
    :=
    \frac\kappa2\int_{\cH}\|x-a\|^2\mu(dx),
    \qquad \kappa>0,
    \quad a\in\cH.
\]
Then $\mathcal V_0=0$ and the unique minimizer is $\mu^*=\delta_a$.  The derivatives are
\[
    \partial_\mu G_{\mathrm{quad}}(\mu)(x)=\kappa(x-a),
    \qquad
    \partial_x\partial_\mu G_{\mathrm{quad}}(\mu)(x)=\kappa I,
    \qquad
    \partial_{\mu\mu}^2G_{\mathrm{quad}}=0.
\]
Thus the optional uniform-$L_{\varepsilon,K}$ condition holds.  In fact, the value function is linear in the measure variable, so the sharper identity $L_{\varepsilon,K}=0$ holds.  For this example,
\[
    \Delta_K^{\mathrm{proj}}=\|(I-\Pi_K)a\|^2,
    \qquad
    D_{CM,K}^2(\delta_a)=\sum_{j=1}^K\frac{(a_j-x_{0,j})^2}{\lambda_j}.
\]
Therefore Theorem~\ref{thm:projection-first-global} gives, with $\alpha=1$ on bounded moment sets,
\[
0
\le
\frac1N v_\varepsilon^{K,N}(0,\Pi_K\mathbf x_0)
\le
C_*
\left(\varepsilon^2\Tr(Q)+\|(I-\Pi_K)a\|^2\right)^{1/2}
+
\frac\varepsilon2\sum_{j=1}^K\frac{(a_j-x_{0,j})^2}{\lambda_j}
+
\varepsilon K\log\frac1\varepsilon.
\]
\end{exam}
\section{Convergence analysis for nonmatched covariance problems}
\label{app:convergence}

\subsection{Finite-candidate consistency with nonmatched execution covariance}
\label{app:implementation-consistency}

We now connect the practical SCMO update to the matched projected particle value analyzed in Section~\ref{matched_problem}. We analyze one outer-loop rollout, consisting of $M$ updates from its current cloud. When \(L>1\), the same per-rollout analysis applies at each outer iteration; The analysis proceeds in two
stages. First, we establish consistency of the \(R\)-context,
\(S\)-candidate Monte Carlo approximation of the Cole--Hopf feedback, thereby
relating the fully finite implementation to an exact-feedback 
nonmatched-covariance chain. Second, we compare this exact-feedback chain, which is driven by the matched feedback
but executed with a covariance different from the proposal covariance,
with the matched projected particle value. The resulting covariance
contribution is retained with its sign and decomposed into a nonnegative
favorable gain and a nonnegative unfavorable contribution. In the smooth
case, its leading-order one-step effect is governed by $\frac h2
\operatorname{Tr}\left(
\left[
\mathbf Q_{K,\mathrm{dyn}}
-
\mathbf Q_{K,\mathrm{prop}}
\right]
D^2v_\varepsilon^{K,N}
\right)$. In particular, when
$\mathbf Q_{K,\mathrm{dyn}}
=
\alpha_0\mathbf Q_{K,\mathrm{prop}}$, a colder execution covariance, \(\alpha_0<1\), produces a favorable
contribution in regions of positive covariance-weighted continuation-value
curvature, whereas a hotter execution covariance, \(\alpha_0>1\), is
favorable in regions of negative weighted curvature. If the favorable
curvature regions dominate along the states visited by the chain, the
resulting gain exceeds the unfavorable covariance contribution and tightens
the matched upper bound. If this gain also dominates the Euler
discretization residual, the complete one-outer-loop bound is strictly
smaller than the ideal matched particle--projection--regularization bound.
Thus, nonmatched execution is not treated solely as an additional
error: its interaction with the curvature of the continuation value can
produce a genuine improvement in the theoretical certificate.

We recall the notation. For
\(\mathbf y=(y_1,\ldots,y_N)\in(\mathcal{H}_K)^N\), write
$
\mu_{\mathbf y}^N
=
\frac1N\sum_{i=1}^N\delta_{y_i}$, $F_{K,N}(\mathbf y)
:=
N\,G_K(\mu_{\mathbf y}^N)$. Thus \(F_{K,N}\) is the extensive terminal cost appearing in the projected
\(N\)-particle control problem. Write the full proposal covariance operator on \(\mathcal{H}\) by $Q_{\mathrm{prop}}
:=
\sigma_{\mathrm{prop}}^2 Q$, and let \(Q_{K,\mathrm{prop}}:=
\Pi_K Q_{\mathrm{prop}}\Pi_K\) be the projected covariance used in the
Cole--Hopf control problem and in the context and candidate proposals. Also write $Q_{\mathrm{dyn}}
:=
\sigma_{\mathrm{dyn}}^2 Q$ and let \(Q_{K,\mathrm{dyn}}:=\Pi_K Q_{\mathrm{dyn}}\Pi_K\) be the covariance used in the Euler
execution step. In the coefficient implementation, $Q_{K,\mathrm{prop}}
=
\sigma_{\mathrm{prop}}^2\Lambda$, $Q_{K,\mathrm{dyn}}
=
\sigma_{\mathrm{dyn}}^2\Lambda$. Since the two covariances have the same spectrum up to scale, define the
fixed variance ratio
$
\alpha_0
:=
\frac{\sigma_{\mathrm{dyn}}^2}
{\sigma_{\mathrm{prop}}^2},
\,
Q_{K,\mathrm{dyn}}
=
\alpha_0 Q_{K,\mathrm{prop}},
\,
\alpha_0\neq1$. Their \(N\)-particle block extensions are $
\mathbf Q_{K,\mathrm{prop}}
=
I_N\otimes Q_{K,\mathrm{prop}}$, $\mathbf Q_{K,\mathrm{dyn}}
=
I_N\otimes Q_{K,\mathrm{dyn}}
=
\alpha_0\mathbf Q_{K,\mathrm{prop}}$.

Throughout this subsection,
\(v_\varepsilon^{K,N}\) and \(b\) denote the matched projected particle
value and feedback associated with \(Q_{K,\mathrm{prop}}\). 

Fix \(K,N\), \(\varepsilon>0\), and \(t<1\). Let
$
\tau
=
1-t,
\,
\mathbf U
=
(U_1,\ldots,U_N)
\sim
\mathcal N(0,\tau\mathbf Q_{K,\mathrm{prop}})$, and define $L_{t,\mathbf x}(\mathbf U)
=
\exp\left(
-\frac{
F_{K,N}(\mathbf x+\mathbf U)
}{\varepsilon}
\right)$. The exact matched Cole--Hopf feedback is
\begin{equation}
\label{eq:exact-particle-feedback-fixed}
b_i(t,\mathbf x)
=
\frac1\tau
\frac{
\mathbb E\left[
L_{t,\mathbf x}(\mathbf U)U_i
\right]
}{
\mathbb E\left[
L_{t,\mathbf x}(\mathbf U)
\right]
}.
\end{equation}

\subsubsection{Multi-context finite-candidate feedback}

For each \(r=1,\ldots,R\), draw one complete context increment $\mathbf U^{(r)}
=
(U_1^{(r)},\ldots,U_N^{(r)})
\sim
\mathcal N
\left(
0,\tau\mathbf Q_{K,\mathrm{prop}}
\right)$. Conditionally on each context, draw independent candidate increments $
U_i^{(r,s)}
\sim
\mathcal N(0,\tau Q_{K,\mathrm{prop}}),\,s=1,\ldots,S$. All contexts and candidates are independent except for the shared context
within a fixed block \(r\). For the feedback of particle \(i\), use $
U_{-i}^{(r)}
=
(U_j^{(r)})_{j\neq i}$, and replace the \(i\)-th component by an independent candidate
\(U_i^{(r,s)}\).Define the complete increment
$
\mathbf U^{(i,r,s)}
=
\left(
U_1^{(r)},\ldots,U_{i-1}^{(r)},
U_i^{(r,s)},
U_{i+1}^{(r)},\ldots,U_N^{(r)}
\right)
\in(H_K)^N$ and
$
L_{i,r,s}
=
L_{t,\mathbf x}\bigl(\mathbf U^{(i,r,s)}\bigr)
=
\exp\left[
-\frac{
F_{K,N}
\left(
\mathbf x+\mathbf U^{(i,r,s)}
\right)
}{\varepsilon}
\right]$. Equivalently, \(F_{K,N}\bigl(\mathbf x+\mathbf U^{(i,r,s)}\bigr)\)
is the objective of the complete cloud obtained by perturbing every
coordinate \(j\neq i\) with the \(r\)-th context increment and replacing
the \(i\)-th coordinate by its \((r,s)\)-candidate increment. Define $\widehat b_i^{R,S}(t,\mathbf x)
=
\frac1\tau
\frac{
\sum_{r=1}^R\sum_{s=1}^S
L_{i,r,s}U_i^{(r,s)}
}{
\sum_{r=1}^R\sum_{s=1}^S
L_{i,r,s}
}$.

\begin{prop}[Consistency of the multi-context feedback estimator]
\label{prop:multicontext-consistency-fixed}
Assume
$\mathbb E\left[
L_{t,\mathbf x}(\mathbf U)
\|U_i\|
\right]
<
\infty$, $0<\mathbb E\left[
L_{t,\mathbf x}(\mathbf U)
\right]
<\infty$. Then, for every fixed \(S\geq1\),
\[
\widehat b_i^{R,S}(t,\mathbf x)
\longrightarrow
b_i(t,\mathbf x)
\qquad
\text{almost surely as }R\to\infty.
\]

For a quantitative statement, assume in addition that $\mathbb E\left[
L_{t,\mathbf x}(\mathbf U)^4
\left(
1+\|U_i\|^4
\right)
\right]
<
\infty$. Writing $B_{i,r}^S
:=
\frac1S
\sum_{s=1}^S L_{i,r,s}$,
$\overline B_i^{R,S}
:=
\frac1R
\sum_{r=1}^R B_{i,r}^S$, assume that the self-normalized denominator is \(L^2\)-stable in the
explicit sense that
\begin{equation}
\label{eq:inverse-denominator-condition-fixed}
\sup_{R,S\geq1}
\mathbb E\left[
\left(
\overline B_i^{R,S}
\right)^{-4}
\right]
<
\infty.
\end{equation}
A stronger but simpler sufficient condition is a deterministic lower bound
\(\overline B_i^{R,S}\geq c>0\) on the time--state set under
consideration. Under these assumptions,
\[
\mathbb E
\left[
\left\|
\widehat b_i^{R,S}(t,\mathbf x)
-
b_i(t,\mathbf x)
\right\|^2
\right]
\leq
C
\left(
\frac1R+\frac1{RS}
\right).
\]
The constant may depend on \(K,N,\varepsilon,t,\mathbf x\), but is
independent of \(R\) and \(S\). Consequently, under the quantitative
assumptions, convergence also holds in \(L^2\), and hence in probability,
along every sequence satisfying
\[
R_n\to\infty,
\qquad
R_nS_n\to\infty.
\]
\end{prop}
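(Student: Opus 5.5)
The plan is to write the estimator as a ratio of two block averages over contexts and apply the strong law of large numbers and a delta-method argument to that ratio. For each context $r$ set
\[
A_{i,r}^S:=\frac1S\sum_{s=1}^S L_{i,r,s}U_i^{(r,s)},\qquad B_{i,r}^S:=\frac1S\sum_{s=1}^S L_{i,r,s},
\]
so that $\widehat b_i^{R,S}=\tau^{-1}\overline A_i^{R,S}/\overline B_i^{R,S}$, with $\overline A_i^{R,S}$ and $\overline B_i^{R,S}$ the averages over $r$. The first step is the observation that $(A_{i,r}^S,B_{i,r}^S)_{r\ge1}$ are i.i.d. in $r$, since contexts and candidates are drawn independently across blocks. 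Moreover, for each fixed $(r,s)$, the complete increment $\mathbf U^{(i,r,s)}$ is $\mathcal N(0,\tau\mathbf Q_{K,\mathrm{prop}})$: its $i$-th coordinate $U_i^{(r,s)}$ is independent of $U_{-i}^{(r)}$ and has the same marginal as $U_i$. Hence $L_{i,r,s}U_i^{(r,s)}$ has the law of $L_{t,\mathbf x}(\mathbf U)U_i$, and so
\[
\mathbb E A_{i,r}^S=a:=\mathbb E[L_{t,\mathbf x}(\mathbf U)U_i],\qquad \mathbb E B_{i,r}^S=\beta:=\mathbb E[L_{t,\mathbf x}(\mathbf U)]>0 .
\]
Within a block the candidates are correlated through the shared context, but this does not matter for the means. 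The first-moment hypotheses give integrability, so the strong law (in $\mathcal H_K\times\mathbb R$) yields $\overline A_i^{R,S}\to a$ and $\overline B_i^{R,S}\to\beta$ almost surely. Continuity of $(a,\beta)\mapsto a/\beta$ at $\beta>0$ then gives $\widehat b_i^{R,S}\to b_i$ almost surely for every fixed $S$.

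For the rate, write
\[
\tau(\widehat b_i^{R,S}-b_i)=\frac{(\overline A-a)-(a/\beta)(\overline B-\beta)}{\overline B},
\]
dropping indices. By Cauchy--Schwarz, $\mathbb E\|\cdot\|^2\le \bigl(\mathbb E\|N\|^4\bigr)^{1/2}\bigl(\mathbb E\overline B^{-4}\bigr)^{1/2}$, where $N$ is the numerator. The inverse-denominator moment is bounded by \eqref{eq:inverse-denominator-condition-fixed}, so it remains to show $\mathbb E\|N\|^4\le C(R^{-1}+(RS)^{-1})^2$. Since $N$ is an average of $R$ i.i.d. centered vectors $\xi_r:=(A_r-a)-(a/\beta)(B_r-\beta)$, Rosenthal's inequality gives
\[
\mathbb E\|N\|^4\le C\bigl(R^{-2}(\mathbb E\|\xi_1\|^2)^2+R^{-3}\mathbb E\|\xi_1\|^4\bigr).
\]

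The key step, and the main obstacle, is a conditional variance decomposition of $\xi_1$ given the context $U_{-i}^{(1)}$. Conditionally on the context, the $S$ candidate terms are i.i.d. Write $m(U_{-i})$ for the conditional mean of $L U_i-(a/\beta)L$. The law of total variance then gives
\[
\mathbb E\|\xi_1\|^2=\operatorname{Var}\bigl(m(U_{-i})\bigr)+S^{-1}\mathbb E\operatorname{Var}(\,\cdot\mid U_{-i})\le C_1+C_2/S,
\]
where both constants are controlled by $\mathbb E[L^2(1+\|U_i\|^2)]$. This only yields $\mathbb E\|\xi_1\|^2\lesssim 1$, so the bound becomes $(\mathbb E\|\xi_1\|^2)/R\le C(1/R+1/(RS))$. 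The $1/R$ term comes from the context variance, which does not vanish as $S$ grows (compare the one-context bias of Appendix~\ref{app:context-cloud-quadratic}). The fourth-moment term is handled the same way: a conditional Rosenthal bound gives $\mathbb E\|\xi_1\|^4\le C$ from the assumed fourth moments $\mathbb E[L^4(1+\|U_i\|^4)]$, using Jensen to bound the $S$-averages. Its contribution $R^{-3}$ is dominated by $R^{-2}$. Taking square roots produces the claimed $C(R^{-1}+(RS)^{-1})$ with $C$ independent of $R$ and $S$.

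Finally, $L^2$ convergence implies convergence in probability along any sequence with $R_n\to\infty$. The condition $R_nS_n\to\infty$ is then automatic, which matches the statement's phrasing.
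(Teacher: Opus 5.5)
Your proposal is correct and follows the paper's proof essentially step for step: the same block averages $A_{i,r}^S,B_{i,r}^S$, the strong law plus continuous mapping for the almost-sure part, the law of total variance conditional on the context, a Rosenthal-type fourth-moment bound for the averages over independent blocks, and Cauchy--Schwarz against the inverse-denominator moment after the same ratio decomposition. The differences are cosmetic. You apply Cauchy--Schwarz to the combined numerator rather than to its two pieces separately, and you make the $R^{-2}$ versus $R^{-3}$ bookkeeping explicit. Your remark that $R_nS_n\to\infty$ is already implied by $R_n\to\infty$ is also correct.
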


\begin{proof}
Define the context-block averages $A_{i,r}^S
=
\frac1S
\sum_{s=1}^S
L_{i,r,s}U_i^{(r,s)}$, $B_{i,r}^S
=
\frac1S
\sum_{s=1}^S
L_{i,r,s}$. Then $
\widehat b_i^{R,S}(t,\mathbf x)
=
\frac1\tau
\frac{
\overline A_i^{R,S}
}{
\overline B_i^{R,S}
},
\,
\overline A_i^{R,S}
:=
\frac1R
\sum_{r=1}^R A_{i,r}^S$. The pairs $(A_{i,r}^S,B_{i,r}^S),
\,
r=1,\ldots,R$, are independent and identically distributed. Since each complete
context--candidate pair has the same law as \(\mathbf U\),
$
a_i
:=
\mathbb E[A_{i,r}^S]
=
\mathbb E\left[
L_{t,\mathbf x}(\mathbf U)U_i
\right],
\,
\beta
:=
\mathbb E[B_{i,r}^S]
=
\mathbb E\left[
L_{t,\mathbf x}(\mathbf U)
\right]
>
0$. The strong law of large numbers gives $\overline A_i^{R,S}
\longrightarrow
a_i,
\,
\overline B_i^{R,S}
\longrightarrow
\beta$ almost surely as \(R\to\infty\), for every fixed \(S\geq1\). The
continuous-mapping theorem therefore yields $\widehat b_i^{R,S}(t,\mathbf x)
\longrightarrow
\frac1\tau\frac{a_i}{\beta}
=
b_i(t,\mathbf x)
$ almost surely.

We now prove the quantitative estimate. For an \(\mathcal{H}_K\)-valued
square-integrable random variable \(Y\), use the convention $
\operatorname{Var}(Y)
:=
\mathbb E\|Y-\mathbb EY\|^2$. Let $\mathcal G_r
:=
\sigma(U_{-i}^{(r)})$. Conditionally on \(\mathcal G_r\), the variables
$
L_{i,r,s}U_i^{(r,s)},
\,
s=1,\ldots,S$, are independent and identically distributed. The law of total variance
therefore gives
\[
\begin{aligned}
\operatorname{Var}(A_{i,r}^S)
={}&
\operatorname{Var}\left(
\mathbb E[
L_{t,\mathbf x}(\mathbf U)U_i
\mid U_{-i}]
\right)
+
\frac1S
\mathbb E\left[
\operatorname{Var}\left(
L_{t,\mathbf x}(\mathbf U)U_i
\mid U_{-i}
\right)
\right],
\end{aligned}
\]
and similarly
\[
\begin{aligned}
\operatorname{Var}(B_{i,r}^S)
={}&
\operatorname{Var}\left(
\mathbb E[
L_{t,\mathbf x}(\mathbf U)
\mid U_{-i}]
\right)
+
\frac1S
\mathbb E\left[
\operatorname{Var}\left(
L_{t,\mathbf x}(\mathbf U)
\mid U_{-i}
\right)
\right].
\end{aligned}
\]
Since the blocks are independent across \(r\),
\[
\mathbb E
\left\|
\overline A_i^{R,S}-a_i
\right\|^2
\leq
C_A
\left(
\frac1R+\frac1{RS}
\right),
\]
and
\[
\mathbb E
\left|
\overline B_i^{R,S}-\beta
\right|^2
\leq
C_B
\left(
\frac1R+\frac1{RS}
\right).
\]
The fourth-moment assumption, conditional Rosenthal inequalities within
each context, and the standard fourth-moment estimate for averages of
independent blocks also give
\[
\left(
\mathbb E
\left\|
\overline A_i^{R,S}-a_i
\right\|^4
\right)^{1/2}
\leq
C_A'
\left(
\frac1R+\frac1{RS}
\right)
\]
and
\[
\left(
\mathbb E
\left|
\overline B_i^{R,S}-\beta
\right|^4
\right)^{1/2}
\leq
C_B'
\left(
\frac1R+\frac1{RS}
\right).
\]

Using
\[
\frac{\overline A_i^{R,S}}{\overline B_i^{R,S}}
-
\frac{a_i}{\beta}
=
\frac1{\overline B_i^{R,S}}
\left[
\left(
\overline A_i^{R,S}-a_i
\right)
-
\frac{a_i}{\beta}
\left(
\overline B_i^{R,S}-\beta
\right)
\right],
\]
we obtain
\[
\begin{aligned}
&
\mathbb E
\left\|
\widehat b_i^{R,S}(t,\mathbf x)
-
b_i(t,\mathbf x)
\right\|^2
\leq
\frac{2}{\tau^2}
\mathbb E\left[
\frac{
\left\|
\overline A_i^{R,S}-a_i
\right\|^2
}{
\left(
\overline B_i^{R,S}
\right)^2
}
\right]
+
\frac{2\|a_i\|^2}{\tau^2\beta^2}
\mathbb E\left[
\frac{
\left|
\overline B_i^{R,S}-\beta
\right|^2
}{
\left(
\overline B_i^{R,S}
\right)^2
}
\right].
\end{aligned}
\]
By Cauchy--Schwarz and
~\Eqref{eq:inverse-denominator-condition-fixed},
\[
\begin{aligned}
\mathbb E\left[
\frac{
\left\|
\overline A_i^{R,S}-a_i
\right\|^2
}{
\left(
\overline B_i^{R,S}
\right)^2
}
\right]
&\leq
\left(
\mathbb E
\left\|
\overline A_i^{R,S}-a_i
\right\|^4
\right)^{1/2}
\left(
\mathbb E
\left[
\left(
\overline B_i^{R,S}
\right)^{-4}
\right]
\right)^{1/2},
\end{aligned}
\]
and the same estimate applies to the denominator fluctuation. Substituting
the fourth-moment bounds proves
\[
\mathbb E
\left[
\left\|
\widehat b_i^{R,S}(t,\mathbf x)
-
b_i(t,\mathbf x)
\right\|^2
\right]
\leq
C
\left(
\frac1R+\frac1{RS}
\right).
\]
\end{proof}

\subsubsection{exact-feedback nonmatched-covariance chain and one outer-loop rollout value bound}

Let
\[
h
=
\frac1M,
\qquad
t_m
=
mh.
\]
For a deterministic state \(\mathbf x\), define the drifted state
\[
\overline{\mathbf x}_h
=
\mathbf x+h\,b(t,\mathbf x).
\]
Define the matched and execution continuation scores by
\[
\Phi_h^{v,\mathrm{prop}}(t,\mathbf x)
=
\mathbb E\left[
v_\varepsilon^{K,N}
\left(
t+h,
\overline{\mathbf x}_h
+
\sqrt h\,
\mathbf Q_{K,\mathrm{prop}}^{1/2}Z
\right)
\right],
\]
and
\[
\Phi_h^{v,\mathrm{dyn}}(t,\mathbf x)
=
\mathbb E\left[
v_\varepsilon^{K,N}
\left(
t+h,
\overline{\mathbf x}_h
+
\sqrt h\,
\mathbf Q_{K,\mathrm{dyn}}^{1/2}Z
\right)
\right],
\qquad
Z\sim\mathcal N(0,I_{NK}).
\]
The one-step covariance effect is the signed quantity
\begin{equation}
\label{eq:fixed-covariance-one-step-effect}
c_h^{K,N,\varepsilon}(t,\mathbf x)
=
\Phi_h^{v,\mathrm{dyn}}(t,\mathbf x)
-
\Phi_h^{v,\mathrm{prop}}(t,\mathbf x).
\end{equation}
To display explicitly when the nonmatched covariance improves the matched
continuation score, define the nonnegative one-step gain and unfavorable
contribution
\begin{equation}
\label{eq:fixed-covariance-gain-loss}
\begin{aligned}
g_{h}^{K,N,\varepsilon}(t,\mathbf x)
&:=
\left(
-c_h^{K,N,\varepsilon}(t,\mathbf x)
\right)_+
=
\left(
\Phi_h^{v,\mathrm{prop}}(t,\mathbf x)
-
\Phi_h^{v,\mathrm{dyn}}(t,\mathbf x)
\right)_+,
\\
p_{h}^{K,N,\varepsilon}(t,\mathbf x)
&:=
\left(
c_h^{K,N,\varepsilon}(t,\mathbf x)
\right)_+
=
\left(
\Phi_h^{v,\mathrm{dyn}}(t,\mathbf x)
-
\Phi_h^{v,\mathrm{prop}}(t,\mathbf x)
\right)_+.
\end{aligned}
\end{equation}
Then
\begin{equation}
\label{eq:fixed-covariance-effect-decomposition}
c_h^{K,N,\varepsilon}
=
p_{h}^{K,N,\varepsilon}
-
g_{h}^{K,N,\varepsilon}.
\end{equation}
Thus \(g_{h}>0\) precisely when the execution covariance
produces a smaller one-step continuation score than matched execution, while
\(p_{h}>0\) records the opposite event.

Define the matched running cost
\begin{equation}
\label{ell}
    \ell_\varepsilon^{K,N}(t,\mathbf x)
=
\frac{\varepsilon}{2}
\left\|
b(t,\mathbf x)
\right\|_{\mathbf Q_{K,\mathrm{prop}}^{-1}}^2.
\end{equation}
The matched one-step weak Euler residual is
\begin{equation}
\label{eq:matched-euler-residual-fixed}
r_h^{K,N,\varepsilon}(t,\mathbf x)
=
\Phi_h^{v,\mathrm{prop}}(t,\mathbf x)
+
h\ell_\varepsilon^{K,N}(t,\mathbf x)
-
v_\varepsilon^{K,N}(t,\mathbf x).
\end{equation}

Define the exact-feedback nonmatched-covariance chain by
\[
\mathbf X_0
=
\Pi_K\mathbf x_0,
\qquad
\overline{\mathbf X}_{m+1}
=
\mathbf X_m
+
h\,b(t_m,\mathbf X_m),
\]
and
\begin{equation}
\label{eq:exact-fixed-covariance-chain}
\mathbf X_{m+1}
=
\overline{\mathbf X}_{m+1}
+
\sqrt h\,
\mathbf Q_{K,\mathrm{dyn}}^{1/2}Z_{m+1},
\qquad
Z_{m+1}\sim\mathcal N(0,I_{NK}),
\end{equation}
with independent increments. Set
\[
\mathcal E_h^{K,N,\varepsilon}
=
\frac1N
\sum_{m=0}^{M-1}
\mathbb E\left[
\left(
r_h^{K,N,\varepsilon}(t_m,\mathbf X_m)
\right)^+
\right],
\]
\[
\mathcal G_{h}^{K,N,\varepsilon}
=
\frac1N
\sum_{m=0}^{M-1}
\mathbb E\left[
g_{h}^{K,N,\varepsilon}(t_m,\mathbf X_m)
\right],
\]
and
\[
\mathcal P_{h}^{K,N,\varepsilon}
=
\frac1N
\sum_{m=0}^{M-1}
\mathbb E\left[
p_{h}^{K,N,\varepsilon}(t_m,\mathbf X_m)
\right].
\]
The cumulative signed covariance effect is therefore
\begin{equation}
\label{eq:cumulative-fixed-covariance-effect}
\mathcal C_{h}^{K,N,\varepsilon}
:=
\frac1N
\sum_{m=0}^{M-1}
\mathbb E\left[
c_h^{K,N,\varepsilon}(t_m,\mathbf X_m)
\right]
=
-\mathcal G_{h}^{K,N,\varepsilon}
+
\mathcal P_{h}^{K,N,\varepsilon}.
\end{equation}

For later use, write the existing particle--projection--regularization bound,
with the proposal covariance used in the matched problem, as
\begin{equation}
\label{eq:R-ideal-fixed}
\begin{aligned}
\mathcal R_{K,N,\varepsilon}^{\mathrm{prop}}
:={}&
\frac{L_{\varepsilon,K}}{2N}
+
C^\star
\left(
\varepsilon^{2/\alpha}
\operatorname{Tr}(Q_{\mathrm{prop}})
+
\Delta_K^{\mathrm{proj}}
\right)^{\alpha/2}+
\frac{\varepsilon}{2}
D_{\mathrm{CM},K,\mathrm{prop}}^2(\mu^\star)
+
\frac{\varepsilon K}{\alpha}
\log\frac1\varepsilon.
\end{aligned}
\end{equation}
Here, if
\[
Q_{\mathrm{prop}}e_j
=
\lambda_{j,\mathrm{prop}}e_j,
\]
then
\[
D_{\mathrm{CM},K,\mathrm{prop}}^2(\mu^\star)
=
\sum_{j=1}^K
\frac{
\mathbb E_{\mu^\star}
\left[
(X_j-x_{0,j})^2
\right]
}{
\lambda_{j,\mathrm{prop}}
},
\qquad
X\sim\mu^\star.
\]

\begin{thm}
\label{thm:fixed-covariance-bound}
Assume the hypotheses of
Theorem~\ref{thm:projection-first-global}, with \(Q_{\mathrm{prop}}\) as the
covariance of the matched control problem, and assume that all one-step
quantities above are integrable. Then
\begin{equation}
\label{eq:fixed-covariance-bound-with-gain}
\begin{aligned}
\mathbb E
G_K(\mu_{\mathbf X_M}^N)
-
\mathcal V_0
\leq{}&
\mathcal R_{K,N,\varepsilon}^{\mathrm{prop}}
+
\mathcal E_h^{K,N,\varepsilon}
-
\mathcal G_{h}^{K,N,\varepsilon}
+
\mathcal P_{h}^{K,N,\varepsilon}.
\end{aligned}
\end{equation}
In particular, the nonmatched covariance gives a negative covariance correction to \(\mathcal R^{\rm prop}+\mathcal E_h\) whenever
\[
\mathcal G_{h}^{K,N,\varepsilon}
>
\mathcal P_{h}^{K,N,\varepsilon}.
\]
If, more strongly,
\[
\mathcal G_{h}^{K,N,\varepsilon}
>
\mathcal E_h^{K,N,\varepsilon}
+
\mathcal P_{h}^{K,N,\varepsilon},
\]
then the complete one outer-loop rollout upper bound is smaller than the ideal matched
particle--projection--regularization bound
\(\mathcal R_{K,N,\varepsilon}^{\mathrm{prop}}\).
\end{thm}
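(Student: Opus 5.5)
The proof is a telescoping argument for the extensive value $v_\varepsilon^{K,N}$ along the exact-feedback chain $\mathbf X_m$, with each increment split into a matched Euler residual and a covariance effect. Throughout, $v=v_\varepsilon^{K,N}$ is the matched value with covariance $\mathbf Q_{K,\mathrm{prop}}$.

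\textbf{Terminal and initial endpoints.} At the terminal time we use $v(1,\mathbf y)=F_{K,N}(\mathbf y)=NG_K(\mu^N_{\mathbf y})$, which holds by continuity of $v$ up to $t=1$. Since $t_M=1$, this gives
\[
N\,\mathbb E\,G_K(\mu^N_{\mathbf X_M})=\mathbb E\,v(t_M,\mathbf X_M).
\]
At the initial time, Theorem~\ref{thm:projection-first-global}, applied with $Q$ replaced by $Q_{\mathrm{prop}}$, gives
\[
\frac1N v(0,\Pi_K\mathbf x_0)\le \mathcal V_0+\mathcal R^{\mathrm{prop}}_{K,N,\varepsilon}.
\]

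\textbf{One-step decomposition.} Condition on $\mathcal F_m=\sigma(Z_1,\dots,Z_m)$. Since $Z_{m+1}$ is independent of $\mathcal F_m$ and $\mathbf X_{m+1}=\overline{\mathbf X}_{m+1}+\sqrt h\,\mathbf Q_{K,\mathrm{dyn}}^{1/2}Z_{m+1}$, the definition of the execution continuation score gives
\[
\mathbb E[v(t_{m+1},\mathbf X_{m+1})\mid\mathcal F_m]=\Phi_h^{v,\mathrm{dyn}}(t_m,\mathbf X_m).
\]
We then split this as
\[
\Phi_h^{v,\mathrm{dyn}}=\Phi_h^{v,\mathrm{prop}}+c_h
= v(t_m,\mathbf X_m)+r_h(t_m,\mathbf X_m)-h\ell_\varepsilon(t_m,\mathbf X_m)+p_h-g_h,
\]
using the definition of $r_h$ and the identity $c_h=p_h-g_h$. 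We drop $-h\ell_\varepsilon\le0$ and bound $r_h\le(r_h)_+$. Taking expectations and summing over $m=0,\dots,M-1$ telescopes to
\[
\mathbb E\,v(1,\mathbf X_M)\le v(0,\Pi_K\mathbf x_0)+N\bigl(\mathcal E_h-\mathcal G_h+\mathcal P_h\bigr).
\]
Dividing by $N$ and inserting the two endpoint relations yields the stated bound~\eqref{eq:fixed-covariance-bound-with-gain}.

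The two "in particular" statements follow by inspecting the sign of the remainder. If $\mathcal G_h>\mathcal P_h$, the covariance correction $-\mathcal G_h+\mathcal P_h$ is negative. If $\mathcal G_h>\mathcal E_h+\mathcal P_h$, the whole remainder $\mathcal E_h-\mathcal G_h+\mathcal P_h$ is negative, so the bound lies strictly below $\mathcal R^{\mathrm{prop}}$.

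\textbf{Main obstacle.} The delicate point is integrability and well-definedness along the chain. We need $\mathbb E|v(t_m,\mathbf X_m)|<\infty$ for each $m$, and we need the conditional-expectation identity, which relies on Fubini and the independence of $Z_{m+1}$ from $\mathbf X_m$. I would get these from the quadratic growth of $G$, since the risk-sensitive representation gives $v$ at most quadratic growth. I would also use the linear growth of $b=-\varepsilon^{-1}\mathbf Q_{K,\mathrm{prop}}Dv$ from Assumption~\ref{ass:particle-fk}, which gives second moments of $\mathbf X_m$ by induction. Together with the assumed integrability of the one-step quantities, this justifies every expectation used above.

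A second technical point is the last step, where $t_M=1$. There $\Phi_h$ evaluates $v$ at time $1$, so it must be read with the terminal value $F_{K,N}$. This requires continuity of $v$ up to $t=1$, which follows from the Feynman--Kac representation when $G$ is continuous.
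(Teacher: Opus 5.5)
Your proposal is correct and follows the paper's proof essentially step for step. Both use the conditional identity $\mathbb E[v(t_{m+1},\mathbf X_{m+1})\mid\mathcal F_m]=\Phi_h^{v,\mathrm{dyn}}(t_m,\mathbf X_m)$, split it as $\Phi_h^{v,\mathrm{prop}}-g_h+p_h$ together with the definition of $r_h$, telescope over $m$, drop the nonnegative running cost, and invoke Theorem~\ref{thm:projection-first-global} with $Q_{\mathrm{prop}}$. Your integrability remarks (quadratic growth of $v$ via Jensen on the risk-sensitive formula, and linear growth of $b$ from Assumption~\ref{ass:particle-fk}) are extra justification that the paper absorbs into its integrability hypothesis. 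Also, $v(1,\cdot)=F_{K,N}$ holds directly by the definition and terminal condition of the value function, so no continuity argument is needed there.
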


\begin{proof}
Conditionally on \(\mathbf X_m\),
\[
\mathbb E\left[
v_\varepsilon^{K,N}
(t_{m+1},\mathbf X_{m+1})
\mid
\mathcal F_m
\right]
=
\Phi_h^{v,\mathrm{dyn}}(t_m,\mathbf X_m).
\]
By~\Eqref{eq:fixed-covariance-one-step-effect} and
\Eqref{eq:fixed-covariance-effect-decomposition},
\[
\Phi_h^{v,\mathrm{dyn}}(t_m,\mathbf X_m)
=
\Phi_h^{v,\mathrm{prop}}(t_m,\mathbf X_m)
-
g_{h}^{K,N,\varepsilon}(t_m,\mathbf X_m)
+
p_{h}^{K,N,\varepsilon}(t_m,\mathbf X_m).
\]
Using
~\Eqref{eq:matched-euler-residual-fixed},
\[
\begin{aligned}
&
\mathbb E\left[
v_\varepsilon^{K,N}
(t_{m+1},\mathbf X_{m+1})
\mid
\mathcal F_m
\right]
+
h\ell_\varepsilon^{K,N}(t_m,\mathbf X_m)
\\
&\qquad\leq
v_\varepsilon^{K,N}(t_m,\mathbf X_m)
+
\left(
r_h^{K,N,\varepsilon}(t_m,\mathbf X_m)
\right)^+
-
g_{h}^{K,N,\varepsilon}(t_m,\mathbf X_m)
+
p_{h}^{K,N,\varepsilon}(t_m,\mathbf X_m).
\end{aligned}
\]
Taking expectations and summing over \(m=0,\ldots,M-1\) yields
\[
\begin{aligned}
&
\mathbb E
v_\varepsilon^{K,N}(1,\mathbf X_M)
+
\sum_{m=0}^{M-1}
h\,
\mathbb E
\ell_\varepsilon^{K,N}(t_m,\mathbf X_m)
\\
&\qquad\leq
v_\varepsilon^{K,N}
\bigl(0,\Pi_K\mathbf x_0\bigr)
+
\sum_{m=0}^{M-1}
\mathbb E\left[
\left(
r_h^{K,N,\varepsilon}(t_m,\mathbf X_m)
\right)^+
\right]
\\
&\qquad\quad-
\sum_{m=0}^{M-1}
\mathbb E\left[
g_{h}^{K,N,\varepsilon}(t_m,\mathbf X_m)
\right]
+
\sum_{m=0}^{M-1}
\mathbb E\left[
p_{h}^{K,N,\varepsilon}(t_m,\mathbf X_m)
\right].
\end{aligned}
\]
Since
\[
v_\varepsilon^{K,N}(1,\mathbf X_M)
=
F_{K,N}(\mathbf X_M)
=
N\,G_K(\mu_{\mathbf X_M}^N),
\]
and the running cost is nonnegative, division by \(N\), subtraction of
\(\mathcal V_0\), and Theorem~\ref{thm:projection-first-global} prove the claim.
\end{proof}

\begin{prop}[A sufficient condition for a vanishing matched Euler residual]
\label{prop:vanishing-euler-residual-fixed}
For \(t<1\) and \(\mathbf x\in(\mathcal{H}_K)^N\), define the frozen-feedback
matched process
\[
\mathbf Y_s^{t,\mathbf x}
=
\mathbf x
+
s\,b(t,\mathbf x)
+
\mathbf Q_{K,\mathrm{prop}}^{1/2}\mathbf W_s,
\qquad
0\leq s\leq h.
\]
Assume that the verification theorem for \(v_\varepsilon^{K,N}\) holds.
Then
\begin{equation}
\label{eq:frozen-feedback-euler-identity-fixed}
r_h^{K,N,\varepsilon}(t,\mathbf x)
=
\frac{\varepsilon}{2}
\mathbb E
\int_0^h
\left\|
b(t,\mathbf x)
-
b(t+s,\mathbf Y_s^{t,\mathbf x})
\right\|_{\mathbf Q_{K,\mathrm{prop}}^{-1}}^2
\,ds.
\end{equation}
In particular,
\[
r_h^{K,N,\varepsilon}(t,\mathbf x)\geq0.
\]

Consider a sequence
\((K_j,N_j,\varepsilon_j,h_j)\), and write
\[
b_j=b_{\varepsilon_j}^{K_j,N_j},
\qquad
\mathbf Q_{j,\mathrm{prop}}
=
I_{N_j}\otimes Q_{K_j,\mathrm{prop}}.
\]
Assume that there exist \(\zeta\in(0,1]\) and constants \(L_j\) such that
\[
\frac1{N_j}
\left\|
b_j(s,\mathbf y)-b_j(t,\mathbf x)
\right\|_{\mathbf Q_{j,\mathrm{prop}}^{-1}}^2
\leq
L_j^2
\left(
|s-t|^{2\zeta}
+
\frac1{N_j}
\|\mathbf y-\mathbf x\|^2
\right).
\]
Define
\[
B_j
=
\frac1{N_j}
\sum_{m=0}^{M_j-1}
h_j
\mathbb E
\left[
\left\|
b_j(t_m,\mathbf X_m)
\right\|^2
\right].
\]
Then
\[
\mathcal E_{h_j}^{K_j,N_j,\varepsilon_j}
\leq
\frac{\varepsilon_jL_j^2}{2}
\left[
\frac{h_j^{2\zeta}}{2\zeta+1}
+
\frac{h_j}{2}
\operatorname{Tr}(Q_{K_j,\mathrm{prop}})
+
\frac{h_j^2}{3}B_j
\right].
\]
Consequently,
\[
\varepsilon_jL_j^2
\left[
h_j^{2\zeta}
+
h_j\operatorname{Tr}(Q_{K_j,\mathrm{prop}})
+
h_j^2B_j
\right]
\longrightarrow0
\]
implies
\[
\mathcal E_{h_j}^{K_j,N_j,\varepsilon_j}
\longrightarrow0.
\]
\end{prop}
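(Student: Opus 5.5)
The proof splits into two parts. First we derive the frozen-feedback identity \eqref{eq:frozen-feedback-euler-identity-fixed} by a verification-type Itô computation along $\mathbf Y^{t,\mathbf x}$. Then we bound the right-hand side using the Hölder--Lipschitz hypothesis and sum along the chain.

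\textbf{Step 1: the identity.} Apply Itô's formula to $s\mapsto v_\varepsilon^{K,N}(t+s,\mathbf Y_s^{t,\mathbf x})$ on $[0,h]$, localizing and then removing the localization with the linear-growth gradient bound of Assumption~\ref{ass:particle-fk}. The generator contributes
\[
\partial_t v+\tfrac12\Tr(\mathbf Q_{K,\mathrm{prop}}D^2v)+\langle b(t,\mathbf x),Dv\rangle .
\]
The HJB \Eqref{eq:projected-particle-hjb} replaces the first two terms by $\frac1{2\varepsilon}\|\mathbf Q_{K,\mathrm{prop}}^{1/2}Dv\|^2$. At the point $(t+s,\mathbf Y_s)$ we have $Dv=-\varepsilon\mathbf Q_{K,\mathrm{prop}}^{-1}b(t+s,\mathbf Y_s)$. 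Writing $b_0=b(t,\mathbf x)$ and $b_s=b(t+s,\mathbf Y_s)$, the integrand becomes
\[
\tfrac\varepsilon2\|b_s\|_{\mathbf Q^{-1}}^2-\varepsilon\langle b_0,b_s\rangle_{\mathbf Q^{-1}}
=\tfrac\varepsilon2\|b_0-b_s\|_{\mathbf Q^{-1}}^2-\tfrac\varepsilon2\|b_0\|_{\mathbf Q^{-1}}^2 .
\]
Note that $\mathbf Y_h$ has the same law as $\overline{\mathbf x}_h+\sqrt h\,\mathbf Q_{K,\mathrm{prop}}^{1/2}Z$, so $\mathbb E\,v(t+h,\mathbf Y_h)=\Phi_h^{v,\mathrm{prop}}(t,\mathbf x)$. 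Also $h\frac\varepsilon2\|b_0\|^2=h\ell_\varepsilon^{K,N}$. Taking expectations (the martingale term vanishes by the growth bounds) and rearranging gives exactly $r_h=\frac\varepsilon2\mathbb E\int_0^h\|b_0-b_s\|^2ds\ge0$. The only delicate point is justifying Itô's formula up to $t+h\le1$ when $t+h=1$. There we use continuity of $v$ at the terminal time and pass to the limit $s\uparrow h$, as in the proof of Theorem~\ref{thm:particle-after-projection}.

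\textbf{Step 2: the rate.} Since $r_h\ge0$, we have $(r_h)^+=r_h$. Apply the hypothesis with $(s,\mathbf y)=(t+s,\mathbf Y_s)$:
\[
\tfrac1N r_h\le\tfrac{\varepsilon L^2}2\int_0^h\Big(s^{2\zeta}+\tfrac1N\mathbb E\|\mathbf Y_s-\mathbf x\|^2\Big)ds .
\]
Here $\mathbb E\|\mathbf Y_s-\mathbf x\|^2=s^2\|b(t,\mathbf x)\|^2+s\,N\Tr(Q_{K,\mathrm{prop}})$, using independence of the drift and noise and the block structure $\Tr(\mathbf Q_{K,\mathrm{prop}})=N\Tr(Q_{K,\mathrm{prop}})$. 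Integrating gives
\[
\frac{h^{2\zeta+1}}{2\zeta+1}+\frac{h^2}{2}\Tr(Q_{K,\mathrm{prop}})+\frac{h^3}{3N}\|b(t,\mathbf x)\|^2 .
\]
Evaluate this at $(t_m,\mathbf X_m)$, take expectations and sum over the $M=1/h$ steps. Using $Mh^{2\zeta+1}=h^{2\zeta}$ and $Mh^2=h$, and recognizing $\frac1N\sum_m h\,\mathbb E\|b(t_m,\mathbf X_m)\|^2=B_j$ so that the last term gives $\frac{h^2}{3}B_j$, we obtain the stated bound. The final convergence claim follows immediately.

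The main obstacle is Step 1's rigorous justification: the martingale property and the terminal behaviour of $v$ near $t=1$. Everything after that is bookkeeping.
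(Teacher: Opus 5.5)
Your proposal is correct and follows the paper's proof essentially step for step. You apply It\^o's formula along the frozen-feedback process and use the HJB completion of squares to obtain the residual identity. You then combine the H\"older--Lipschitz bound with $\mathbb E\|\mathbf Y_s-\mathbf x\|^2=s^2\|b(t,\mathbf x)\|^2+sN\Tr(Q_{K,\mathrm{prop}})$, integrate over $[0,h]$, and sum over the $M=1/h$ steps to get the stated rate; your localization and terminal-step ($t+h=1$) remarks make explicit points the paper leaves implicit.
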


\begin{proof}
For brevity, write
\[
v=v_\varepsilon^{K,N},
\qquad
\mathbf Q=\mathbf Q_{K,\mathrm{prop}}.
\]
The matched particle HJB is
\[
\partial_t v
+
\frac12
\operatorname{Tr}(\mathbf QD^2v)
-
\frac1{2\varepsilon}
\langle Dv,\mathbf QDv\rangle
=
0,
\]
and the matched feedback is
\[
b(t,\mathbf x)
=
-\frac1\varepsilon
\mathbf QDv(t,\mathbf x).
\]
At a generic point \((s,\mathbf y)\), for any
\(\boldsymbol\theta\in(\mathcal{H}_K)^N\),
\[
\begin{aligned}
\frac{\varepsilon}{2}
\|\boldsymbol\theta-b(s,\mathbf y)\|_{\mathbf Q^{-1}}^2
={}&
\frac{\varepsilon}{2}
\|\boldsymbol\theta\|_{\mathbf Q^{-1}}^2
+
\langle\boldsymbol\theta,Dv(s,\mathbf y)\rangle+
\frac1{2\varepsilon}
\langle
Dv(s,\mathbf y),
\mathbf QDv(s,\mathbf y)
\rangle.
\end{aligned}
\]
Using the HJB equation in the final term gives
\[
\begin{aligned}
&
\partial_tv(s,\mathbf y)
+
\langle\boldsymbol\theta,Dv(s,\mathbf y)\rangle
+
\frac12
\operatorname{Tr}
\left(
\mathbf QD^2v(s,\mathbf y)
\right)
+
\frac{\varepsilon}{2}
\|\boldsymbol\theta\|_{\mathbf Q^{-1}}^2=
\frac{\varepsilon}{2}
\|\boldsymbol\theta-b(s,\mathbf y)\|_{\mathbf Q^{-1}}^2.
\end{aligned}
\]

Fix \((t,\mathbf x)\), set
\[
\boldsymbol\theta_0=b(t,\mathbf x),
\]
and define the frozen-feedback process
\[
\mathbf Y_\rho^{t,\mathbf x}
=
\mathbf x
+
\rho\boldsymbol\theta_0
+
\mathbf Q^{1/2}\mathbf B_\rho,
\qquad
0\leq\rho\leq h,
\]
where \(\mathbf B\) is a standard Brownian motion. Applying It\^o's
formula to
\[
\rho
\longmapsto
v(t+\rho,\mathbf Y_\rho^{t,\mathbf x})
\]
and using the preceding completion-of-squares identity yields
\[
\begin{aligned}
&
\mathbb E
v(t+h,\mathbf Y_h^{t,\mathbf x})
+
\frac{\varepsilon h}{2}
\|b(t,\mathbf x)\|_{\mathbf Q^{-1}}^2
-
v(t,\mathbf x)=
\frac{\varepsilon}{2}
\mathbb E
\int_0^h
\left\|
b(t,\mathbf x)
-
b(t+\rho,\mathbf Y_\rho^{t,\mathbf x})
\right\|_{\mathbf Q^{-1}}^2
\,d\rho.
\end{aligned}
\]
Since
\[
\mathbf Y_h^{t,\mathbf x}
\overset{d}{=}
\mathbf x
+
h\,b(t,\mathbf x)
+
\sqrt h\,\mathbf Q^{1/2}Z,
\]
the left-hand side is exactly
\(r_h^{K,N,\varepsilon}(t,\mathbf x)\). This proves
~\Eqref{eq:frozen-feedback-euler-identity-fixed}.

For the joint sequence, conditionally on \(\mathbf X_m\), define
\[
\mathbf Y_{m,\rho}^{(j)}
=
\mathbf X_m
+
\rho\,b_j(t_m,\mathbf X_m)
+
\mathbf Q_{j,\mathrm{prop}}^{1/2}\mathbf B_{m,\rho},
\qquad
0\leq\rho\leq h_j,
\]
where \(\mathbf B_{m,\cdot}\) is independent of
\(\mathcal F_{t_m}\). Then
\[
\mathbf Y_{m,\rho}^{(j)}-\mathbf X_m
=
\rho\,b_j(t_m,\mathbf X_m)
+
\mathbf Q_{j,\mathrm{prop}}^{1/2}\mathbf B_{m,\rho}.
\]
The Brownian increment has conditional mean zero, so the cross term
vanishes, and
\[
\begin{aligned}
\frac1{N_j}
\mathbb E\left[
\left\|
\mathbf Y_{m,\rho}^{(j)}-\mathbf X_m
\right\|^2
\middle|
\mathbf X_m
\right]
={}&
\frac{\rho^2}{N_j}
\|b_j(t_m,\mathbf X_m)\|^2
\\
&+
\rho\operatorname{Tr}(Q_{K_j,\mathrm{prop}}),
\end{aligned}
\]
where we used
\[
\operatorname{Tr}(\mathbf Q_{j,\mathrm{prop}})
=
N_j\operatorname{Tr}(Q_{K_j,\mathrm{prop}}).
\]

The assumed H\"older--Lipschitz estimate therefore gives
\[
\begin{aligned}
&
\frac1{N_j}
\mathbb E\left[
\left\|
b_j(t_m+\rho,\mathbf Y_{m,\rho}^{(j)})
-
b_j(t_m,\mathbf X_m)
\right\|_{\mathbf Q_{j,\mathrm{prop}}^{-1}}^2
\right]\leq
L_j^2
\left[
\rho^{2\zeta}
+
\frac{\rho^2}{N_j}
\mathbb E
\|b_j(t_m,\mathbf X_m)\|^2
+
\rho\operatorname{Tr}(Q_{K_j,\mathrm{prop}})
\right].
\end{aligned}
\]
Substituting this estimate into the frozen-feedback identity and
integrating over \(\rho\in[0,h_j]\) yields
\[
\begin{aligned}
\frac1{N_j}
\mathbb E
\left[
r_{h_j}^{K_j,N_j,\varepsilon_j}
(t_m,\mathbf X_m)
\right]
\leq{}&
\frac{\varepsilon_jL_j^2}{2}
\left[
\frac{h_j^{2\zeta+1}}{2\zeta+1}
+
\frac{h_j^3}{3N_j}
\mathbb E
\|b_j(t_m,\mathbf X_m)\|^2+
\frac{h_j^2}{2}
\operatorname{Tr}(Q_{K_j,\mathrm{prop}})
\right].
\end{aligned}
\]
Finally, summing over \(m=0,\ldots,M_j-1\), using
\(M_jh_j=1\), and recalling
\[
B_j
=
\frac1{N_j}
\sum_{m=0}^{M_j-1}
h_j\,
\mathbb E
\|b_j(t_m,\mathbf X_m)\|^2,
\]
gives
\[
\mathcal E_{h_j}^{K_j,N_j,\varepsilon_j}
\leq
\frac{\varepsilon_jL_j^2}{2}
\left[
\frac{h_j^{2\zeta}}{2\zeta+1}
+
\frac{h_j}{2}
\operatorname{Tr}(Q_{K_j,\mathrm{prop}})
+
\frac{h_j^2}{3}B_j
\right].
\]
\end{proof}
\begin{rmk}[Smooth interpretation and magnitude of the covariance effect]
\label{rem:smooth-fixed-covariance-effect}
Suppose \(v_\varepsilon^{K,N}\) is sufficiently smooth for the following
second-order Gaussian expansion to hold uniformly along the states visited by
the chain. Define
\[
\chi_v(t,\mathbf x)
:=
\operatorname{Tr}\left(
\left[
\mathbf Q_{K,\mathrm{dyn}}
-
\mathbf Q_{K,\mathrm{prop}}
\right]
D^2v_\varepsilon^{K,N}(t,\mathbf x)
\right).
\]
Then
\[
c_h^{K,N,\varepsilon}(t,\mathbf x)
=
\frac h2
\chi_v(t+h,\overline{\mathbf x}_h)
+
o(h).
\]
Since the positive-part map is Lipschitz,
\[
\begin{aligned}
g_{h}^{K,N,\varepsilon}(t,\mathbf x)
&=
\frac h2
\left(
-\chi_v(t+h,\overline{\mathbf x}_h)
\right)_+
+
o(h),
\\
p_{h}^{K,N,\varepsilon}(t,\mathbf x)
&=
\frac h2
\left(
\chi_v(t+h,\overline{\mathbf x}_h)
\right)_+
+
o(h).
\end{aligned}
\]
Consequently,
\[
\begin{aligned}
\mathcal G_{h}^{K,N,\varepsilon}
&=
\frac1{2N}
\sum_{m=0}^{M-1}
h\,
\mathbb E\left[
\left(
-\chi_v(t_{m+1},\overline{\mathbf X}_{m+1})
\right)_+
\right]
+
o(1),
\\
\mathcal P_{h}^{K,N,\varepsilon}
&=
\frac1{2N}
\sum_{m=0}^{M-1}
h\,
\mathbb E\left[
\left(
\chi_v(t_{m+1},\overline{\mathbf X}_{m+1})
\right)_+
\right]
+
o(1).
\end{aligned}
\]
If each one-step remainder is \(O(h^2)\), then the accumulated remainder is
\(O(h)\).

For proportional covariances, let
\[
\kappa_v(t,\mathbf x)
=
\operatorname{Tr}\left(
\mathbf Q_{K,\mathrm{prop}}
D^2v_\varepsilon^{K,N}(t,\mathbf x)
\right).
\]
Then
\[
\chi_v
=
(\alpha_0-1)\kappa_v.
\]
If the execution covariance is colder, \(\alpha_0<1\), then
\[
\begin{aligned}
g_{h}^{K,N,\varepsilon}(t,\mathbf x)
&=
\frac h2
(1-\alpha_0)
\left(
\kappa_v(t+h,\overline{\mathbf x}_h)
\right)_+
+
o(h),
\\
p_{h}^{K,N,\varepsilon}(t,\mathbf x)
&=
\frac h2
(1-\alpha_0)
\left(
-\kappa_v(t+h,\overline{\mathbf x}_h)
\right)_+
+
o(h).
\end{aligned}
\]
Thus positive continuation-value curvature contributes to the negative gain
term, while negative curvature contributes to the unfavorable term. If the
execution covariance is hotter, \(\alpha_0>1\), the roles of the positive and
negative curvature regions are reversed:
\[
\begin{aligned}
g_{h}^{K,N,\varepsilon}(t,\mathbf x)
&=
\frac h2
(\alpha_0-1)
\left(
-\kappa_v(t+h,\overline{\mathbf x}_h)
\right)_+
+
o(h),
\\
p_{h}^{K,N,\varepsilon}(t,\mathbf x)
&=
\frac h2
(\alpha_0-1)
\left(
\kappa_v(t+h,\overline{\mathbf x}_h)
\right)_+
+
o(h).
\end{aligned}
\]
Therefore a fixed cold covariance improves the approximation whenever
positive weighted curvature dominates negative weighted curvature along the
visited path; a fixed hot covariance improves it under the opposite
curvature balance. 
\end{rmk}

\begin{prop}[Continuous-time covariance identity]
\label{prop:continuous-fixed-covariance-identity}
Assume that \(v_\varepsilon^{K,N}\) is \(C^{1,2}\) and that the matched
closed-loop feedback
\[
b(t,\mathbf x)
=
-\frac1\varepsilon
\mathbf Q_{K,\mathrm{prop}}
Dv_\varepsilon^{K,N}(t,\mathbf x)
\]
is admissible. Consider
\[
d\mathbf X_t^{\mathrm{dyn}}
=
b(t,\mathbf X_t^{\mathrm{dyn}})\,dt
+
\mathbf Q_{K,\mathrm{dyn}}^{1/2}d\mathbf W_t,
\qquad
\mathbf X_0^{\mathrm{dyn}}
=
\Pi_K\mathbf x_0.
\]
Assume that the covariance SDE above admits a nonexplosive
strong solution and that
\[
\mathbb E
\int_0^1
\left[
\ell_\varepsilon^{K,N}
(t,X_t^{\mathrm{dyn}})
+
|\chi_v(t,X_t^{\mathrm{dyn}})|
+
\left\|
Q_{K,\mathrm{dyn}}^{1/2}
Dv_\varepsilon^{K,N}
(t,X_t^{\mathrm{dyn}})
\right\|^2
\right]dt
<
\infty.
\]
Set
\[
\chi_t
:=
\operatorname{Tr}\left(
\left[
\mathbf Q_{K,\mathrm{dyn}}
-
\mathbf Q_{K,\mathrm{prop}}
\right]
D^2v_\varepsilon^{K,N}
(t,\mathbf X_t^{\mathrm{dyn}})
\right).
\]
Then
\[
\begin{aligned}
&
\mathbb E
F_{K,N}(\mathbf X_1^{\mathrm{dyn}})
+
\mathbb E
\int_0^1
\ell_\varepsilon^{K,N}
(t,\mathbf X_t^{\mathrm{dyn}})\,dt =
v_\varepsilon^{K,N}
\bigl(0,\Pi_K\mathbf x_0\bigr)
+
\frac12
\mathbb E
\int_0^1
\chi_t\,dt.
\end{aligned}
\]
Define the nonnegative continuous-time gain and unfavorable contribution
\[
\mathfrak G_{}^{K,N,\varepsilon}
:=
\frac1{2N}
\mathbb E
\int_0^1
(-\chi_t)_+\,dt,
\qquad
\mathfrak P_{}^{K,N,\varepsilon}
:=
\frac1{2N}
\mathbb E
\int_0^1
(\chi_t)_+\,dt.
\]
Then
\begin{equation}
\label{eq:continuous-fixed-covariance-residual}
\mathfrak R_{}^{K,N,\varepsilon}
:=
\frac1{2N}
\mathbb E
\int_0^1
\chi_t\,dt
=
-\mathfrak G_{}^{K,N,\varepsilon}
+
\mathfrak P_{}^{K,N,\varepsilon},
\end{equation}
and consequently
\begin{equation}
\label{eq:continuous-fixed-covariance-bound-with-gain}
\mathbb E
G_K(\mu_{\mathbf X_1^{\mathrm{dyn}}}^N)
-
\mathcal V_0
\leq
\mathcal R_{K,N,\varepsilon}^{\mathrm{prop}}
-
\mathfrak G_{}^{K,N,\varepsilon}
+
\mathfrak P_{}^{K,N,\varepsilon}.
\end{equation}
In the proportional case,
\[
\chi_t
=
(\alpha_0-1)
\operatorname{Tr}\left(
\mathbf Q_{K,\mathrm{prop}}
D^2v_\varepsilon^{K,N}
(t,\mathbf X_t^{\mathrm{dyn}})
\right).
\]
\end{prop}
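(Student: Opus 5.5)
The plan is to run the verification/Itô argument of Lemma~\ref{lem:projected-verification} on the particle level. The process is $\mathbf X^{\mathrm{dyn}}$, and the value is the matched value $v:=v_\varepsilon^{K,N}$. The only change from the matched case is that the second-order term now carries $\mathbf Q_{K,\mathrm{dyn}}$ instead of $\mathbf Q_{K,\mathrm{prop}}$. Writing $\mathbf Q:=\mathbf Q_{K,\mathrm{prop}}$, I will apply Itô's formula to $s\mapsto v(s,\mathbf X_s^{\mathrm{dyn}})$ on $[0,r]$ with $r<1$. This gives
\[
dv=\Bigl(\partial_tv+\langle b,Dv\rangle+\tfrac12\operatorname{Tr}(\mathbf Q_{K,\mathrm{dyn}}D^2v)\Bigr)ds+\langle Dv,\mathbf Q_{K,\mathrm{dyn}}^{1/2}d\mathbf W_s\rangle .
\]
I will then add and subtract $\tfrac12\operatorname{Tr}(\mathbf QD^2v)$ in the drift, and use the completion-of-squares identity from the proof of Proposition~\ref{prop:vanishing-euler-residual-fixed} with $\boldsymbol\theta=b(s,\mathbf y)$. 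That identity gives
\[
\partial_tv+\langle b,Dv\rangle+\tfrac12\operatorname{Tr}(\mathbf QD^2v)+\ell_\varepsilon^{K,N}=0 ,
\]
so the drift reduces to $-\ell_\varepsilon^{K,N}(s,\mathbf X_s^{\mathrm{dyn}})+\tfrac12\chi_s$.

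Next I take expectations. The stochastic integral is a true martingale, because its quadratic variation is $\int\|\mathbf Q_{K,\mathrm{dyn}}^{1/2}Dv\|^2ds$, which is integrable by hypothesis. If needed I first localize with stopping times $\tau_n$ and then remove the localization by dominated convergence, using the integrability of $\ell$ and $|\chi|$. This yields
\[
\mathbb E v(r,\mathbf X_r^{\mathrm{dyn}})+\mathbb E\int_0^r\ell_\varepsilon^{K,N}\,dt=v(0,\Pi_K\mathbf x_0)+\tfrac12\mathbb E\int_0^r\chi_t\,dt .
\]

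The main obstacle is passing $r\uparrow1$ in the term $\mathbb Ev(r,\mathbf X_r^{\mathrm{dyn}})\to\mathbb EF_{K,N}(\mathbf X_1^{\mathrm{dyn}})$, since $v$ is only $C^{1,2}$ on $[0,1)$ and may be merely continuous at $t=1$. First I use continuity of $v$ up to $t=1$, which holds by the Cole--Hopf/Feynman--Kac representation together with continuity and quadratic growth of $G$. Combined with continuity of paths, this gives pointwise convergence. For domination I plan to use the quadratic-growth bound on $v$ inherited from \Eqref{eq:G-quadratic-growth}. The lower bound comes from $v\ge N\mathcal V_0$; the upper bound comes from $v(t,\mathbf x)\le \mathbb E F_{K,N}(\mathbf x+\Delta\mathbf W)$ by Jensen. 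I also need $\mathbb E\sup_t\|\mathbf X_t^{\mathrm{dyn}}\|^2<\infty$, which follows from the drift bound encoded in the integrability of $\ell$, i.e.\ $\int\|b\|^2_{\mathbf Q^{-1}}$, plus Gaussian noise. Alternatively, Fatou's lemma applied to the nonnegative quantity $v-N\mathcal V_0$ gives the inequality $\le$, which is all that the bound \Eqref{eq:continuous-fixed-covariance-bound-with-gain} needs. The remaining two integrals converge by monotone and dominated convergence.

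Finally, the decomposition \Eqref{eq:continuous-fixed-covariance-residual} is immediate from $\chi=(\chi)_+-(-\chi)_+$ and integrability of $|\chi_t|$. For \Eqref{eq:continuous-fixed-covariance-bound-with-gain}, I drop the nonnegative running cost, divide by $N$, use $F_{K,N}=NG_K(\mu^N)$, and subtract $\mathcal V_0$. I then bound $N^{-1}v(0,\Pi_K\mathbf x_0)-\mathcal V_0\le\mathcal R^{\mathrm{prop}}_{K,N,\varepsilon}$ by Theorem~\ref{thm:projection-first-global} applied with $Q_{\mathrm{prop}}$. The proportional case follows by substituting $\mathbf Q_{K,\mathrm{dyn}}=\alpha_0\mathbf Q$ and using linearity of the trace.
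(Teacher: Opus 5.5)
Your proposal is correct and follows the paper's proof: apply It\^o's formula to $v_\varepsilon^{K,N}(t,\mathbf X_t^{\mathrm{dyn}})$, use the closed-loop form $\partial_tv+\langle b,Dv\rangle+\frac12\operatorname{Tr}(\mathbf Q_{K,\mathrm{prop}}D^2v)+\ell_\varepsilon^{K,N}=0$ of the matched HJB so that the drift becomes $-\ell_\varepsilon^{K,N}+\frac12\chi_t$, then drop the running cost, divide by $N$, and invoke Theorem~\ref{thm:projection-first-global}. Your justification of the limit $r\uparrow1$ adds a step the paper only asserts as ``use the terminal condition''; it uses continuity of $v_\varepsilon^{K,N}$ up to $t=1$ from the Feynman--Kac representation, together with either quadratic-growth domination and $\mathbb E\sup_t\|\mathbf X_t^{\mathrm{dyn}}\|^2<\infty$, or Fatou's lemma when only the inequality is needed.
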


\begin{proof}
The matched particle HJB can be written in closed-loop form as
\[
\partial_tv_\varepsilon^{K,N}
+
\left\langle
b,Dv_\varepsilon^{K,N}
\right\rangle
+
\frac12
\operatorname{Tr}\left(
\mathbf Q_{K,\mathrm{prop}}
D^2v_\varepsilon^{K,N}
\right)
+
\ell_\varepsilon^{K,N}
=
0.
\]
Applying It\^o's formula to
\(v_\varepsilon^{K,N}(t,\mathbf X_t^{\mathrm{dyn}})\) gives
\[
\begin{aligned}
d
v_\varepsilon^{K,N}
(t,\mathbf X_t^{\mathrm{dyn}})
=
\left[
-\ell_\varepsilon^{K,N}
(t,\mathbf X_t^{\mathrm{dyn}})
+
\frac12\chi_t
\right]dt
+
\left\langle
Dv_\varepsilon^{K,N},
\mathbf Q_{K,\mathrm{dyn}}^{1/2}d\mathbf W_t
\right\rangle.
\end{aligned}
\]
Integrate, take expectations, and use the terminal condition. Dividing by
\(N\), dropping the nonnegative running cost, and applying
Theorem~\ref{thm:projection-first-global} yield
~\Eqref{eq:continuous-fixed-covariance-bound-with-gain}. The
decomposition follows from
\(\chi_t=(\chi_t)_+-(-\chi_t)_+\).
\end{proof}

\begin{rmk}[Interpretation and vanishing conditions for the covariance terms]
\label{rem:fixed-covariance-residual}
The negative term $-\mathfrak G_{}^{K,N,\varepsilon}$ is the part of the covariance nonmatch that lowers the matched
continuation-value certificate, while
$
\mathfrak P_{}^{K,N,\varepsilon}$ is the unfavorable part. Hence nonmatched execution improves the
matched continuous-time upper bound whenever $\mathfrak G_{}^{K,N,\varepsilon}
>
\mathfrak P_{}^{K,N,\varepsilon}$.

For a fixed cold covariance \(\alpha_0<1\), writing
\[
\kappa_v(t,\mathbf x)
=
\operatorname{Tr}\left(
\mathbf Q_{K,\mathrm{prop}}
D^2v_\varepsilon^{K,N}(t,\mathbf x)
\right),
\]
one has
\[
\begin{aligned}
\mathfrak G_{}^{K,N,\varepsilon}
&=
\frac{1-\alpha_0}{2N}
\mathbb E
\int_0^1
\left(
\kappa_v(t,\mathbf X_t^{\mathrm{dyn}})
\right)_+dt,
\\
\mathfrak P_{}^{K,N,\varepsilon}
&=
\frac{1-\alpha_0}{2N}
\mathbb E
\int_0^1
\left(
-\kappa_v(t,\mathbf X_t^{\mathrm{dyn}})
\right)_+dt.
\end{aligned}
\]
Thus positive weighted curvature contributes directly to the negative gain
term. For a hot covariance, the roles of positive and negative
curvature are reversed.

Moreover,
\[
\mathfrak G_{}^{K,N,\varepsilon}
+
\mathfrak P_{}^{K,N,\varepsilon}
\leq
\frac{|\alpha_0-1|}{2N}
\mathbb E
\int_0^1
\left|
\kappa_v(t,\mathbf X_t^{\mathrm{dyn}})
\right|dt.
\]
Hence both terms vanish along a joint sequence whenever the total weighted
particle curvature is \(o(N)\), after accounting for any dependence on
\(K\) and \(\varepsilon\). Vanishing is not required for improvement: a
strictly positive excess of gain over unfavorable curvature yields a
strictly better upper bound.
\end{rmk}

\subsubsection{Consistency of the fully finite one outer-loop rollout implementation}

For $\mathbf x=(x_1,\ldots,x_N)\in(\mathcal{H}_K)^N$, write $\mathcal J_{K,N}(\mathbf x)
:=
G_K(\mu_{\mathbf x}^N)
=
\frac1N F_{K,N}(\mathbf x)$. For each Euler step \(m\), let
\(\Xi_m^{R,S}\) denote the complete collection of fresh context and
candidate variables used to construct the multi-context estimator, i.e.,
\[
\Xi_m^{R,S}
:=
\left(
\left(Z_{m,p}^{(r)}\right)_{
\substack{1\leq r\leq R\\1\leq p\leq N}},
\left(\xi_{m,i,r,s}\right)_{
\substack{1\leq i\leq N\\1\leq r\leq R\\1\leq s\leq S}}
\right),
\]
where all $Z_{m,p}^{(r)},\ \xi_{m,i,r,s}
\sim
\mathcal N(0,I_K)$ are mutually independent.

Write $\widehat b_m^{R,S}(\mathbf x)
=
\widehat b^{R,S}(t_m,\mathbf x;\Xi_m^{R,S})
\in(\mathcal{H}_K)^N$. The variables $\Xi_0^{R,S},\ldots,\Xi_{M-1}^{R,S}, Z_1^{\mathrm{ex}},\ldots,Z_M^{\mathrm{ex}}$ are mutually independent, and $Z_m^{\mathrm{ex}}
\sim
\mathcal N(0,I_{NK})$.

Define the finite-candidate chain by
\begin{equation}
\label{eq:finite-fixed-covariance-chain}
\left\{
\begin{aligned}
\mathbf C_0^{R,S,h}
&=
\Pi_K\mathbf x_0,
\\
\mathbf C_{m+1}^{R,S,h}
&=
\mathbf C_m^{R,S,h}
+
h\widehat b_m^{R,S}
\left(
\mathbf C_m^{R,S,h}
\right)
+
\sqrt h\,
\mathbf Q_{K,\mathrm{dyn}}^{1/2}
Z_{m+1}^{\mathrm{ex}}.
\end{aligned}
\right.
\end{equation}

On the same probability space, using the same execution noises
\(Z_{m+1}^{\mathrm{ex}}\), define the exact-feedback nonmatched-covariance
chain by
\begin{equation}
\label{eq:coupled-exact-fixed-covariance-chain}
\left\{
\begin{aligned}
\mathbf X_0^h
&=
\Pi_K\mathbf x_0,
\\
\mathbf X_{m+1}^h
&=
\mathbf X_m^h
+
h b(t_m,\mathbf X_m^h)
+
\sqrt h\,
\mathbf Q_{K,\mathrm{dyn}}^{1/2}
Z_{m+1}^{\mathrm{ex}}.
\end{aligned}
\right.
\end{equation}

Set
\[
\delta_{R,S}
:=
\frac1R+\frac1{RS}.
\]

\begin{ass}[Quantitative stability of the finite implementation]
\label{ass:finite-implementation-fixed}
Fix \(K,N,\varepsilon>0\) and \(h=1/M>0\). Assume the following.

\begin{enumerate}
\item
There exist constants \(L_b,B_b<\infty\) such that, for every
\(m=0,\ldots,M-1\) and every
\(\mathbf x,\mathbf y\in(\mathcal{H}_K)^N\),
\[
\left\|
b(t_m,\mathbf x)-b(t_m,\mathbf y)
\right\|
\leq
L_b\|\mathbf x-\mathbf y\|,
\]
and
\[
\|b(t_m,\mathbf x)\|^2
\leq
B_b
\left(
1+\|\mathbf x\|^2
\right).
\]

\item
There exists \(C_{\mathrm{MC}}<\infty\), independent of
\(m,R,S\), and \(\mathbf x\), such that
\begin{equation}
\label{eq:uniform-multicontext-mse}
\mathbb E_{\Xi_m^{R,S}}
\left[
\left\|
\widehat b_m^{R,S}(\mathbf x)
-
b(t_m,\mathbf x)
\right\|^2
\right]
\leq
C_{\mathrm{MC}}
\delta_{R,S}
\left(
1+\|\mathbf x\|^2
\right).
\end{equation}
Here the expectation is only over the fresh context and candidate
variables at step \(m\).

\item
The terminal cloud functional is globally Lipschitz: there exists
\(L_{\mathcal J}<\infty\) such that
\begin{equation}
\label{eq:terminal-cloud-lipschitz}
\left|
\mathcal J_{K,N}(\mathbf x)
-
\mathcal J_{K,N}(\mathbf y)
\right|
\leq
L_{\mathcal J}
\|\mathbf x-\mathbf y\|,
\qquad
\mathbf x,\mathbf y\in(\mathcal{H}_K)^N.
\end{equation}
\end{enumerate}
\end{ass}

\begin{thm}[Strong consistency of finite-candidate SCMO]
\label{thm:finite-scmo-consistency-fixed}
Under
Assumption~\ref{ass:finite-implementation-fixed}, there exists a
constant
\[
C_{\mathrm{stab}}
=
C_{\mathrm{stab}}
\left(
K,N,\varepsilon,h,L_b,B_b,C_{\mathrm{MC}},
\mathbf Q_{K,\mathrm{dyn}},\mathbf x_0
\right)
<
\infty
\]
which is independent of \(R\) and \(S\), such that
\begin{equation}
\label{eq:strong-finite-chain-rate}
\max_{0\leq m\leq M}
\mathbb E
\left[
\left\|
\mathbf C_m^{R,S,h}
-
\mathbf X_m^h
\right\|^2
\right]
\leq
C_{\mathrm{stab}}
\left(
\frac1R+\frac1{RS}
\right).
\end{equation}
Consequently,
\[
\mathbf C_m^{R,S,h}
\longrightarrow
\mathbf X_m^h
\]
in \(L^2\), and hence in probability, for every \(m\leq M\), whenever
\[
\delta_{R,S}\longrightarrow0.
\]

Moreover,
\begin{equation}
\label{eq:terminal-objective-mc-rate}
\left|
\mathbb E
G_K
\left(
\mu_{\mathbf C_M^{R,S,h}}^N
\right)
-
\mathbb E
G_K
\left(
\mu_{\mathbf X_M^h}^N
\right)
\right|
\leq
L_{\mathcal J}
\sqrt{
C_{\mathrm{stab}}\delta_{R,S}
}.
\end{equation}
It follows that
\[
\begin{aligned}
\limsup_{\delta_{R,S}\to0}
\left[
\mathbb E
G_K
\left(
\mu_{\mathbf C_M^{R,S,h}}^N
\right)
-
\mathcal V_0
\right]
\leq{}&
\mathcal R_{K,N,\varepsilon}^{\mathrm{prop}}
+
\mathcal E_h^{K,N,\varepsilon}
-
\mathcal G_{h}^{K,N,\varepsilon}
+
\mathcal P_{h}^{K,N,\varepsilon}.
\end{aligned}
\]
\end{thm}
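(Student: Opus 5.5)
The plan is a discrete Gronwall argument on the coupled error $\mathbf D_m:=\mathbf C_m^{R,S,h}-\mathbf X_m^h$. Because both chains in \Eqref{eq:finite-fixed-covariance-chain} and \Eqref{eq:coupled-exact-fixed-covariance-chain} use the same execution noises $Z_{m+1}^{\mathrm{ex}}$, the Gaussian terms cancel and $\mathbf D_0=0$. The error then satisfies
\[
\mathbf D_{m+1}=\mathbf D_m+h\bigl[b(t_m,\mathbf C_m)-b(t_m,\mathbf X_m)\bigr]+h\bigl[\widehat b_m^{R,S}(\mathbf C_m)-b(t_m,\mathbf C_m)\bigr],
\]
with superscripts dropped. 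The first bracket is the drift-stability term and the second is the Monte Carlo error. I would bound $\mathbb E\|\mathbf D_{m+1}\|^2$ by $(1+h)$ times the square of the first two summands plus $(1+1/h)h^2$ times the square of the last. Using Assumption~\ref{ass:finite-implementation-fixed}(i), this gives
\[
\mathbb E\|\mathbf D_{m+1}\|^2\le (1+h)(1+hL_b)^2\,\mathbb E\|\mathbf D_m\|^2+(h+h^2)\,\mathbb E\bigl\|\widehat b_m^{R,S}(\mathbf C_m)-b(t_m,\mathbf C_m)\bigr\|^2 .
\]

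The Monte Carlo term is handled by conditioning. Since $\Xi_m^{R,S}$ is independent of $\mathcal F_m\ni\mathbf C_m$, conditioning on $\mathbf C_m$ and applying the uniform bound \Eqref{eq:uniform-multicontext-mse} pointwise (Fubini, or the freezing lemma) gives $C_{\mathrm{MC}}\delta_{R,S}(1+\mathbb E\|\mathbf C_m\|^2)$. This bound is only useful if $\mathbb E\|\mathbf C_m\|^2$ is bounded uniformly in $R,S$, and I expect establishing that to be the main step.

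The second-moment bound follows by a second Gronwall recursion on $\mathbf C_m$ itself. Write $\widehat b=b+(\widehat b-b)$ and use the linear growth $\|b\|^2\le B_b(1+\|\mathbf x\|^2)$. The Monte Carlo error is bounded by $C_{\mathrm{MC}}\delta_{R,S}(1+\|\mathbf x\|^2)$ with $\delta_{R,S}\le2$. The execution noise has conditional mean zero, is independent of $\mathcal F_m$ and of $\Xi_m$, and has second moment $h\operatorname{Tr}(\mathbf Q_{K,\mathrm{dyn}})$. Together these give $\mathbb E\|\mathbf C_{m+1}\|^2\le(1+c h)\mathbb E\|\mathbf C_m\|^2+c h$, with $c$ depending only on $h,B_b,C_{\mathrm{MC}},\mathbf Q_{K,\mathrm{dyn}}$. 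Since $M=1/h$ is finite, this yields $\max_m\mathbb E\|\mathbf C_m\|^2\le C_1(\mathbf x_0,\dots)$ independent of $R,S$. Inserting it into the error recursion and iterating the linear recursion over $M$ steps from $\mathbf D_0=0$ gives \Eqref{eq:strong-finite-chain-rate}, with $C_{\mathrm{stab}}=e^{(1+2L_b+L_b^2h)}\,2C_{\mathrm{MC}}(1+C_1)$, up to harmless constants. The finiteness of $M$ is what makes all these constants acceptable.

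The remaining claims are short. $L^2$ convergence, and hence convergence in probability, is immediate. For \Eqref{eq:terminal-objective-mc-rate}, apply the Lipschitz property \Eqref{eq:terminal-cloud-lipschitz} of $\mathcal J_{K,N}=G_K(\mu^N_\cdot)$, then Jensen/Cauchy--Schwarz: $|\mathbb E\mathcal J(\mathbf C_M)-\mathbb E\mathcal J(\mathbf X_M)|\le L_{\mathcal J}(\mathbb E\|\mathbf D_M\|^2)^{1/2}$. For the final $\limsup$, write
\[
\mathbb E G_K(\mu^N_{\mathbf C_M})-\mathcal V_0\le \bigl[\mathbb E G_K(\mu^N_{\mathbf X_M})-\mathcal V_0\bigr]+L_{\mathcal J}\sqrt{C_{\mathrm{stab}}\delta_{R,S}} .
\]
Bound the bracket by Theorem~\ref{thm:fixed-covariance-bound}, which is valid because the exact chain does not depend on $R,S$, and let $\delta_{R,S}\to0$.
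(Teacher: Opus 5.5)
Your proposal is correct and follows essentially the same route as the paper. Both arguments first establish a uniform-in-$(R,S)$ second-moment Gronwall bound on $\mathbf C_m$, then use the coupled error recursion with factor $(1+h)(1+hL_b)^2$ and the Monte Carlo term controlled by conditioning on $\mathbf C_m$, and finish with the Lipschitz continuity of $\mathcal J_{K,N}$ combined with Theorem~\ref{thm:fixed-covariance-bound}.
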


\begin{proof}
We first establish a moment bound for the finite chain. By~\Eqref{eq:uniform-multicontext-mse} and the linear-growth
bound on \(b\),
\[
\begin{aligned}
\mathbb E_{\Xi_m^{R,S}}
\left[
\left\|
\widehat b_m^{R,S}(\mathbf x)
\right\|^2
\right]
&\leq
2\|b(t_m,\mathbf x)\|^2+
2\mathbb E_{\Xi_m^{R,S}}
\left[
\left\|
\widehat b_m^{R,S}(\mathbf x)
-
b(t_m,\mathbf x)
\right\|^2
\right]
\leq
B_{\widehat b}
\left(
1+\|\mathbf x\|^2
\right),
\end{aligned}
\]
where one may take
\[
B_{\widehat b}
=
2B_b+4C_{\mathrm{MC}},
\]
because
\[
\delta_{R,S}
=
\frac1R+\frac1{RS}
\leq2.
\]

For arbitrary vectors \(x,y\) and \(h\in(0,1]\),
\[
\|x+hy\|^2
\leq
(1+h)\|x\|^2
+
(h+h^2)\|y\|^2.
\]
Using the independence and zero mean of the execution noise gives
\[
\begin{aligned}
\mathbb E
\left[
\left\|
\mathbf C_{m+1}^{R,S,h}
\right\|^2
\right]
\leq{}&
(1+h)
\mathbb E
\left[
\left\|
\mathbf C_m^{R,S,h}
\right\|^2
\right]
+
(h+h^2)
\mathbb E
\left[
\left\|
\widehat b_m^{R,S}
\left(
\mathbf C_m^{R,S,h}
\right)
\right\|^2
\right]
+
h\operatorname{Tr}
\left(
\mathbf Q_{K,\mathrm{dyn}}
\right).
\end{aligned}
\]
Hence there is \(c_0<\infty\), independent of \(R,S\), such that
\[
\mathbb E
\left[
\left\|
\mathbf C_{m+1}^{R,S,h}
\right\|^2
\right]
\leq
(1+c_0h)
\mathbb E
\left[
\left\|
\mathbf C_m^{R,S,h}
\right\|^2
\right]
+
c_0h.
\]
The discrete Gronwall inequality yields
\begin{equation}
\label{eq:uniform-finite-chain-moment}
\sup_{R,S\geq1}
\max_{0\leq m\leq M}
\mathbb E
\left[
\left\|
\mathbf C_m^{R,S,h}
\right\|^2
\right]
\leq
K_C
\end{equation}
for some \(K_C<\infty\).

Set
\[
D_m^{R,S}
=
\mathbf C_m^{R,S,h}
-
\mathbf X_m^h
\]
and
\[
e_m^{R,S}
=
\widehat b_m^{R,S}
\left(
\mathbf C_m^{R,S,h}
\right)
-
b
\left(
t_m,\mathbf C_m^{R,S,h}
\right).
\]
Because the exact and finite chains use the same execution noise,
the noise terms cancel and
\[
\begin{aligned}
D_{m+1}^{R,S}
={}&
D_m^{R,S}+
h
\left[
b
\left(
t_m,\mathbf C_m^{R,S,h}
\right)
-
b(t_m,\mathbf X_m^h)
\right]
+
h e_m^{R,S}.
\end{aligned}
\]
Define
\[
A_m^{R,S}
=
D_m^{R,S}
+
h
\left[
b
\left(
t_m,\mathbf C_m^{R,S,h}
\right)
-
b(t_m,\mathbf X_m^h)
\right].
\]
The Lipschitz condition gives
\[
\|A_m^{R,S}\|
\leq
(1+hL_b)
\|D_m^{R,S}\|.
\]
Using
\[
\|a+he\|^2
\leq
(1+h)\|a\|^2
+
(h+h^2)\|e\|^2,
\]
we obtain
\[
\begin{aligned}
\mathbb E
\left[
\left\|
D_{m+1}^{R,S}
\right\|^2
\right]
\leq{}&
\rho_h
\mathbb E
\left[
\left\|
D_m^{R,S}
\right\|^2
\right]
+
(h+h^2)
\mathbb E
\left[
\left\|
e_m^{R,S}
\right\|^2
\right],
\end{aligned}
\]
where
\[
\rho_h
=
(1+h)(1+hL_b)^2.
\]

Conditioning on
\(\mathbf C_m^{R,S,h}\), using the independence of the fresh
context and candidate samples, and applying
~\Eqref{eq:uniform-multicontext-mse}, we get
\[
\begin{aligned}
\mathbb E
\left[
\left\|
e_m^{R,S}
\right\|^2
\right]
&=
\mathbb E
\left[
\mathbb E
\left[
\left.
\left\|
e_m^{R,S}
\right\|^2
\right|
\mathbf C_m^{R,S,h}
\right]
\right]
\leq
C_{\mathrm{MC}}
\delta_{R,S}
\left[
1+
\mathbb E
\left\|
\mathbf C_m^{R,S,h}
\right\|^2
\right]
\leq
C_{\mathrm{MC}}
(1+K_C)
\delta_{R,S}.
\end{aligned}
\]
Therefore,
\[
\mathbb E
\left[
\left\|
D_{m+1}^{R,S}
\right\|^2
\right]
\leq
\rho_h
\mathbb E
\left[
\left\|
D_m^{R,S}
\right\|^2
\right]
+
(h+h^2)
C_{\mathrm{MC}}
(1+K_C)
\delta_{R,S}.
\]
Since $D_0^{R,S}=0$, iteration gives
\[
\mathbb E
\left[
\left\|
D_m^{R,S}
\right\|^2
\right]
\leq
(h+h^2)
C_{\mathrm{MC}}
(1+K_C)
\left(
\sum_{q=0}^{m-1}\rho_h^q
\right)
\delta_{R,S}.
\]
Thus ~\Eqref{eq:strong-finite-chain-rate} holds with
\[
C_{\mathrm{stab}}
=
(h+h^2)
C_{\mathrm{MC}}
(1+K_C)
\sum_{q=0}^{M-1}\rho_h^q.
\]
Finally, the Lipschitz condition on \(\mathcal J_{K,N}\) gives
\[
\begin{aligned}
&
\left|
\mathbb E
\mathcal J_{K,N}
\left(
\mathbf C_M^{R,S,h}
\right)
-
\mathbb E
\mathcal J_{K,N}
\left(
\mathbf X_M^h
\right)
\right|
\leq
L_{\mathcal J}
\mathbb E
\left[
\left\|
\mathbf C_M^{R,S,h}
-
\mathbf X_M^h
\right\|
\right]
\leq
L_{\mathcal J}
\sqrt{
C_{\mathrm{stab}}\delta_{R,S}
}.
\end{aligned}
\]
Combining this convergence with
Theorem~\ref{thm:fixed-covariance-bound}
proves the final assertion.
\end{proof}

\begin{rmk}[A directly checkable smooth sufficient condition]
\label{rem:checkable-finite-implementation-condition}
Suppose
\[
F_{K,N}\in C_b^2((\mathcal{H}_K)^N),
\]
and write
\[
M_1
=
\sup_{\mathbf x}
\|DF_{K,N}(\mathbf x)\|,
\qquad
M_2
=
\sup_{\mathbf x}
\|D^2F_{K,N}(\mathbf x)\|_{\mathrm{op}}.
\]
Assume also that
\[
F_{\min}
\leq
F_{K,N}(\mathbf x)
\leq
F_{\max},
\qquad
\mathbf x\in(H_K)^N.
\]

Let
\[
\pi_{t,\mathbf x}(d\mathbf u)
=
\frac{
\exp(
-F_{K,N}(\mathbf x+\mathbf u)/\varepsilon
)
}{
\mathbb E[
\exp(
-F_{K,N}(\mathbf x+\mathbf U)/\varepsilon
)
]
}
\,
\mathcal N
\left(
0,(1-t)\mathbf Q_{K,\mathrm{prop}}
\right)(d\mathbf u).
\]
Gaussian integration by parts gives
\[
b(t,\mathbf x)
=
-\frac1\varepsilon
\mathbf Q_{K,\mathrm{prop}}
\mathbb E_{\pi_{t,\mathbf x}}
\left[
DF_{K,N}(\mathbf x+\mathbf U)
\right].
\]
Consequently,
\[
\|b(t,\mathbf x)\|
\leq
\frac{
\|\mathbf Q_{K,\mathrm{prop}}\|_{\mathrm{op}}M_1
}{
\varepsilon
},
\]
and
\[
\|D_{\mathbf x}b(t,\mathbf x)\|_{\mathrm{op}}
\leq
\|\mathbf Q_{K,\mathrm{prop}}\|_{\mathrm{op}}
\left(
\frac{M_2}{\varepsilon}
+
\frac{M_1^2}{\varepsilon^2}
\right).
\]
Thus the feedback is globally Lipschitz and bounded.

Moreover,
\[
e^{-F_{\max}/\varepsilon}
\leq
L_{t,\mathbf x}(\mathbf U)
\leq
e^{-F_{\min}/\varepsilon}.
\]
Hence every empirical Gibbs denominator has the deterministic lower
bound
\[
\overline B_i^{R,S}
\geq
e^{-F_{\max}/\varepsilon},
\]
and the inverse-denominator condition holds automatically. Since the
candidate increments are Gaussian, all required numerator moments are
finite, and the uniform estimate
\[
\mathbb E
\left[
\left\|
\widehat b_m^{R,S}(\mathbf x)-b(t_m,\mathbf x)
\right\|^2
\right]
\leq
C_{\mathrm{MC}}
\left(
\frac1R+\frac1{RS}
\right)
\]
holds with a constant independent of \(\mathbf x,m,R,S\).

Finally,
\[
\mathcal J_{K,N}
=
\frac1N F_{K,N}
\]
is globally Lipschitz with
\[
L_{\mathcal J}
\leq
\frac{M_1}{N}.
\]
Therefore
Assumption~\ref{ass:finite-implementation-fixed}
holds.
\end{rmk}

\begin{rmk}[Localized polynomial-growth conditions and interacting objectives]
\label{rem:polynomial-growth-finite-implementation}

The globally bounded assumptions in
Remark~\ref{rem:checkable-finite-implementation-condition}
are convenient but not necessary. They can be replaced by local
regularity, polynomial growth, denominator stability, and moment bounds
for the finite and exact chains.

Fix \(K,N,\varepsilon>0\) and \(h=1/M>0\), and write
\[
E_{K,N}
=
(\mathcal{H}_K)^N.
\]
For \(m=0,\ldots,M-1\), let
\[
\mathbf U_m
\sim
\mathcal N
\left(
0,\tau_m\mathbf Q_{K,\mathrm{prop}}
\right),
\qquad
\tau_m=1-t_m.
\]
For the finite-candidate estimator, write
\[
\overline B_{m,i}^{R,S}(\mathbf x)
=
\frac1R
\sum_{r=1}^R
\left[
\frac1S
\sum_{s=1}^S
L_{m,i,r,s}(\mathbf x)
\right]
\]
for its empirical Gibbs denominator, and define the terminal cloud
functional
\[
\mathcal J_{K,N}(\mathbf x)
=
G_K(\mu_{\mathbf x}^N).
\]

Assume the following conditions.

\medskip
\noindent
\textit{(i) Structural regularity and polynomial growth.}
Suppose that
\[
F_{K,N}
\in
C^2(E_{K,N})
\]
is bounded from below and satisfies, for some \(q\geq0\),
\begin{equation}
\label{eq:polynomial-F-growth}
\left|
F_{K,N}(\mathbf x)
\right|
\leq
C_F
\left(
1+\|\mathbf x\|^2
\right),
\qquad
\mathbf x\in E_{K,N},
\end{equation}
and
\begin{equation}
\label{eq:polynomial-F-derivative-growth}
\left\|
DF_{K,N}(\mathbf x)
\right\|
+
\left\|
D^2F_{K,N}(\mathbf x)
\right\|_{\mathrm{op}}
\leq
C_F
\left(
1+\|\mathbf x\|^q
\right),
\qquad
\mathbf x\in E_{K,N}.
\end{equation}
The quadratic bound in~\Eqref{eq:polynomial-F-growth} is the
particle-level counterpart of the at-most-quadratic growth assumption
on \(G_K\).

\medskip
\noindent
\textit{(ii) Local inverse-denominator stability.}
Assume that, for every \(R_0<\infty\),
\begin{equation}
\label{eq:local-inverse-denominator}
\sup_{\substack{
0\leq m\leq M-1\\
1\leq i\leq N\\
\|\mathbf x\|\leq R_0\\
R,S\geq1
}}
\mathbb E
\left[
\left(
\overline B_{m,i}^{R,S}(\mathbf x)
\right)^{-4}
\right]
<
\infty.
\end{equation}

\medskip
\noindent
\textit{(iii) Uniform chain moments.}
Assume that, for some \(p>0\),
\begin{equation}
\label{eq:uniform-p-moments-localized}
\sup_{R,S\geq1}
\max_{0\leq m\leq M}
\mathbb E
\left[
\left\|
\mathbf C_m^{R,S,h}
\right\|^p
\right]
+
\max_{0\leq m\leq M}
\mathbb E
\left[
\left\|
\mathbf X_m^h
\right\|^p
\right]
<
\infty.
\end{equation}

\medskip
\noindent
\textit{(iv) Continuity and polynomial growth of the terminal functional.}
Assume that \(\mathcal J_{K,N}\) is continuous and that, for some
\(r<p\),
\begin{equation}
\label{eq:J-polynomial-growth}
\left|
\mathcal J_{K,N}(\mathbf x)
\right|
\leq
C_{\mathcal J}
\left(
1+\|\mathbf x\|^r
\right),
\qquad
\mathbf x\in E_{K,N}.
\end{equation}

We now derive the consequences of these assumptions. Define the
desirability denominator
\[
\beta_m(\mathbf x)
=
\mathbb E
\left[
\exp\left(
-\frac{
F_{K,N}(\mathbf x+\mathbf U_m)
}{\varepsilon}
\right)
\right].
\]
Since \(F_{K,N}\) is bounded from below, the Gibbs factor is bounded
from above. Dominated convergence therefore implies that
\(\beta_m\) is continuous. Moreover,
\[
\beta_m(\mathbf x)>0
\]
for every \(\mathbf x\). Since the set of Euler times is finite, for
every localization radius \(R_0<\infty\),
\begin{equation}
\label{eq:compact-desirability-lower-bound}
\underline\beta_{R_0}
:=
\min_{\substack{
0\leq m\leq M-1\\
\|\mathbf x\|\leq R_0
}}
\beta_m(\mathbf x)
>
0.
\end{equation}

Let
\[
\pi_{m,\mathbf x}(d\mathbf u)
=
\frac{
\exp\left(
-F_{K,N}(\mathbf x+\mathbf u)/\varepsilon
\right)
}{
\beta_m(\mathbf x)
}
\,
\mathcal N
\left(
0,\tau_m\mathbf Q_{K,\mathrm{prop}}
\right)(d\mathbf u)
\]
be the corresponding Gibbs-tilted proposal law. Gaussian integration
by parts gives
\begin{equation}
\label{eq:feedback-gradient-polynomial}
b(t_m,\mathbf x)
=
-\frac1\varepsilon
\mathbf Q_{K,\mathrm{prop}}
\mathbb E_{\pi_{m,\mathbf x}}
\left[
DF_{K,N}(\mathbf x+\mathbf U_m)
\right].
\end{equation}
Differentiating this representation gives
\begin{equation}
\label{eq:feedback-derivative-polynomial}
\begin{aligned}
D_{\mathbf x}b(t_m,\mathbf x)
={}&
-\frac1\varepsilon
\mathbf Q_{K,\mathrm{prop}}
\mathbb E_{\pi_{m,\mathbf x}}
\left[
D^2F_{K,N}(\mathbf x+\mathbf U_m)
\right]
+
\frac1{\varepsilon^2}
\mathbf Q_{K,\mathrm{prop}}
\operatorname{Cov}_{\pi_{m,\mathbf x}}
\left(
DF_{K,N}(\mathbf x+\mathbf U_m)
\right).
\end{aligned}
\end{equation}
\Eqref{eq:polynomial-F-derivative-growth} and
\Eqref{eq:compact-desirability-lower-bound}, together with Gaussian
polynomial moments, imply that, for every \(R_0<\infty\), there are
constants
\[
B_{b,R_0},
\qquad
L_{b,R_0}
<
\infty
\]
such that
\[
\sup_{\substack{
0\leq m\leq M-1\\
\|\mathbf x\|\leq R_0
}}
\|b(t_m,\mathbf x)\|
\leq
B_{b,R_0},
\]
and
\[
\|b(t_m,\mathbf x)-b(t_m,\mathbf y)\|
\leq
L_{b,R_0}
\|\mathbf x-\mathbf y\|
\]
whenever
\[
\|\mathbf x\|\vee\|\mathbf y\|
\leq
R_0.
\]
Thus the exact-feedback is locally bounded and locally Lipschitz,
although it need not be globally bounded or globally Lipschitz.

Because \(F_{K,N}\) is bounded from below, the required local numerator
moments are finite automatically:
\[
\sup_{\substack{
0\leq m\leq M-1\\
1\leq i\leq N\\
\|\mathbf x\|\leq R_0
}}
\mathbb E
\left[
L_{t_m,\mathbf x}(\mathbf U_m)^4
\left(
1+\|\mathbf U_{m,i}\|^4
\right)
\right]
<
\infty.
\]
Combining these numerator bounds with
\Eqref{eq:local-inverse-denominator}, the block variance argument in
Proposition~\ref{prop:multicontext-consistency-fixed} gives, for every
\(R_0<\infty\) and every \(R,S\geq1\),
\begin{equation}
\label{eq:local-multicontext-mse-polynomial}
\sup_{\substack{
0\leq m\leq M-1\\
\|\mathbf x\|\leq R_0
}}
\mathbb E_{\Xi_m^{R,S}}
\left[
\left\|
\widehat b_m^{R,S}(\mathbf x)
-
b(t_m,\mathbf x)
\right\|^2
\right]
\leq
C_{\mathrm{MC},R_0}
\left(
\frac1R+\frac1{RS}
\right).
\end{equation}
If only convergence in probability is required, the inverse-moment
condition can be replaced by compact-uniform convergence in probability
of the numerator and denominator block averages. The inverse-moment
condition is used to obtain the explicit \(L^2\) rate.

Define the joint exit time
\[
\tau_{R_0}
=
\inf
\left\{
m\leq M:
\left\|
\mathbf C_m^{R,S,h}
\right\|
\vee
\left\|
\mathbf X_m^h
\right\|
>
R_0
\right\}.
\]
Up to \(\tau_{R_0}\), the local Lipschitz and local Monte Carlo
constants are deterministic. The coupling proof of
Theorem~\ref{thm:finite-scmo-consistency-fixed} therefore gives
\[
\max_{0\leq m\leq M}
\mathbb E
\left[
\left\|
\mathbf C_{m\wedge\tau_{R_0}}^{R,S,h}
-
\mathbf X_{m\wedge\tau_{R_0}}^h
\right\|^2
\right]
\leq
C_{\mathrm{stab},R_0}
\left(
\frac1R+\frac1{RS}
\right).
\]
Consequently, for every \(\eta>0\),
\begin{equation}
\label{eq:localized-probability-bound}
\begin{aligned}
&
\mathbb P
\left(
\max_{0\leq m\leq M}
\left\|
\mathbf C_m^{R,S,h}
-
\mathbf X_m^h
\right\|
>
\eta
\right)
\\
&\qquad\leq
\frac{
(M+1)C_{\mathrm{stab},R_0}
}{
\eta^2
}
\left(
\frac1R+\frac1{RS}
\right)
+
\mathbb P(\tau_{R_0}\leq M).
\end{aligned}
\end{equation}
By the union bound, Markov's inequality, and
\Eqref{eq:uniform-p-moments-localized},
\[
\mathbb P(\tau_{R_0}\leq M)
\leq
\frac{(M+1)C_p}{R_0^p},
\]
where \(C_p\) is independent of \(R,S\). First sending
\[
R\to\infty,
\qquad
RS\to\infty,
\]
and then \(R_0\to\infty\), proves
\[
\max_{0\leq m\leq M}
\left\|
\mathbf C_m^{R,S,h}
-
\mathbf X_m^h
\right\|
\longrightarrow0
\]
in probability.

Finally, the moment condition
\Eqref{eq:uniform-p-moments-localized} and the growth bound
\Eqref{eq:J-polynomial-growth}, with \(r<p\), imply uniform
integrability of $\{
\mathcal J_{K,N}
(
\mathbf C_M^{R,S,h}
)\}_{R,S}$.

Continuity of \(\mathcal J_{K,N}\), convergence in probability, and
uniform integrability therefore yield
\[
\mathbb E
\mathcal J_{K,N}
\left(
\mathbf C_M^{R,S,h}
\right)
\longrightarrow
\mathbb E
\mathcal J_{K,N}
\left(
\mathbf X_M^h
\right).
\]
Thus the finite-candidate consistency result remains valid under
polynomial growth, without requiring \(F_{K,N}\) or its derivatives to
be globally bounded.
\end{rmk}

\begin{exam}[Quadratic-growth interacting measure objectives]
\label{exam:quadratic-growth-interacting-class}
Let \(\mathcal{H}_K\) be finite-dimensional, let \(a\in \mathcal{H}_K\), and consider
\[
\begin{aligned}
G_K(\mu)
={}&
\frac{\kappa_1}{2}
\int_{H_K}
\|x-a\|^2\,\mu(dx)
+
\frac{\kappa_2}{2}
\left\|
\int_{H_K}x\,\mu(dx)-a
\right\|^2
+
\lambda
\operatorname{MMD}_k^2(\mu,\nu),
\end{aligned}
\]
where
\[
\kappa_1>0,
\qquad
\kappa_2,\lambda\geq0,
\]
\(\nu\in\mathcal P_2(H_K)\), and \(k\) is a bounded symmetric positive-definite \(C^2\) kernel whose
first and second derivatives are bounded.

The first term is an additive quadratic confinement, the second term is
a nonlinear mean interaction, and the third term is a pairwise
distributional interaction. Since
\[
\left\|
\int x\,\mu(dx)-a
\right\|^2
\leq
2M_2(\mu)+2\|a\|^2
\]
and the bounded-kernel MMD term is bounded, there is a constant \(C\)
such that
\[
0
\leq
G_K(\mu)
\leq
C
\left(
1+M_2(\mu)
\right).
\]
Thus \(G_K\) is bounded from below, continuous, and has at most
quadratic growth.

For $
\mathbf x=(x_1,\ldots,x_N)$, $\overline x
=
\frac1N\sum_{i=1}^Nx_i$, the extensive terminal cost is
\[
\begin{aligned}
F_{K,N}(\mathbf x)
={}&
\frac{\kappa_1}{2}
\sum_{i=1}^N
\|x_i-a\|^2
+
\frac{N\kappa_2}{2}
\|\overline x-a\|^2
+
\lambda N
\operatorname{MMD}_k^2
\left(
\mu_{\mathbf x}^N,\nu
\right).
\end{aligned}
\]
Let
\[
k_\nu(x)
=
\int_{H_K}
k(x,y)\,\nu(dy).
\]
Using symmetry of \(k\), differentiation gives
\[
\begin{aligned}
D_{x_i}F_{K,N}(\mathbf x)
={}&
\kappa_1(x_i-a)
+
\kappa_2(\overline x-a)
+
\frac{2\lambda}{N}
\sum_{j=1}^N
D_1k(x_i,x_j)
-
2\lambda Dk_\nu(x_i).
\end{aligned}
\]
The bounded derivative assumptions on \(k\) imply
\[
\left\|
DF_{K,N}(\mathbf x)
\right\|
\leq
C_{K,N}
\left(
1+\|\mathbf x\|
\right),
\]
while the block Hessian satisfies
\[
\left\|
D^2F_{K,N}(\mathbf x)
\right\|_{\mathrm{op}}
\leq
C_{K,N}.
\]
In particular,
~\Eqref{eq:polynomial-F-growth} and
\Eqref{eq:polynomial-F-derivative-growth} hold with linear gradient
growth and bounded Hessian.

The coercive quadratic example is recovered by setting
\[
\kappa_2=\lambda=0.
\]
If \(\kappa_2>0\), the objective couples all particles through their
empirical mean. Indeed,
\[
D_{x_ix_j}^2
\left[
\frac{N\kappa_2}{2}
\|\overline x-a\|^2
\right]
=
\frac{\kappa_2}{N}I,
\qquad
1\leq i,j\leq N.
\]
Thus the objective is no longer an additive sum of independent
single-particle costs. If \(\lambda>0\), the MMD term adds a second,
kernel-mediated interaction between every pair of particles. Thus this class contains genuinely interacting empirical-measure
objectives while satisfying the structural growth and regularity
conditions~\Eqref{eq:polynomial-F-growth}--~\Eqref{eq:polynomial-F-derivative-growth}.
Moreover, since the MMD term is bounded and the quadratic part of
\(F_{K,N}\) has Hessian bounded by $\kappa_1+\kappa_2$, a sufficient condition for the inverse-denominator condition~\Eqref{eq:local-inverse-denominator} is
\[
4\tau_{\max}
(\kappa_1+\kappa_2)
\|Q_{K,\mathrm{prop}}\|_{\mathrm{op}}
<
\varepsilon,
\qquad
\tau_{\max}:=\max_{0\leq m\leq M-1}\tau_m.
\]
Indeed, under this condition, for every \(R_0<\infty\),
\[
\sup_{\substack{
0\leq m\leq M-1\\
\|\mathbf x\|\leq R_0
}}
\mathbb E
\left[
\exp\left(
\frac{4F_{K,N}(\mathbf x+\mathbf U_m)}{\varepsilon}
\right)
\right]
<\infty.
\]
Since \(z\mapsto z^{-4}\) is convex, Jensen's inequality gives
\[
\left(
\overline B_{m,i}^{R,S}(\mathbf x)
\right)^{-4}
\leq
\frac1{RS}
\sum_{r=1}^R\sum_{s=1}^S
L_{m,i,r,s}(\mathbf x)^{-4},
\]
and hence~\Eqref{eq:local-inverse-denominator} holds uniformly in
\(R,S\). 

It remains to verify the chain moment condition
\Eqref{eq:uniform-p-moments-localized}. Since
\(\kappa_1>0\),
\[
F_{K,N}(\mathbf y)
\ge
\frac{\kappa_1}{2}
\sum_{i=1}^N\|y_i-a\|^2.
\]
Moreover, if \(E_j\ge0\) are any finite collection of energies and
\(w_j\propto e^{-E_j/\varepsilon}\), then for every nondecreasing
\(\phi\),
\[
\sum_j w_j\phi(E_j)
\le
\frac1n\sum_{j=1}^n\phi(E_j),
\]
because \(\phi(E_j)\) and \(e^{-E_j/\varepsilon}\) are oppositely
ordered. Taking \(\phi(z)=z^{p/2}\), using the preceding coercivity and
the quadratic upper bound on \(F_{K,N}\), gives, for every \(p\ge2\),
\[
\sup_{\substack{R,S\ge1\\0\le m\le M-1}}
\mathbb E_{\Xi_m^{R,S}}
\left[
\|\widehat b_m^{R,S}(\mathbf x)\|^p
\right]
\le
C_p(1+\|\mathbf x\|^p).
\]
The same argument for the population Gibbs ratio gives
\[
\|b(t_m,\mathbf x)\|^p
\le
C_p(1+\|\mathbf x\|^p).
\]
Since the execution noise is finite-dimensional Gaussian, the exact
and finite Euler recursions therefore imply, by induction over the
finite number of time steps,
\[
\sup_{R,S\ge1}\max_{0\le m\le M}
\mathbb E\|C_m^{R,S,h}\|^p
+
\max_{0\le m\le M}
\mathbb E\|X_m^h\|^p
<\infty
\]
whenever the initial cloud has finite \(p\)-th moment. In particular,
this holds for every \(p<\infty\) under the deterministic collapsed
initialization used above. Hence, under the displayed
inverse-denominator condition, all hypotheses of
Remark~\ref{rem:polynomial-growth-finite-implementation} are satisfied;
one may take any \(p>2\), since the terminal growth exponent is \(r=2\).
\end{exam}
\section{Experimental details and additional figures}
\label{app:experiments}
\subsection{Three controlled toy problems}
\subsubsection{Four-mode MMD sanity check}
Let $\nu^\star=\frac14\sum_{s_1,s_2\in\{-1,1\}}\delta_{(s_1,s_2)}$ be the uniform law on the four corners of the square. We optimize $G(\mu):=\operatorname{MMD}_k^2(\mu,\nu^\star)$,
where
$\operatorname{MMD}_k^2(\mu,\nu)
:=
\iint k(x,x')\,\mu(dx)\mu(dx')
-
2\iint k(x,y)\,\mu(dx)\nu(dy)
+
\iint k(y,y')\,\nu(dy)\nu(dy')$. We use a Gaussian kernel \(k\), which is characteristic, so the unique minimizer is
\(\mu^\star=\nu^\star\). We compare SCMO with Adam point and Particle Adam. Adam point optimizes a
single state and is evaluated as a degenerate point measure, whereas
Particle Adam applies Adam directly to the same \(N=128\) particles using
gradients of the empirical MMD objective. SCMO uses \(N=128\), \(R=1\), \(S=256\), \(M=256\), \(L=1\), \(T=1\), \(\varepsilon=10^{-10}\), proposal scale \(\sigma_{\mathrm{prop}}=1.0\), and execution scale \(\sigma_{\mathrm{dyn}}=0.15\). The Gaussian MMD kernel bandwidth is \(\ell=0.7\). 

Table~\ref{tab:toy-four-mode-mmd} reports results over \(50\) independent runs. Reported metrics
are the final \(\operatorname{MMD}^2\), the number of occupied modes, the minimum mass assigned
to any target mode, and the \(L^1\) error of the four-mode mass vector. Specifically, let \(z_1,\ldots,z_4\) denote the four target modes and define $B_\rho(z_m):=\{x\in\mathbb R^2:\|x-z_m\|\le \rho\}, \,\,p_m:=\mu^N(B_\rho(z_m))$, $m = 1,\dots,4$, and $\mu^N:=\frac1N\sum_{i=1}^N\delta_{x_i}$ denote the final empirical measure. Thus \(p_m\) is the empirical mass assigned to a radius-\(\rho\) neighborhood of mode \(z_m\).
We count mode \(m\) as occupied if \(p_m\ge0.05\), define the minimum mode mass as
\(\min_m p_m\), and define the \(L^1\) mass error by $\sum_{m=1}^4\left|p_m-\frac14\right|$. Adam point is included as a degenerate point-measure baseline, while
Particle Adam serves as a gradient-oracle reference because the MMD
objective is smooth. As expected, Particle Adam achieves the lowest MMD
value. SCMO uses only objective evaluations and still
recovers all four modes in every run, with minimum mode mass \(0.229\pm0.010\) and \(L^1\) mass
error \(0.056\pm0.027\), indicating a nearly balanced four-mode empirical law.

\begin{table}[H]
\centering
\small
\setlength{\tabcolsep}{4pt}
\caption{
Four-mode MMD sanity check over \(50\) independent runs. Values are mean \(\pm\) standard
deviation.
}
\label{tab:toy-four-mode-mmd}
\begin{tabular}{lcccc}
\toprule
Method
&
\makecell{\(\mathrm{MMD}^2\)\\}
&
\makecell{Modes\\}
&
\makecell{Min. mode\\mass}
&
\makecell{\(L^1\) mass\\error}
\\
\midrule
Adam point \(\delta_x\)
&
\(7.50{\times}10^{-1}\pm0.00\)
&
--
&
--
&
--
\\
Particle Adam
&
\(1.32{\times}10^{-3}\pm8.21{\times}10^{-4}\)
&
\(4.0\pm0.0\)
&
\(0.222\pm0.012\)
&
\(0.073\pm0.025\)
\\
SCMO
&
\(1.49{\times}10^{-2}\pm2.49{\times}10^{-3}\)
&
\(4.0\pm0.0\)
&
\(0.229\pm0.010\)
&
\(0.056\pm0.027\)
\\
\bottomrule
\end{tabular}
\end{table}
\begin{figure}[H]
    \centering
    \includegraphics[width=0.5\linewidth]{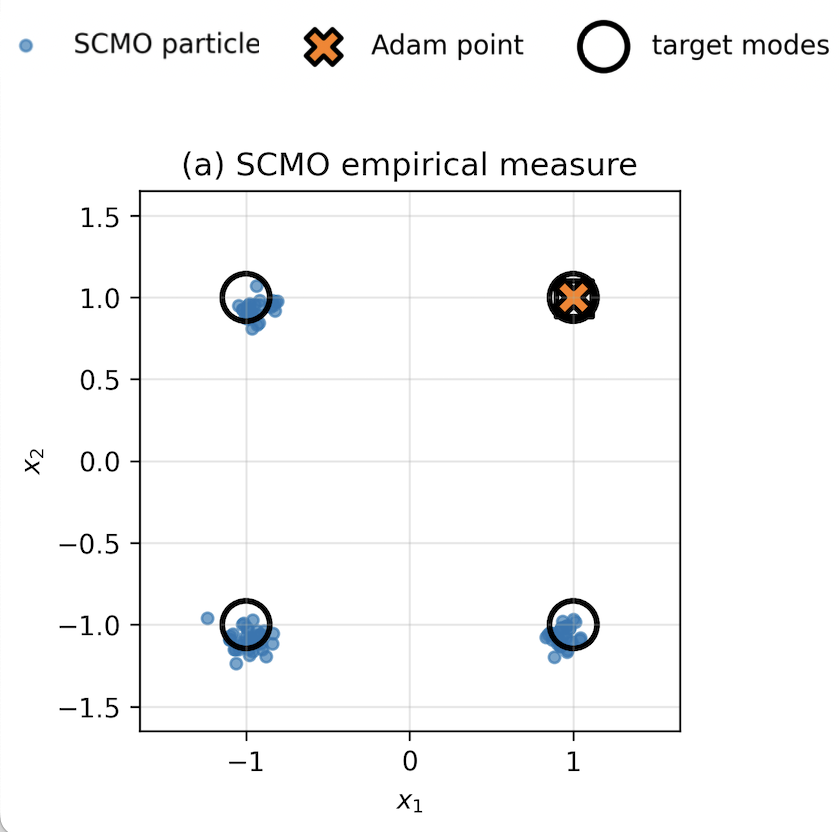}
    \caption{
    Representative SCMO empirical measure for the four-mode MMD sanity check.
    The target law is the uniform measure over the four marked modes \((\pm1,\pm1)\).
    Quantitative results over \(50\) runs are reported in Table~\ref{tab:toy-four-mode-mmd}.
    }
    \label{fig:toy-four-mode-mmd-cloud}
\end{figure}
\subsubsection{Asymmetric two-well escape}
We consider the two-dimensional asymmetric
two-well objective $
J(x)
=
c
-
\alpha_{\mathrm{near}}
\exp\left(-\frac{\|x-x_{\mathrm{near}}\|^2}{2\sigma^2}\right)
-
\alpha_{\mathrm{far}}
\exp\left(-\frac{\|x-x_{\mathrm{far}}\|^2}{2\sigma^2}\right)$, where
$
x_{\mathrm{near}}=(-1,0), x_{\mathrm{far}}=(1.8,0),
$
with \(c=0.30\), \(\alpha_{\mathrm{near}}=1.00\), \(\alpha_{\mathrm{far}}=1.35\), and
\(\sigma=0.22\). The nearby left well is easier to reach from the origin but is shallower, while the
farther right well has strictly lower objective value.

We run \(50\) independent trials. Adam is initialized from a small Gaussian perturbation around the
origin, with initialization scale \(0.05\), learning rate \(10^{-2}\), and \(2000\) steps. SCMO is
initialized from a collapsed particle cloud at the origin and uses \(N=128\), \(R=1\), \(S=128\), \(M=128\),
\(L=3\), horizon \(T=3\), temperature \(\varepsilon=10^{-10}\), proposal scale
\(\sigma_{\mathrm{prop}}=1.0\), and execution scale \(\sigma_{\mathrm{dyn}}=0.15\).

Table~\ref{tab:two-well-escape-results} shows that Adam consistently converges to the nearby
shallow well and never reaches the deeper basin. In contrast, SCMO transports almost all of its
empirical mass to the farther deep well: the final deep-well mass is \(0.973\pm0.014\), while the
shallow-well mass is only \(0.027\pm0.014\). The SCMO empirical law attains objective
\(-1.020\pm4.89\times10^{-3}\), and the best SCMO atom reaches \(-1.050\pm1.42\times10^{-4}\),
close to the bottom of the deeper well. This toy experiment illustrates long-range distributional
exploration: SCMO moves the particle law toward the globally better basin, whereas Adam commits
to the locally attractive shallow well.

\begin{table}[H]
\centering
\small
\setlength{\tabcolsep}{5pt}
\caption{
Asymmetric two-well escape benchmark over \(50\) independent runs.
Values are mean \(\pm\) standard deviation. Lower objective is better.
}
\label{tab:two-well-escape-results}
\begin{tabular}{lccc}
\toprule
Output
&
\makecell{Objective\\\(\downarrow\)}
&
\makecell{Deep-well\\mass/rate \(\uparrow\)}
&
\makecell{Shallow-well\\mass \(\downarrow\)}
\\
\midrule
Adam point
&
\(-0.700\pm5.86{\times}10^{-6}\)
&
\(0.000\pm0.000\)
&
\(1.000\pm0.000\)
\\
SCMO empirical law
&
\(-1.020\pm4.89{\times}10^{-3}\)
&
\(0.973\pm0.014\)
&
\(0.027\pm0.014\)
\\
\bottomrule
\end{tabular}
\end{table}

\begin{figure}[H]
    \centering
    \includegraphics[width=0.42\linewidth]{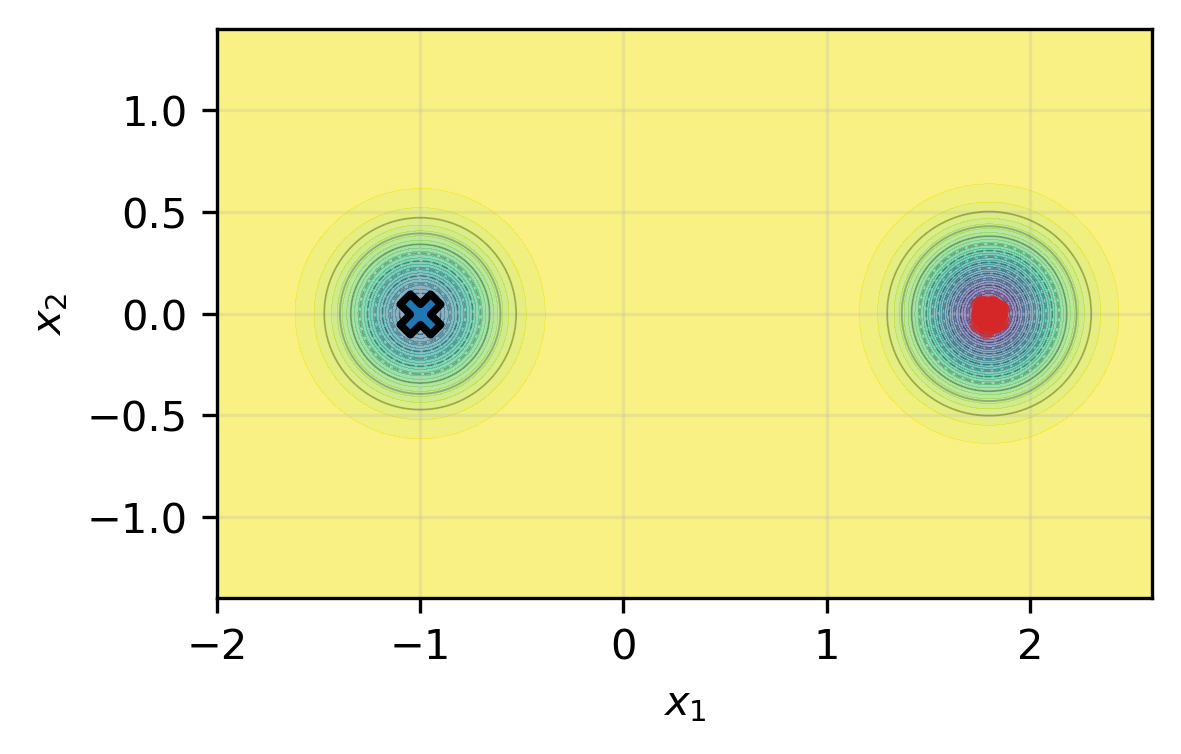}
    \caption{
    Representative final configuration for the asymmetric two-well escape toy. The background contours show the objective landscape, with darker regions indicating lower objective values. The left basin is closer to the initialization but shallower, while the right basin is farther but deeper. Red points are final SCMO particles, and the blue cross is the final Adam point. Adam remains in the nearby shallow well, whereas SCMO transports most of its empirical measure to the deeper well. Quantitative results over \(50\) runs are reported in Table~\ref{tab:two-well-escape-results}.
    }
    \label{fig:toy-escape-particles}
\end{figure}
\subsubsection{Nonsmooth plateau escape}

We use a simple two-dimensional nonsmooth objective to test whether SCMO can escape a flat region
where local first-order information is uninformative. The objective is
$
J(x)=1_{\{\|x\|\le r\}}
+
1_{\{\|x\|>r\}}
\min\left\{
\frac{\|x-c_1\|^2}{\beta},
\frac{\|x-c_2\|^2}{\beta}
\right\}$, with \(r=0.65\), \(\beta=6\), \(c_1=(-1.2,0.95)\), and \(c_2=(1.2,-0.95)\). We compare the performance of SCMO with Adam. The origin lies inside
the flat plateau, so Adam initialized at the origin receives no useful local gradient. SCMO is also
initialized from a collapsed cloud at the origin, but uses candidate sampling and exponential
reweighting to move particles toward the low-objective basins.

We run \(50\) independent trials. Adam uses learning rate \(10^{-2}\) and \(2000\) steps. SCMO uses
\(N=128\) particles, \(R=1\), \(S=128\) candidate replacements per particle, \(M=128\) inner steps,
\(L=3\) outer iterations, horizon \(T=3\), temperature \(\varepsilon=10^{-10}\), proposal scale
\(\sigma_{\mathrm{prop}}=1.0\), and execution scale \(\sigma_{\mathrm{dyn}}=0.15\).

Table~\ref{tab:nonsmooth-plateau-results} shows that Adam remains at the plateau value in every
run, while SCMO moves all particle mass outside the plateau and obtains a near-zero empirical
objective. The final SCMO cloud also splits approximately evenly between the two low-objective
basins. 
\begin{table}[H]
\centering
\small
\setlength{\tabcolsep}{5pt}
\caption{
Nonsmooth plateau escape over \(50\) independent runs. Values are mean \(\pm\) standard deviation.
For SCMO, ``mass outside plateau'' is the final empirical mass outside \(\{x:\|x\|\le r\}\).
}
\label{tab:nonsmooth-plateau-results}
\begin{tabular}{lccc}
\toprule
Output
&
\makecell{Objective\\}
&
\makecell{Escape / mass outside\\plateau}
&
\makecell{Mass in two\\wells}
\\
\midrule
Adam point
&
\(1.00\pm0.00\)
&
\(0.00\pm0.00\)
&
--
\\
SCMO empirical law
&
\(2.45{\times}10^{-4}\pm2.12{\times}10^{-5}\)
&
\(1.00\pm0.00\)
&
\(0.500\pm0.050\;/\;0.500\pm0.050\)
\\
\bottomrule
\end{tabular}
\end{table}

\begin{figure}[H]
    \centering
    \includegraphics[width=0.95\linewidth]{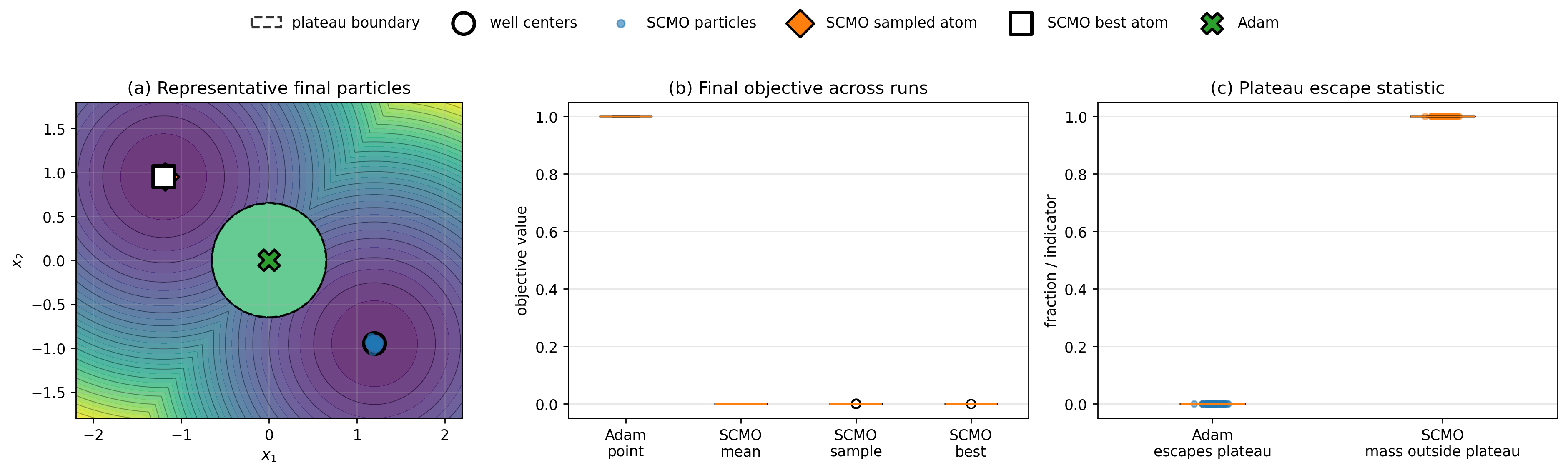}
    \caption{
    Nonsmooth plateau escape sanity check. 
    (a) Representative final configuration. Adam remains at the origin inside the flat plateau,
    while SCMO transports its particles into the low-objective basins.
    (b) Final objective values over \(50\) runs.
    (c) Plateau escape statistic: Adam is evaluated by whether the final point leaves the plateau,
    while SCMO is evaluated by final empirical mass outside the plateau.
    }
    \label{fig:nonsmooth-plateau}
\end{figure}
\subsection{Function-Space Optimization with PDE Energies}
\paragraph{PDE problem setting.}
We next evaluate SCMO on five function-space optimization problems derived from standard phase-field and variational energies. The suite is designed to test recovery of structured function-valued laws under smooth, nonsmooth, discontinuous, and metastable objectives.

Specifically, let \(H=L^2([0,1])\), and represent each candidate probability measure on \(H\) by an \(N\)-particle empirical law. Each particle is expanded using \(K=32\) elements of the orthonormal Neumann cosine basis
\[
e_0(x)=1,
\qquad
e_k(x)=\sqrt{2}\cos(k\pi x),
\quad k=1,\ldots,K-1.
\]
Thus, for the \(i\)-th particle,
\begin{align}
\label{u_c}
u_i(x)
=
\sum_{k=0}^{K-1}c_{ik}e_k(x)
=
c_{i0}
+
\sqrt{2}\sum_{k=1}^{K-1}c_{ik}\cos(k\pi x).
\end{align}
This representation automatically satisfies the homogeneous Neumann boundary conditions $u_i'(0)=u_i'(1)=0$, and its constant coefficient equals the spatial mean, $c_{i0}=\int_0^1u_i(x)\,\mathrm dx$. We evaluate the functions and their energies on the uniform grid $x_g=\frac{g}{127},
\,g=0,\ldots,127$, using trapezoidal quadrature.

The five problems isolate complementary optimization challenges. P1 is a smooth Allen--Cahn reference problem testing recovery of a balanced two-phase law supported near \(u\equiv-1\) and \(u\equiv+1\). P2 tests optimization when both the particlewise total-variation energy and the law-level absolute-moment penalty are nonsmooth. P3 tests simultaneous recovery of three separated phases under a discontinuous and interacting occupancy objective with target masses \((0.25,0.50,0.25)\). P4 tests escape from the prescribed shallow local phase \(u\equiv-1\) toward the unique global phase \(u\equiv+1\) in a smooth nonconvex landscape. Finally, P5 tests both escape from the metastable central phase \(u\equiv0\) and distributional splitting between the two outer global phases \(u\equiv\pm1\), with target masses \((0.5,0,0.5)\).

We now detail the explicit objective functional for each problem. Recall~\Eqref{u_c}, for a coefficient cloud $c=(c_1,\ldots,c_N)\in(\mathbb R^K)^N$, define its empirical function-valued law and constant-mode moments by
$\mu_c^N=\frac1N\sum_{i=1}^N\delta_{u_i},
\,m_q(c)=\frac1N\sum_{i=1}^N c_{i0}^{\,q}.
$ For each problem \(p\), the implemented optimization problem is
\[
\min_{c\in(\mathbb R^K)^N}G_p(\mu_c^N),
\]
where
\[
G_p(\mu_c^N)
=
\frac1N\sum_{i=1}^N E_p(u_i)+\Phi_p(c).
\]
Here, \(E_p\) is the particlewise function-space energy and \(\Phi_p\) is a law-level interaction term.

The five implemented objectives are as follows:
\begin{align*}
E_1(u)
&=
\frac{\eta_1}{2}\int_0^1 |u'(x)|^2\,\mathrm dx
+\frac{1}{4\eta_1}\int_0^1 \bigl(u(x)^2-1\bigr)^2\,\mathrm dx,
\\
\Phi_1(c)
&=5m_1(c)^2,
\qquad
\eta_1=0.05;
\\[1mm]
E_2(u)
&=
\eta_2\int_0^1 |u'(x)|\,\mathrm dx
+\frac{1}{4\eta_2}\int_0^1 \bigl(u(x)^2-1\bigr)^2\,\mathrm dx,
\\
\Phi_2(c)
&=5|m_1(c)|,
\qquad
\eta_2=0.04;
\\[1mm]
E_3(u)
&=
\frac{\eta_3}{2}\int_0^1 |u'(x)|^2\,\mathrm dx
+\frac{4}{\eta_3}\int_0^1
\bigl(u(x)+1\bigr)^2u(x)^2\bigl(u(x)-1\bigr)^2\,\mathrm dx,
\\
\Phi_3(c)
&\text{ defined below},
\qquad
\eta_3=0.06;
\\[1mm]
E_4(u)
&=
\frac{\eta_4}{2}\int_0^1 |u'(x)|^2\,\mathrm dx+
\frac{1}{\eta_4}\int_0^1
\left[
\frac14\bigl(u(x)^2-1\bigr)^2
-\beta_4\left(\frac32u(x)-\frac12u(x)^3\right)
+\beta_4
\right]\mathrm dx,
\\
\Phi_4(c)
&=0,
\qquad
(\eta_4,\beta_4)=(0.05,0.12);
\\[1mm]
E_5(u)
&=
\frac{\eta_5}{2}\int_0^1 |u'(x)|^2\,\mathrm dx
+
\frac{1}{\eta_5}\int_0^1
\left[
u(x)^2\bigl(u(x)^2-1\bigr)^2
+\beta_5\bigl(1-u(x)^2\bigr)^2
\right]\mathrm dx,
\\
\Phi_5(c)
&=
5\left\{
m_1(c)^2+\bigl[m_2(c)-1\bigr]^2
\right\},
\qquad
(\eta_5,\beta_5)=(0.05,0.02).
\end{align*}

For P3, let
\[
z=(-1,0,1),
\qquad
w=(0.25,0.50,0.25)
\]
denote the three target phases and their prescribed masses. Define the radius-gated empirical mass around phase \(z_j\) by
\[
q_j(c)
=
\frac1N\sum_{i=1}^N
\mathbf 1\!\left\{
|c_{i0}-z_j|\le 0.28
\right\},
\qquad j=1,2,3.
\]
Because the phase centers are separated by one and the gating radius is \(0.28\), the three neighborhoods are disjoint. The law-level interaction for P3 is
\[
\Phi_3(c)
=
2\sum_{j=1}^3|q_j(c)-w_j|
-
4\,\mathbf 1\!\left\{
q_1(c)\ge0.15,\;
q_2(c)\ge0.30,\;
q_3(c)\ge0.15
\right\}.
\]
Its first term penalizes deviations from the target phase masses, whereas the discontinuous reward tests whether an optimizer can satisfy all three occupancy requirements simultaneously.

\paragraph{Evaluation criteria.}
The primary performance metric is the implemented objective \(G_p\). We additionally report the structural success for P3, P4, P5. P3 is considered successful when all three occupancy requirements $q_1(c)\ge0.15,\,q_2(c)\ge0.30,\,q_3(c)\ge0.15$ are satisfied. P4 success requires at least \(0.90\) nearest-phase mass at the global phase \(+1\). P5 success requires at least \(0.90\) combined nearest-phase mass in the two outer phases and an \(\ell_1\) mass error of at most \(0.20\) relative to the target \((0.5,0,0.5)\).

\paragraph{Baselines.}
We compare SCMO with three complementary controls. Particle Adam applies Adam~\citep{kingma2015adam} jointly to all \(NK\) coefficients of the empirical cloud, providing a gradient-oracle baseline without restricting the particles to a parametric family. CEM cloud~\citep{deboer2005crossentropy} is a derivative-free whole-cloud optimizer: each candidate is a complete \(N\)-particle coefficient cloud sampled from a coordinatewise Gaussian proposal, whose mean and standard deviation are updated from the elite candidates. The diagonal-Gaussian baseline restricts the particle law to $c=m+\exp(\ell)\odot\varepsilon,\,\varepsilon\sim\mathcal N(0,I_K)$, and optimizes \(m\) and \(\ell\) using reparameterized Monte Carlo gradients and Adam~\citep{kingma2014autoencoding}. Fresh samples are used for gradient estimation, while a fixed \(N\)-sample common-random-number cloud is used to evaluate and select iterates; this cloud is constructed to reproduce the shared initial empirical law exactly. Particle Adam and the Gaussian family use the gradients supplied by the implemented objective; in particular, the hard occupancy indicators in P3 have zero gradient almost everywhere. These baselines test SCMO against direct empirical-cloud gradient optimization, parametric-law optimization, and derivative-free joint-cloud search.

\paragraph{Hyperparameters and budget accounting.}
All methods use \(K=32\) cosine coefficients, a 128-point spatial grid, \(N=256\) particles in each reported empirical cloud, and the same initial cloud for each task and random seed. SCMO additionally uses \(R=2\) context clouds, \(S=128\) replacement candidates per particle and context, \(M=128\) inner steps, time horizon \(T=2\), and Gibbs softmin temperature \(\varepsilon=10^{-10}\), giving an inner step size \(\Delta t=T/M=1/64\).

The task-specific settings were selected on development runs for each method's best performance and frozen before evaluation. The resulting settings are:
\begin{center}
\small
\begin{tabular}[H]{crrrrcr}
\toprule
Task
& \(L\)
& \(\sigma_{\rm prop}\)
& \(\sigma_{\rm dyn}\)
& Particle Adam lr
& CEM \(B/\rho/\sigma_0\)
& Gaussian lr\\
\midrule
P1 & 30  & 1.0 & 0.20 & 0.05 & \(32/.05/.08\)  & .08\\
P2 & 500 & 0.1 & 0.01 & 0.10 & \(32/.05/.08\)  & .02\\
P3 & 30  & 1.0 & 0.20 & 0.10 & \(256/.20/.35\) & .08\\
P4 & 50  & 0.4 & 0.01 & 0.10 & \(32/.05/.08\)  & .08\\
P5 & 30  & 1.0 & 0.20 & 0.10 & \(32/.05/.08\)  & .02\\
\bottomrule
\end{tabular}
\end{center}
Here, \(B\) is the number of complete \(N\)-particle clouds sampled in each CEM iteration, \(\rho\) is the elite fraction, and \(\sigma_0\) is the coordinatewise initial proposal standard deviation. CEM uses a minimum standard deviation of \(0.005\) and smoothing coefficient \(0.25\), where \(0.25\) is the weight assigned to the new elite estimate. Particle Adam and the diagonal-Gaussian family use at most \(3{,}000\) optimizer updates. Each Gaussian update uses \(512\) reparameterized Monte Carlo training samples, and its initial standard deviation is floored at \(0.1\); this floor is not imposed after optimization begins.

We exactly match the configured derivative-free search budgets of SCMO and CEM using particlewise energy evaluations as the common work unit. For task \(p\), one SCMO inner update evaluates \(RN\) context particles and \(RNS\) proposed replacements. Its configured search budget is therefore $\mathcal B_p^{\rm SCMO}
=
L_pMNR(S+1)$. If CEM samples \(B_p\) complete \(N\)-particle clouds per iteration, we set $I_p^{\rm CEM}
=
\left\lceil
\frac{L_pMR(S+1)}{B_p}
\right\rceil$. For every setting in the table, \(B_p\) divides \(L_pMR(S+1)\) exactly, and hence
$\mathcal B_p^{\rm CEM}
=
I_p^{\rm CEM}B_pN
=
\mathcal B_p^{\rm SCMO}$. The resulting CEM iteration counts for P1--P5 are, respectively,
$
30{,}960,\,
516{,}000,\,
3{,}870,\,
51{,}600,\,
30{,}960$. The corresponding matched configured search-particle budgets are
$
253{,}624{,}320,\,
4{,}227{,}072{,}000,\,
253{,}624{,}320,\,
422{,}707{,}200,\,
253{,}624{,}320$. These are task-specific configured maxima; in particular, P2 receives a substantially larger maximum because it uses \(L=500\). Particle Adam and the diagonal-Gaussian family use gradient oracles and are therefore not equated with the derivative-free particle-evaluation count. Both retain their native cap of \(3{,}000\) optimizer updates, and their objective histories had plateaued by their final updates across all tasks.

Plateau stopping can trigger only at or after \(60\%\) of each method's configured native budget and only when no meaningful improvement has occurred over \(20\%\) of that budget. An improvement is considered meaningful when it is at least
$
\max\!\left\{
10^{-4},
10^{-4}\max\bigl(1,|G_{\rm anchor}|\bigr)
\right\}$, where \(G_{\rm anchor}\) is the objective at the most recent meaningful improvement. 

\paragraph{Results.} 
The results are summarized in Table~\ref{tab:pde-five-regime}. SCMO has the lowest mean objective on P1--P3 and the second-lowest on P4--P5.  Its
descriptive mean within-seed rank is 1.45, and it is among the top two methods
in all 100 task--seed comparisons.  On P3, SCMO encounters and returns a cloud
satisfying all three quotas in 20/20 trials, whereas every baseline does so in
0/20.  On P4, SCMO and the Gaussian family both reach the global phase in
20/20 trials, with the Gaussian family attaining the lower objective.  On P5,
SCMO alone satisfies the fixed balanced-global criterion in 20/20
trials, although Particle Adam attains the lower objective. Together, these results demonstrate SCMO's strong performance across smooth,
nonsmooth nonconvex, discontinuous interacting, and metastable function-space
objectives, particularly when success requires recovering a prescribed
multiphase law rather than merely attaining a low scalar objective.
Figure~\ref{fig:pde-representative-runs} shows deterministically selected
median-objective SCMO clouds for all five tasks.
\begin{table}[H]
\label{pde_result}
\centering
\footnotesize
\setlength{\tabcolsep}{4.7pt}
\caption{Five function-space tasks, with 20 paired seeds per task. Panel A reports the objective (mean $\pm$ standard deviation; lower is better); bold and underline mark the best and second-best means. Panel B reports strict structural success.}
\label{tab:pde-five-regime}
\begin{tabular}{lrrrr}
\toprule
\multicolumn{5}{l}{\textbf{A. Objective value}}\\
Task & SCMO & Particle Adam & CEM cloud & Gaussian family\\
\midrule
P1 Smooth Allen--Cahn & \bm{$0.128\pm0.003$} & \underline{$0.184\pm0.017$} & $1.734\pm0.042$ & $0.996\pm0.211$\\
P2 TV--Allen--Cahn & \bm{$0.984\pm0.028$} & $1.747\pm0.031$ & \underline{$1.069\pm0.020$} & $1.118\pm0.279$\\
P3 Hard three-phase quotas & \bm{$-2.541\pm0.635$} & $2.174\pm0.034$ & $2.835\pm0.083$ & \underline{$2.009\pm0.001$}\\
P4 Tilted local/global & \underline{$0.00883\pm0.00008$} & $3.846\pm0.157$ & $5.654\pm0.007$ & \bm{$0.00109\pm0.00010$}\\
P5 Metastable balanced law & \underline{$0.342\pm0.004$} & \bm{$0.264\pm0.041$} & $5.263\pm0.131$ & $5.271\pm0.098$\\
\midrule
Mean within-seed rank $\downarrow$ & 1.45 & 2.60 & 3.57 & 2.38\\
\midrule
\multicolumn{5}{l}{\textbf{B. Strict structural success}}\\
P3 quota success & 20/20 & 0/20 & 0/20 & 0/20\\
P4 global-phase success & 20/20 & 0/20 & 0/20 & 20/20\\
P5 balanced-global success & 20/20 & 0/20 & 0/20 & 0/20\\
\bottomrule
\end{tabular}
\end{table}
\begin{figure}[H]
    \caption{Representative runs for function-space optimization with PDE energies}
    \centering
    \includegraphics[width=1\linewidth]{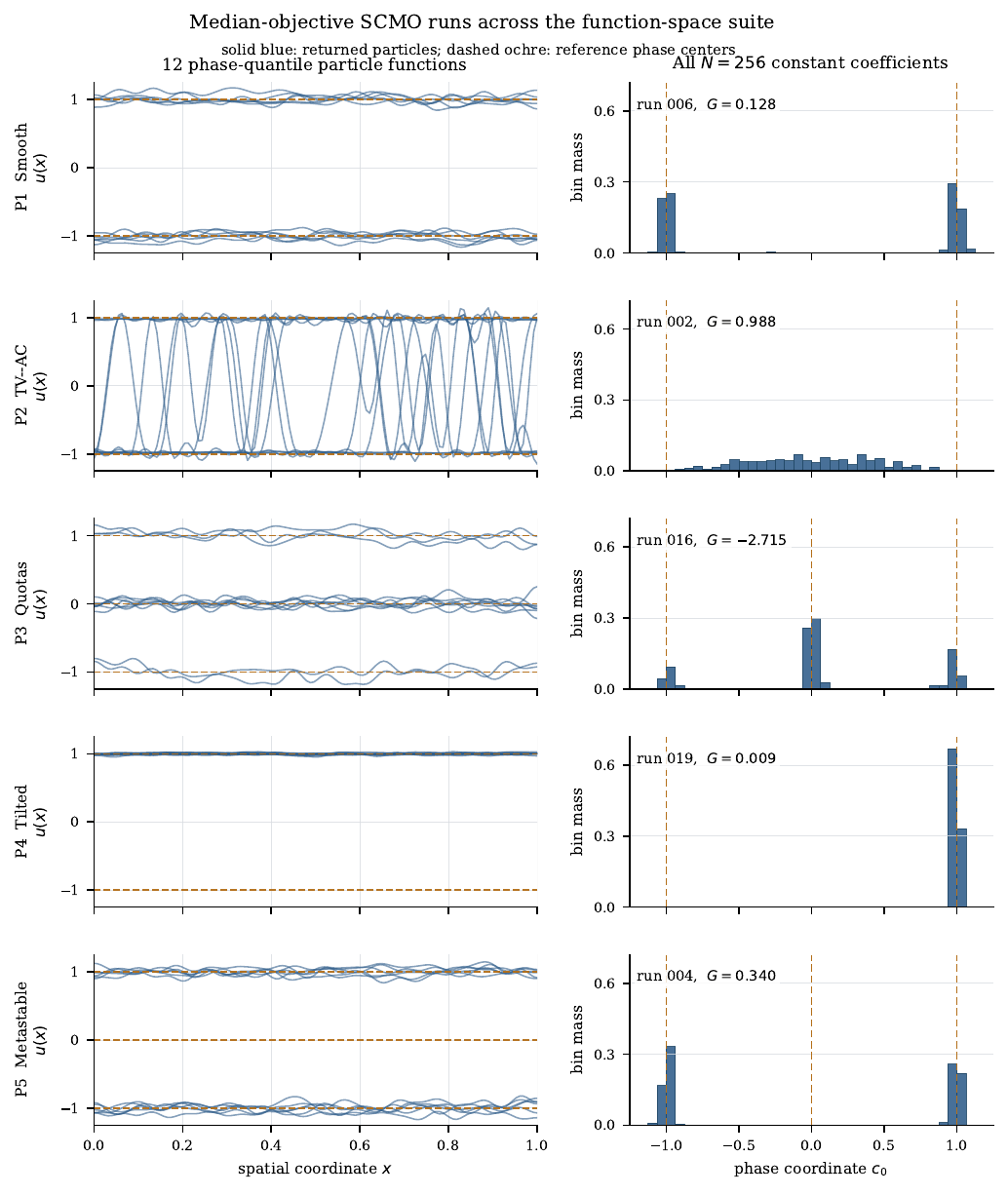}
        \label{fig:pde-representative-runs}

\end{figure}
\subsection{Trajectory-Law Optimization Through Narrow Passages}
\label{sec:trajectory-law-experiments}

\paragraph{Trajectory problem setting.}
We next consider open-loop trajectory optimization in planar environments with
obstacles.  A particle is a sequence of $H=42$ two-dimensional controls,
$a_i=(a_{i,0},\ldots,a_{i,H-1})\in(\mathbb R^2)^H$, and induces the discrete
trajectory
\[
    x_{i,0}=x_{\mathrm{start}},\qquad
    x_{i,t+1}=x_{i,t}+\Delta t\,a_{i,t},
    \qquad \Delta t=0.15.
\]
The optimization variable is the empirical law
\(\mu_a^N=N^{-1}\sum_{i=1}^N\delta_{a_i}\), represented by \(N=128\)
trajectories. Tasks T1--T4 start at \((-4.8,0)\), whereas T5 starts at
\((-4.8,0.75)\). In every task, the trajectories must pass through one or two
openings in a rectangular wall before reaching a circular goal.
Table~\ref{tab:trajectory-geometry} records the task geometry; ``strip''
denotes an additional rectangular pre-wall obstacle.

\begin{table}[htbp]
\centering
\small
\setlength{\tabcolsep}{4.2pt}
\caption{Geometry of the five trajectory-law tasks. Each interval in the gap
column denotes a vertical opening in the wall. The start is \((-4.8,0)\) for
T1--T4 and \((-4.8,0.75)\) for T5.}
\label{tab:trajectory-geometry}
\begin{tabular}{llclll}
\toprule
Task & Goal (tolerance) & Wall \(x\)-range & Wall \(y\)-range
& Gap interval(s) & Pre-wall strip \\
\midrule
T1 & \((4.6,0)\) \((0.50)\) & \([-0.55,0.55]\) & \([-3.2,3.2]\)
   & \([-0.80,0.80]\) & none \\
T2 & \((4.6,0.55)\) \((0.34)\) & \([-0.90,0.90]\) & \([-3.2,3.2]\)
   & \([1.16,1.39]\) & \([-2.55,-1.00]\times[-0.40,0.40]\) \\
T3 & \((4.6,0)\) \((0.38)\) & \([-0.75,0.75]\) & \([-3.2,3.2]\)
   & \([1.05,1.45]\), \([-1.45,-1.05]\) & none \\
T4 & \((4.6,0)\) \((0.38)\) & \([0.12,1.48]\) & \([-3.5,3.5]\)
   & \([1.38,1.70]\), \([-1.70,-1.38]\) & none \\
T5 & \((4.6,0.75)\) \((0.50)\) & \([-0.40,0.40]\) & \([-3.4,3.4]\)
   & \([0.25,1.10]\), \([-0.30,0.15]\) & none \\
\bottomrule
\end{tabular}
\end{table}

The five tasks isolate distinct optimization regimes. T1 is a wide single-passage reference problem with a feasible initialization. T2 narrows the opening and tests precise local feasibility. T3 has reflection-symmetric upper and lower passages together with continuous-in-mass law-level terms that reward successful mass, progress along both routes, and route balance; it therefore tests multimodal-law recovery without a discontinuous route-coverage bonus. T4 retains the symmetric two-route geometry but introduces discontinuous success and route-quota rewards, thereby testing an interacting, discontinuous objective. T5 tests escape from a feasible but inferior lower-route basin.

\paragraph{Trajectory objective.}
For task \(p\), let \([x_p^-,x_p^+]\times[y_p^-,y_p^+]\) be the
rectangular wall and let \(\mathcal I_p\) be the union of its enabled vertical
gap intervals, as specified in Table~\ref{tab:trajectory-geometry}.  The solid
part of the wall is
\[
\mathcal W_p
=
\left\{
(x,y):
x\in[x_p^-,x_p^+],\
y\in[y_p^-,y_p^+],\
y\notin\mathcal I_p
\right\}.
\]
Let \(\mathcal S_p\) denote the additional pre-wall rectangular strip on T2
and set \(\mathcal S_p=\varnothing\) on T1 and T3--T5.  The complete obstacle
set is
\[
\mathcal O_p=\mathcal W_p\cup\mathcal S_p.
\]
Let \(g_p\) and \(r_p\) denote the goal center and tolerance in task \(p\).
For particle \(i\), define
\begin{align*}
c_i
&=
\mathbf 1\!\left\{
x_{i,t}\in\mathcal O_p
\text{ for some }t\in\{1,\ldots,H\}
\right\},\\
d_i
&=
\left\lVert x_{i,H}-g_p\right\rVert_2^2,\\
e_i
&=
\sum_{t=0}^{H-1}\left\lVert a_{i,t}\right\rVert_2^2,\\
v_i
&=
\sum_{t=0}^{H-2}
\left\lVert a_{i,t+1}-a_{i,t}\right\rVert_2^2,\\
s_i
&=
\mathbf 1\!\left\{
c_i=0,\
\left\lVert x_{i,H}-g_p\right\rVert_2\le r_p
\right\}.
\end{align*}

For any scalar particle statistic \(z_i\), write
\(\bar z=N^{-1}\sum_i z_i\). On the two-route tasks, each successful
trajectory is assigned an upper or lower label according to its interpolated
height at the wall midpoint. The unconditional successful-route masses are
\begin{align*}
p_u
&=
\frac1N\sum_{i=1}^N
\mathbf 1\!\left\{
s_i=1,\ i\text{ uses the upper gap}
\right\},\\
p_l
&=
\frac1N\sum_{i=1}^N
\mathbf 1\!\left\{
s_i=1,\ i\text{ uses the lower gap}
\right\}.
\end{align*}
Thus, a failed or unclassified trajectory contributes to neither route mass.
For T3--T4, \(h_i\) is the mean squared vertical distance from the trajectory
to the interior of its nearest enabled passage, evaluated at five equally
spaced cross-sections of the wall. Each side of a passage is inset by \(20\%\)
of its width before this distance is computed. This shaping term is invariant
under vertical reflection and contains no preferred route sign.

The implemented task objective is
\begin{align}
G_p(\mu_a^N)
={}&
\lambda_{\mathrm{col},p}\bar c
+\lambda_{\mathrm{g},p}\bar d
+\lambda_{\mathrm{e},p}\bar e
+0.012\bar v
+w_{h,p}\bar h
-w_{s,p}\bar s
\nonumber\\
&-w_{0,p}\mathbf 1\!\left\{\bar s\ge c_{0,p}\right\}
\nonumber\\
&-w_{c,p}\mathbf 1\!\left\{
p_u\ge q_{u,p},\ p_l\ge q_{l,p}
\right\}
\nonumber\\
&-w_{j,p}\min(p_u,q_{u,p})
-w_{j,p}\min(p_l,q_{l,p})
+w_{b,p}(p_u-p_l)^2,
\label{eq:trajectory-cloud-objective}
\end{align}
where terms with zero weights are omitted. The task-specific values are
\begin{center}
\small
\setlength{\tabcolsep}{3.2pt}
\begin{tabular}{crrrrrrrrrr}
\toprule
Task
& \(\lambda_{\rm col}\)
& \((\lambda_{\rm g},\lambda_{\rm e})\)
& \(c_0\)
& \(w_0\)
& \(w_s\)
& \(w_c\)
& \(w_j\)
& \(w_b\)
& \(w_h\)
& \((q_u,q_l)\) \\
\midrule
T1 & 6 & \((1.1,.02)\) & .20 & 1 & 0   & 0    & 0 & 0   & 0 & \((0,0)\) \\
T2 & 6 & \((1.1,.02)\) & .25 & 1 & 0   & 0    & 0 & 0   & 0 & \((0,0)\) \\
T3 & 6 & \((1.1,.02)\) & --  & 0 & 1.5 & 0    & 1 & .75 & 1 & \((.20,.20)\) \\
T4 & 6 & \((1.1,.02)\) & .45 & 1 & 0   & 1.25 & 1 & .35 & 1 & \((.15,.15)\) \\
T5 & 7 & \((5.0,.04)\) & .30 & 1 & 0   & 0    & 0 & 0   & 0 & \((0,0)\) \\
\bottomrule
\end{tabular}
\end{center}
In particular, T5 increases the terminal-error and control-effort weights to
\(5.0\) and \(0.040\), respectively, while setting every route-specific
objective weight to zero. Its objective therefore does not explicitly favor
the upper or lower route. T5 initializes every method around the same feasible but inferior lower-route
template, whose wall-crossing height is \(y=-0.075\), while the start and goal
at \(y=0.75\) are aligned with the more direct upper passage; it therefore
tests escape from the initial route basin under route-neutral proposals.

\paragraph{Evaluation criteria.}
The primary metric is the implemented objective \(G_p(\mu_a^N)\), for which
lower is better. We additionally report successful mass \(\bar s\),
all-particle success, and the following task-specific structural criteria:
\(\bar s\ge .20\) on T1; \(\bar s\ge .25\) on T2;
\(\bar s=1\) and \(p_u,p_l\ge .20\) on T3;
\(\bar s\ge .45\) and \(p_u,p_l\ge .15\) on T4; and
\(\bar s\ge .30\) and \(p_u\ge .90\) on T5. The T3 requirement deliberately
combines full success with two-route mass, so merely placing particles near
both gaps is insufficient. The T5 upper-route requirement is an evaluation
criterion rather than an optimized route reward: \(G_5\) contains no term
depending on \(p_u\) or \(p_l\). It therefore measures whether the optimizer
escapes the initial lower-route basin and discovers the geometrically
preferred upper passage without being directly rewarded for its route label.

\paragraph{Baselines.}
We compare SCMO with three derivative-free controls, all initialized from the same empirical cloud within each task and seed and all using route-neutral proposals. Greedy whole-cloud random search~\citep{solis1981randomsearch} samples batches of complete \(N\)-trajectory perturbation clouds around the incumbent and accepts the lowest-objective proposal only when it improves \(G_p\). Whole-cloud CEM~\citep{deboer2005crossentropy} maintains a coordinatewise Gaussian proposal over the complete empirical cloud, evaluates \(G_p\) for every sampled cloud, and updates its mean and standard deviation from the elite clouds. The two-component MPPI-style optimizer~\citep{williams2017mppi} maintains two Gaussian distributions over open-loop control sequences. Each component is updated using Gibbs weights computed from point-trajectory costs; equal numbers of trajectories are then sampled from the two components to form an \(N\)-particle cloud, which is evaluated and selected using \(G_p\). This method is an open-loop MPPI-style law optimizer rather than a canonical receding-horizon controller. These controls test SCMO against greedy whole-cloud search, elite-based joint-cloud adaptation, and Gibbs-weighted parametric trajectory sampling.

\paragraph{Hyperparameters and budget accounting.}
All SCMO runs use $N=128$, $R=4$, $S=96$, $M=90$, $L=13$, algorithmic horizon
$T_{\rm alg}=1$, $\varepsilon=10^{-4}$, and covariance-coordinate decay $0.035$.
The task-specific proposal and diffusion scales are $(1,.20)$ on T1--T2,
$(.60,.035)$ on T3--T4, and $(.30,.01)$ on T5. The task-specific settings were selected on development runs for each method's best performance and frozen before evaluation. The resulting settings are reported in Table~\ref{tab:trajectory-hyperparameters}.
\begin{table}[H]
\centering
\scriptsize
\setlength{\tabcolsep}{3.1pt}
\caption{Task-specific trajectory optimizer parameters.  CEM entries give
initial $(x,y)$ standard deviations, elite fraction, and smoothing.  MPPI
entries give $(x,y)$ noise standard deviations, temperature, and mean
smoothing.}
\label{tab:trajectory-hyperparameters}
\begin{tabular}{cccccccc}
\toprule
Task & SCMO $(\sigma_{\rm prop},\sigma_{\rm dyn})$
& RS batch & RS scale
& CEM $(\sigma_x,\sigma_y)$/elite/smooth
& MPPI $(\sigma_x,\sigma_y)$/temp/smooth
 \\
\midrule
T1 & $(1,.20)$     & 12 & .0025 & $(.10,.10)$/.20/.70 & $(.03,.03)$/.01/.25 \\
T2 & $(1,.20)$     & 12 & .0100 & $(.10,.10)$/.20/.50 & $(.15,.80)$/.08/.25\\
T3 & $(.60,.035)$  & 12 & .0250 & $(.05,.80)$/.10/.50 & $(.10,.20)$/.02/.25\\
T4 & $(.60,.035)$  & 12 & .0250 & $(.05,.80)$/.10/.50 & $(.10,.40)$/.04/.25\\
T5 & $(.30,.010)$  & 53 & .0050 & $(.03,.12)$/.10/.50 & $(.03,.15)$/.02/.25\\
\bottomrule
\end{tabular}
\end{table}

Whole-cloud CEM uses population 192, re-inflates every 25 iterations by $1.25$,
and has a minimum standard deviation of $.03$ on T1--T2 and $.02$ on T3--T5.
The two-component MPPI-style optimizer uses 384 sampled trajectories per
component, standard-deviation smoothing $.15$ on T1--T4 and $.10$ on T5,
re-inflation every 20 iterations by $1.15$, and minimum $(x,y)$ standard
deviations $(.03,.05)$ on T1--T2, $(.02,.02)$ on T3--T4, and $(.02,.03)$ on
T5. Random search uses 37,928 updates on T1--T4 and 8,588 on T5.

We match the maximum number of simulated trajectories rather than nominal
iterations.  SCMO's exact configured cap is $B_{\rm traj}
=N+LMN\{R(S+1)+1\}
=58{,}256{,}768$ trajectory rollouts, and the baseline iteration ceilings above are chosen so
that every method has this same cap; the last atomic batch is shortened when
necessary.  Budget-aware plateau stopping is shared across methods.  It can
activate only after $40\%$ of the configured cap and requires no significant
incumbent improvement for $15\%$ of the cap, with improvement threshold
$\max\{10^{-4},10^{-4}\max(1,|G_{\rm anchor}|)\}$.  

\paragraph{Results.}
The results are summarized in Table~\ref{tab:trajectory-results}. SCMO attains
the lowest mean objective on T2--T5, including the explicitly law-valued
two-route tasks T3 and T4. It satisfies the strict T3 and T4 structural
criteria in all \(20\) runs; Mixture MPPI satisfies the T4 criterion in
\(14/20\) runs, and no baseline satisfies the full T3 criterion. On the narrow
T2 task, SCMO reaches mean successful mass \(.996\pm.006\), compared with
\(.516\pm.053\) for Mixture MPPI and zero for Random Search and CEM. On T5,
SCMO attains the lowest mean objective, \(2.7152\pm0.0001\); SCMO and Mixture
MPPI satisfy the strict upper-route criterion in all \(20\) runs, whereas
Random Search and CEM satisfy it in none. The descriptive mean within-seed
ranks are \(1.50\) for SCMO, \(2.20\) for Mixture MPPI, \(2.90\) for CEM, and
\(3.40\) for Random Search. Together, these results show that SCMO is reliable
on the easy reference problem and performs strongly on narrow-feasibility,
multimodal-law, and discontinuous interacting objectives, while also
discovering the geometrically better route from an inferior initialization
under route-neutral proposals.

\begin{table}[htbp]
\centering
\footnotesize
\setlength{\tabcolsep}{4.6pt}
\caption{Five trajectory-law tasks with 20 paired seeds per task.  Panel A
reports the objective (mean $\pm$ sample standard deviation; lower is better);
bold and underline mark the best and second-best means.  Panel B reports the
strict task-specific structural criterion.}
\label{tab:trajectory-results}
\begin{tabular}{lrrrr}
\toprule
\multicolumn{5}{l}{\textbf{A. Objective value}}\\
Task & SCMO & Random search & CEM cloud & Mixture MPPI \\
\midrule
T1 Easy passage
& $0.886\pm0.002$ & $\bm{0.846\pm0.000}$ & $0.886\pm0.002$
& $\underline{0.848\pm0.000}$ \\
T2 Narrow passage
& $\bm{1.097\pm0.028}$ & $7.900\pm0.002$ & $7.853\pm0.000$
& $\underline{3.188\pm0.416}$ \\
T3 Two-route multimodal
& $\bm{0.080\pm0.006}$ & $8.005\pm0.252$ & $3.930\pm0.322$
& $\underline{1.360\pm0.392}$ \\
T4 Hard route balance
& $\bm{-0.482\pm0.008}$ & $8.448\pm0.128$ & $6.628\pm0.149$
& $\underline{1.073\pm0.777}$ \\
T5 Local/global routes
& $\bm{2.7152\pm0.0001}$
& $2.8569\pm0.0010$
& $\underline{2.8293\pm0.0008}$
& $2.8403\pm0.0038$ \\
\midrule
Mean within-seed rank $\downarrow$ & 1.50 & 3.40 & 2.90 & 2.20 \\
\midrule
\multicolumn{5}{l}{\textbf{B. Strict structural success}}\\
T1: $\bar s\ge .20$ & 20/20 & 20/20 & 20/20 & 20/20 \\
T2: $\bar s\ge .25$ & 20/20 & 0/20 & 0/20 & 20/20 \\
T3: $\bar s=1$, $p_u,p_l\ge .20$ & 20/20 & 0/20 & 0/20 & 0/20 \\
T4: $\bar s\ge .45$, $p_u,p_l\ge .15$ & 20/20 & 0/20 & 0/20 & 14/20 \\
T5: $\bar s\ge .30$, $p_u\ge .90$
& 20/20 & 0/20 & 0/20 & 20/20 \\
\bottomrule
\end{tabular}
\end{table}

Figure~\ref{fig:trajectory-representative-scmo} displays the SCMO cloud whose
objective is closest to the median SCMO objective for each task, with ties
resolved deterministically.  Each panel plots 40 trajectories selected at
evenly spaced wall-crossing order statistics from the returned 128-particle
cloud.  In particular, the T3 and T4 panels show both routes without any
endpoint-preserving route proposal. 

\begin{figure}[htbp]
\centering
\includegraphics[width=\textwidth]{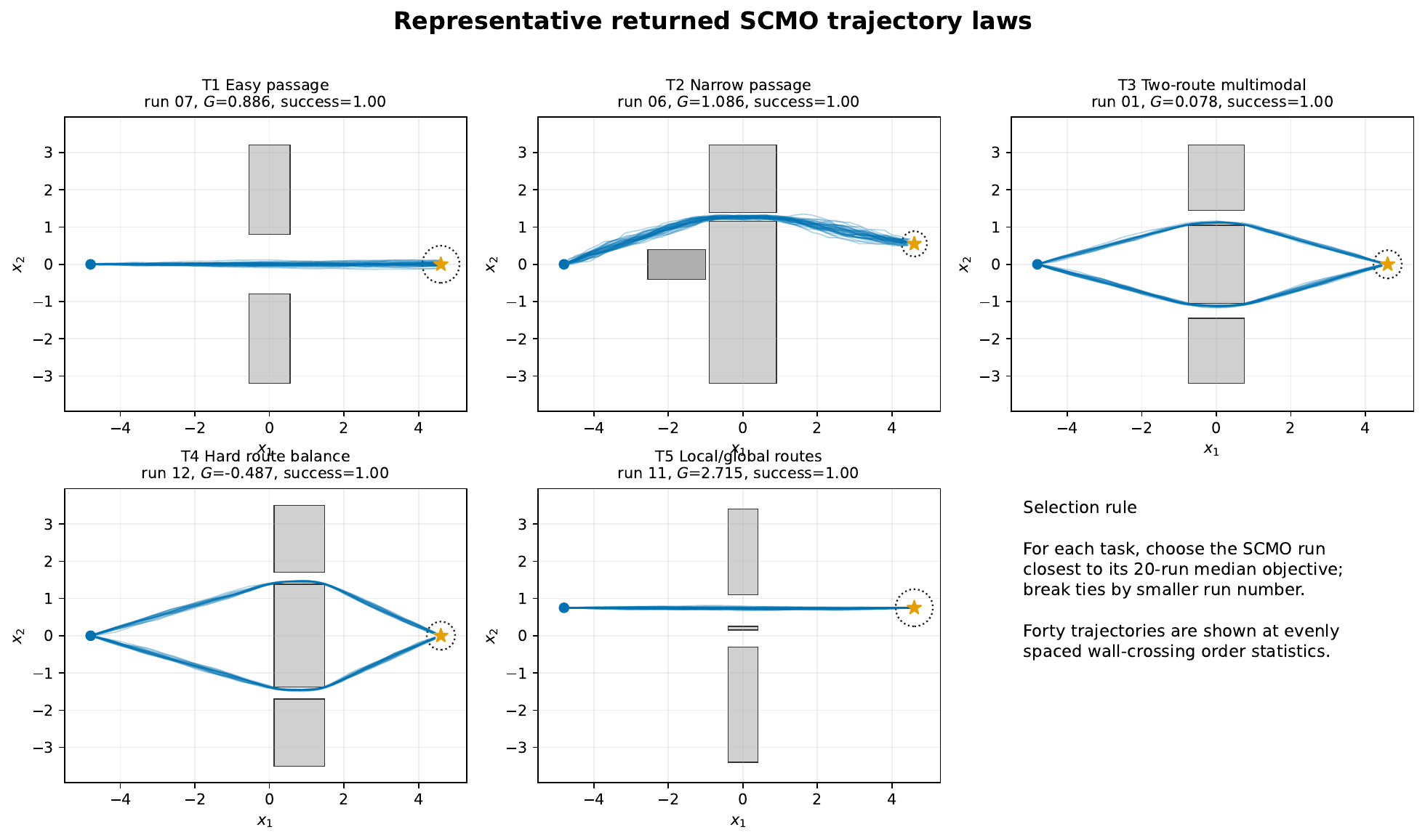}
\caption{Deterministically selected median-objective SCMO clouds for T1--T5.
Each panel shows 40 trajectories selected at evenly spaced wall-crossing order
statistics from the returned $N=128$ cloud, together with the wall, enabled
gaps, and goal.}
\label{fig:trajectory-representative-scmo}
\end{figure}

\subsection{Push-T as a Contact-Rich External Benchmark}
\label{sec:pusht-law-experiment}

\paragraph{Push-T problem setting.}
To complement the synthetic diagnostics, we evaluate on the contact-rich
Push-T manipulation environment used in Diffusion Policy
\citep{chi2023diffusionpolicy}.  In Push-T, a circular agent must push a
T-shaped block into a fixed T-shaped target region centered at \((256,256)\);
a rollout succeeds when the block covers more than \(95\%\) of the target
region.  We study fixed-state, open-loop optimization of a law over complete
pushing plans, rather than the canonical policy-learning protocol or a
learned-policy evaluation. For particle \(i\), reshape the optimization variable
\(z_i\in\mathbb R^{32}\) into 16 two-dimensional vectors
\(z_{i,0},\ldots,z_{i,15}\in\mathbb R^2\). At the equally spaced times $\tau_j=\frac{299j}{15},\, j=0,\ldots,15$, these vectors define bounded target-position knots
\[
q_{i,j}
=
5+502\,\operatorname{sigmoid}(z_{i,j})
\in[5,507]^2,
\qquad
\operatorname{sigmoid}(r)=\frac{1}{1+e^{-r}},
\]
where the sigmoid is applied coordinatewise. A shape-preserving piecewise
cubic Hermite interpolant is then fitted separately to the \(x\)- and
\(y\)-coordinates of the 16 knots and evaluated at
\(t=0,\ldots,299\). This produces the open-loop action sequence $a_i=(a_{i,0},\ldots,a_{i,299}),
\,
a_{i,t}\in[5,507]^2$, where \(a_{i,t}\) is the absolute target position sent to the circular pushing
agent at step \(t\). The interpolation does not overshoot the range of adjacent
knots, so every action remains within the environment bounds without
subsequent clipping. This mapping is fixed and contains no learned
parameters. The optimized empirical law is
\[
\mu_z^N=\frac1N\sum_{i=1}^N\delta_{z_i},
\qquad N=8,
\]
and therefore represents a distribution over eight open-loop pushing plans.

The simulator state consists of the center position
\(p_{\rm push}=(x_{\rm push},y_{\rm push})\) of the circular pusher and the
planar pose \((p_T,\theta_T)\) of the movable T-shaped block, where
\(p_T=(x_T,y_T)\) is the block center and \(\theta_T\) is its orientation in
radians. Coordinates are measured in pixels in the \(512\times512\) workspace.
We use the exactly serialized reset state
\[
\bigl(x_{\rm push},y_{\rm push},x_T,y_T,\theta_T\bigr)
=
(450,62,340,172,\pi/4).
\]
Thus, the pusher initially has center \((450,62)\), while the T-shaped block
has center \((340,172)\) and orientation \(\pi/4\), and goal center being $(256,256)$.  Five deterministic conditions are obtained by
scaling the agent and block positions about the goal by
$\gamma\in\{.85,.90,1.00,1.05,1.10\}$; the initial action center is scaled in
the same way while a frozen residual cloud is preserved.  These are controlled
symmetric perturbations of one state, not independent random task draws. Within each fixed condition, every
method receives the same serialized reset state and the same serialized
$N=8$ initial latent cloud.

\paragraph{Push-T objective.}
For a decoded action sequence $a=(a_0,\ldots,a_{299})$, let $r_{\max}(a)$ be
the maximum Push-T reward attained during its rollout and define
\begin{align*}
 \mathcal L(a)&=\sum_{t=1}^{299}\|a_t-a_{t-1}\|_2,\\
 \mathcal A(a)&=\sum_{t=2}^{299}
 \|a_t-2a_{t-1}+a_{t-2}\|_2.
\end{align*}
Let $I(a)$ indicate an invalid rollout summary.  The implemented particle loss
is
\begin{equation}
 \ell(a)=1-r_{\max}(a)
 +0.02\frac{\mathcal L(a)}{512}
 +0.01\frac{\mathcal A(a)}{512}
 +10I(a).
 \label{eq:pusht-particle-loss}
\end{equation}
Here \(I(a)=1\) if the rollout summary contains a non-finite reward or coverage, a non-finite or negative \(\mathcal L(a)\) or \(\mathcal A(a)\), or an unparseable success flag, and \(I(a)=0\) otherwise; invalid numerical fields are set to zero when computing \(\ell(a)\). We use the official \texttt{gym-pusht==0.1.6} success criterion. Goal
coverage is the fraction of the fixed T-shaped goal region overlapped by the
movable T-shaped block, and a rollout succeeds when this coverage strictly
exceeds \(0.95\). Each rollout ends upon success or after the 300-step horizon.
To classify the pushing route, we consider the first pusher--block contact
located more than 10 pixels to either side of the directed line from the
initial block center to the goal center. Contacts within 10 pixels of this
line are treated as ambiguous, and classification continues until a later
contact leaves this central band. A qualifying contact on the positive side
is labelled left, whereas one on the negative side is labelled right. Let \(s_i\in\{0,1\}\) indicate rollout success and let \(\rho_i\)
denote its left, right, or unclassified route label.
Define the unconditional masses by
\begin{align*}
 p_s&=\frac1N\sum_{i=1}^N s_i,\\
 p_L&=\frac1N\sum_{i=1}^N \mathbf 1\{s_i=1,\rho_i=L\},\\
 p_R&=\frac1N\sum_{i=1}^N \mathbf 1\{s_i=1,\rho_i=R\}.
\end{align*}
Failures contribute to neither route mass.  Successful rollouts whose contact
remains inside the dead zone count toward $p_s$ but toward neither $p_L$ nor
$p_R$.  The empirical-law objective is
\begin{align}
 G_{\rm PushT}(\mu_z^N)
 ={}&\frac1N\sum_{i=1}^N\ell(a_i)
 +(p_L-.5)^2+(p_R-.5)^2
 \nonumber\\
 &+2\bigl[(.25-p_L)_+^2+(.25-p_R)_+^2\bigr].
 \label{eq:pusht-cloud-objective}
\end{align}

\paragraph{Evaluation criteria.}
The primary metric is $G_{\rm PushT}$, and lower is better.  We additionally report successful mass,
the both-quota event $p_L,p_R\ge .25$, the both-route event
$p_L,p_R\ge .125$, and all-particle success $p_s=1$.  Because $N=8$, the
both-route threshold requires at least one successful trajectory on each
classified route.  Route balance and quotas are explicit terms of~\Eqref{eq:pusht-cloud-objective}; they are therefore target-aligned
diagnostics rather than independent downstream outcomes.

\paragraph{Baselines.}
We compare SCMO with four derivative-free whole-cloud baselines.  For every
baseline, one candidate is a complete \(N\times32\) latent cloud and is scored
using the same empirical-law objective \(G_{\rm PushT}\).  Greedy whole-cloud
random search~\citep{solis1981randomsearch} samples isotropic Gaussian
perturbations around the incumbent cloud and retains the best proposal only
when it improves the objective.  Whole-cloud CEM
\citep{deboer2005crossentropy} maintains a coordinatewise Gaussian proposal
over complete clouds and updates its mean and standard deviation using the
elite candidates.  Our MPPI-style baseline~\citep{williams2017mppi} samples
complete-cloud perturbations and updates the nominal cloud using their
Gibbs-weighted average; it is an open-loop empirical-law optimizer rather than
a canonical receding-horizon controller.  Full CMA-ES
\citep{hansen2001cma} treats the flattened cloud as one \(256\)-dimensional
individual and applies positive-weight full-covariance and cumulative
step-size adaptation.  Its population is fixed at ten for exact rollout-budget
matching rather than using the dimension-dependent default. Within every condition
and seed, all methods start from the same serialized cloud and return the
lowest-objective cloud visited.

\paragraph{Hyperparameters and budget accounting.}
The task-specific settings were selected for each method's best performance. Table~\ref{tab:pusht-hyperparameters} gives the frozen method parameters. 

\begin{table}[H]
\centering
\small
\setlength{\tabcolsep}{5.0pt}
\caption{Frozen Push-T optimizer parameters.  Every method performs 50 updates
and receives exactly 4,408 optimization simulator calls.}
\label{tab:pusht-hyperparameters}
\begin{tabular}{lll}
\toprule
Method & Population/update structure & Selected parameters \\
\midrule
SCMO & $R=2$, $S=4$, $M=5$, $L=10$
& proposal $.075$, execution $.01$, temperature $.3$, $T_{\rm alg}=1$ \\
CEM cloud & 10 complete clouds
& elite fraction $.3$, init. std. $.075$, floor $.01$, smoothing $.5$ \\
Random-search cloud & 11 complete clouds
& proposal scale $.04$ \\
MPPI-style cloud & 10 complete clouds
& proposal $.075$, step $.6$, temperature $.1$ \\
Full CMA-ES cloud & 10 complete clouds
& parent fraction $.3$, initial scale $.075$, eigenvalue floor $10^{-12}$ \\
\bottomrule
\end{tabular}
\end{table}

The budget is exact at the simulator-call level.  The initial cloud costs eight
rollouts.  Each of SCMO's $LM=50$ updates costs
$N\{R+RS+1\}=8(2+8+1)=88$ rollouts, so its total is
$8+50\times88=4{,}408$.  CEM, MPPI, and full-covariance CMA-ES each evaluate
ten complete clouds plus one monitor cloud per update, also costing
$8(10+1)=88$ rollouts.  Random search evaluates 11 complete clouds, again
costing $11\times8=88$. No early stopping is used, and every
method returns its best visited eligible incumbent.

\paragraph{Results.}
The results are summarized in Table~\ref{tab:pusht-results}.  SCMO has the
lowest selected incumbent objective and the highest successful mass.  Its
seed-macro objective is
$0.0207\pm0.0036$, compared with $0.0619\pm0.0141$ for the strongest baseline,
CEM; the SCMO-minus-CEM difference is $-0.0412$ with a $95\%$ paired-\(t\) interval
$[-0.0512,-0.0312]$.  SCMO also has the lowest mean selected objective in each
of the five fixed conditions.  Both SCMO and CEM meet both route quotas in all
50 state--seed runs and in all five conditions for every one of the ten seeds.
SCMO makes all eight particles successful in 36/50 runs, compared with 27/50
for CEM, while Random Search, MPPI, and full CMA-ES do so in none. Figure~\ref{fig:pusht-representative-rollouts} replays
all eight selected particles for every method at the unit-scale condition.

\begin{table}[H]
\centering
\footnotesize
\setlength{\tabcolsep}{4.4pt}
\caption{Push-T final evaluation.  Continuous entries are mean $\pm$ sample
standard deviation over ten seed-level macro-averages across the five fixed
conditions.  Event counts pool the 50 state--seed runs.  Lower
objective and higher successful mass are better; bold and underline mark the
best and second-best continuous means.}
\label{tab:pusht-results}
\begin{tabular}{lrrrrrr}
\toprule
Method & Objective $\downarrow$ & Success mass $\uparrow$
& Both routes & Both quotas & All $8$ succeed \\
\midrule
SCMO
& $\bm{0.021\pm0.004}$ & $\bm{0.965\pm0.027}$ & 50/50 & 50/50 & 36/50 \\
CEM cloud
& $\underline{0.062\pm0.014}$ & $\underline{0.920\pm0.059}$ & 50/50 & 50/50 & 27/50 \\
Random-search cloud
& $0.172\pm0.043$ & $0.585\pm0.058$ & 50/50 & 45/50 & 0/50 \\
MPPI-style cloud
& $0.227\pm0.026$ & $0.520\pm0.047$ & 50/50 & 28/50 & 0/50 \\
Full CMA-ES cloud
& $0.318\pm0.038$ & $0.420\pm0.037$ & 49/50 & 6/50 & 0/50 \\
\bottomrule
\end{tabular}
\end{table} 
\begin{figure}[H]
\centering
\includegraphics[width=\textwidth]{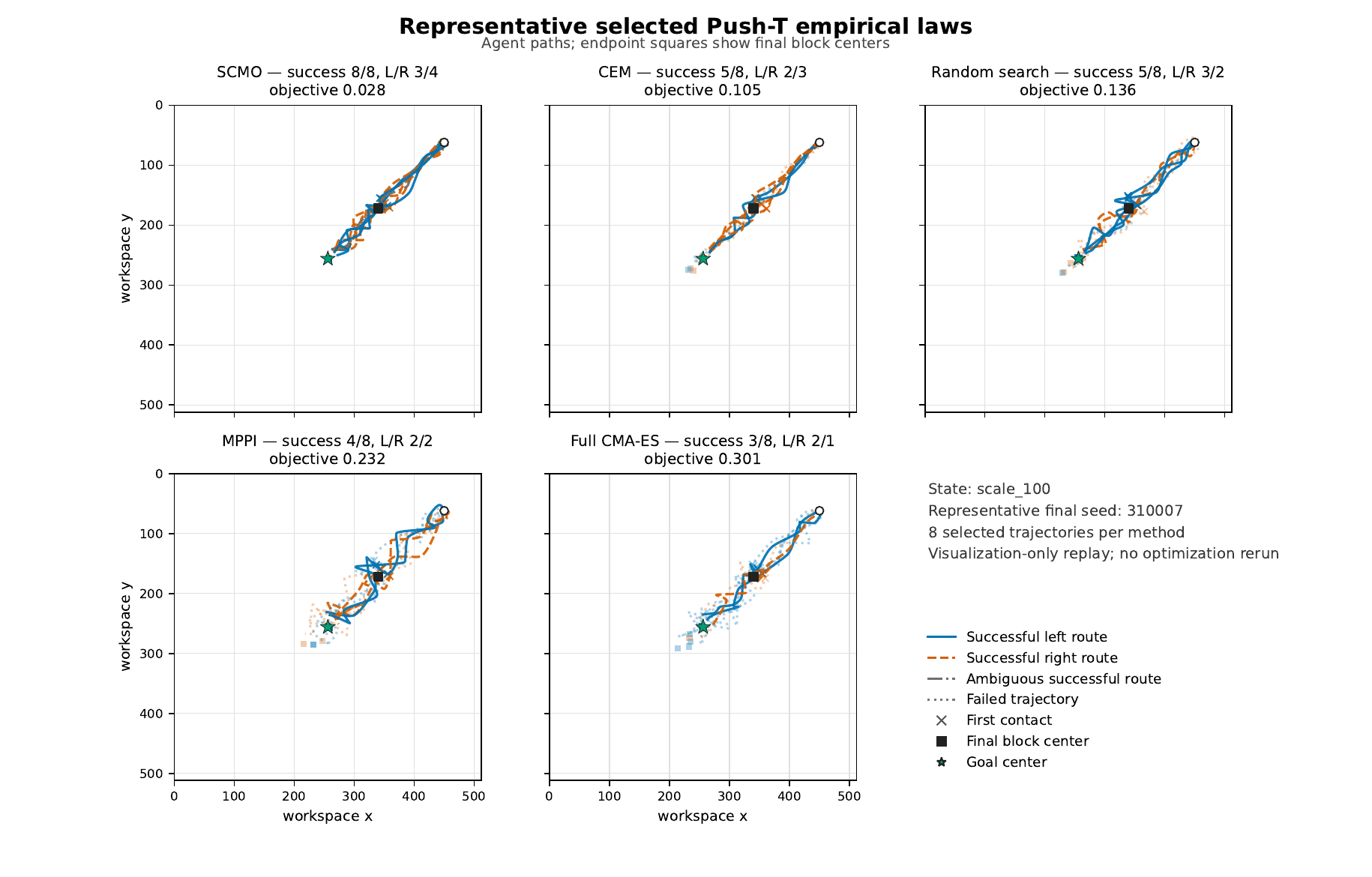}
\caption{Visualization-only replays of the eight selected particles for each
method under the deterministically selected representative seed 310007 at
scale $1.00$.}
\label{fig:pusht-representative-rollouts}
\end{figure}
\section{Additional illustration: latent distribution matching}
\label{app:latent-matching}

We include an additional learned-representation experiment to illustrate that SCMO can optimize an
empirical law in a latent space. This is not intended as a generative-modeling benchmark. We use a
pretrained convolutional VAE on MNIST and keep both the encoder and decoder fixed. Training
images are encoded into \(d=16\)-dimensional latent vectors, and the resulting empirical latent law is
used as the target \(\nu_{\mathrm{MNIST}}\). SCMO evolves $\mu_t^N=\frac1N\sum_{i=1}^N\delta_{z_t^i}$ by minimizing $G(\mu_t^N)=\operatorname{MMD}^2(\mu_t^N,\nu_{\mathrm{MNIST}})$ with a Gaussian kernel. The decoder is used only for visualization after the latent particles have
been optimized.

For the direct SCMO run, we use \(12000\) target latents obtained by encoding MNIST training images, kernel bandwidth \(1.0\), \(N=64\), \(R=1\), \(S=8\), \(M=64\), \(L=8\), horizon \(T=1\), and \(\varepsilon=10^{-8}\). The latent covariance is
diagonal with decay parameter \(0.12\), proposal scale \(\sigma_{\mathrm{prop}}=1.0\), and execution scale \(\sigma_{\mathrm{dyn}}=0.08\).

We also test whether the expensive candidate-evaluation step can be amortized. We collect \(32{,}000\) cloud transitions from \(100\) independent
teacher SCMO rollouts, each using \(N=128\), \(R=1\), \(S=128\),
\(M=64\), and \(L=5\). For each step, we
store the time feature $\phi_k=\left(\frac{\tau_k}{T},\frac{\ell}{L-1}\right)$, the current cloud \(Z_k=(z_k^1,\ldots,z_k^N)\), and the SCMO displacement $D_k=(d_k^1,\ldots,d_k^N),
    \qquad
    d_k^i=\Delta t\,\widehat\theta_k^i$. The amortized model is a DeepSets-type permutation-equivariant network. It computes a set summary $h(Z)=\frac1N\sum_{j=1}^N\psi(z^j)$, and predicts the displacement of particle \(i\) by $\widehat d_\eta^i
    =
    \rho_\eta\!\left(\phi_k,z^i,h(Z),\overline z,\operatorname{Var}(Z)\right)$. The model is trained to imitate the law-level SCMO transition using
\[
\mathcal L(\eta)
=
\lambda_{\mathrm{law}}\,
\operatorname{MMD}^2\!\left(
\frac1N\sum_{i=1}^N\delta_{z_k^i+\widehat d_\eta^i},
\frac1N\sum_{i=1}^N\delta_{z_k^i+d_k^i}
\right)
+
\lambda_{\mathrm{disp}}\,
\frac1N\sum_{i=1}^N
\|\widehat d_\eta^i-d_k^i\|^2,
\]
with \(\lambda_{\mathrm{law}}=1.0\), \(\lambda_{\mathrm{disp}}=0.5\), and MMD bandwidth \(1.0\). The
network is trained for \(150\) epochs with Adam, learning rate \(10^{-3}\), and batch size \(128\).

At inference time, the learned network replaces the SCMO candidate pool and replacement-score
evaluation. Starting from a fresh Gaussian latent cloud, we iterate the learned measure-flow map and
then decode the final latent particles. Figure~\ref{fig:mnist_amortized_scmo} shows decoded
samples from the true SCMO teacher and the amortized network. Both rows contain recognizable
samples across all ten digit classes, suggesting that the learned permutation-equivariant map captures
the law-level SCMO evolution well enough to preserve broad latent-mode coverage.
\begin{figure}[H]
    \centering
    \includegraphics[width=0.88\linewidth]{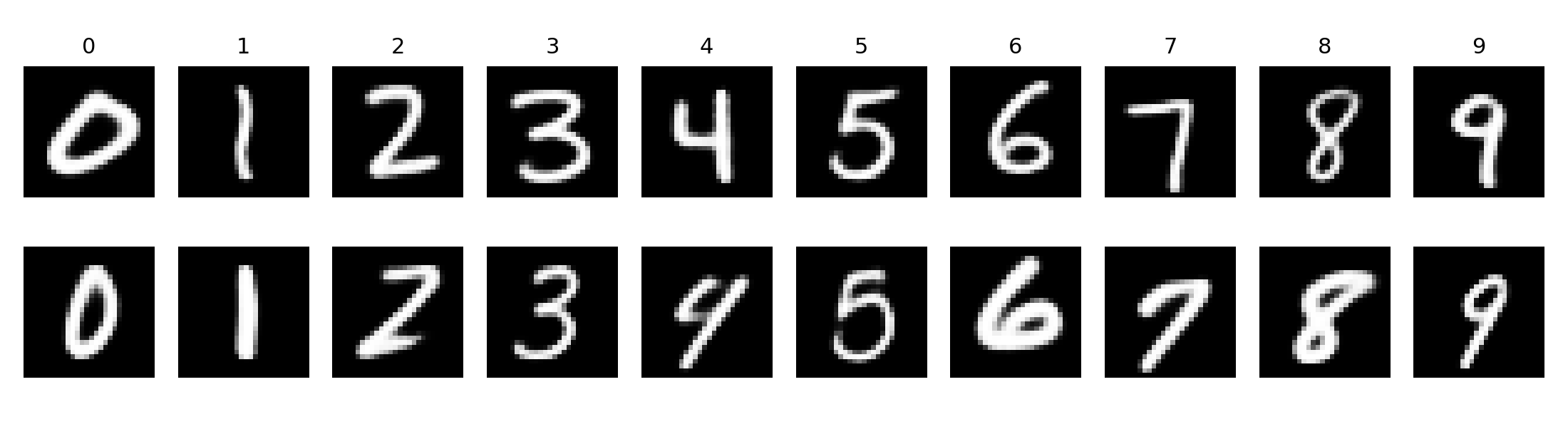}
    \caption{
    Additional latent distribution-matching illustration on MNIST. A pretrained VAE is kept fixed,
    and SCMO optimizes an empirical law over \(16\)-dimensional latent vectors toward the encoded
    MNIST latent distribution using an MMD objective. The first row shows decoded samples from
    a true SCMO rollout. The second row shows decoded samples from the learned DeepSets
    measure-flow network, which imitates SCMO cloud transitions and avoids candidate replacement
    scoring at inference time. Columns are grouped by classifier-predicted digit class
    \(0,\ldots,9\); the classifier is used only for visualization.
    }
    \label{fig:mnist_amortized_scmo}
\end{figure}
\end{document}